\subjclass[2010]{ 37C30, 37D20,  37C40,  30H25, 37D35, 37A05, 37A25, 37A30, 37A50, 37E05, 47A35, 47B65, 60F05, 	60F17, 42B35, 42B35,42C15} 
\keywords{ transfer operator, atomic decomposition, Besov space,  Ruelle,  Perron-Frobenious,  quasi-compact, Lasota-Yorke, decay of correlations, expanding map, decay of correlations, ergodic theory, central limit theorem, almost sure invariance principle}
\thanks{A.A. was partially supported by  CNPq,  CAPES e PRONEX FAPERJ. D.S. was partially supported by CNPq 306622/2019-0, CNPq 307617/2016-5, CNPq Universal 430351/2018-6 and FAPESP Projeto Tem\'atico 2017/06463-3.}
\title{Transfer operators and  atomic decomposition }
\author[A. Arbieto and D. Smania]{Alexander Arbieto and Daniel Smania}
\address{Instituto de Matem\'atica, Universidade Federal do Rio de Janeiro, P. O. Box 68530, 21945-970 Rio de Janeiro, Brazil.}
\address{Departamento de Matem\'atica, Instituto de Ci\^encias Matem\'aticas e de Computa\c{c}\~ao-Universidade de S\~ao Paulo (ICMC/USP), Caixa Postal 668, S\~ao Carlos-SP, Brazil.}
\email{arbieto@im.ufrj.br}
\email{smania@icmc.usp.br}
\urladdr{\url{https://sites.icmc.usp.br/smania/}}
\newtheorem{theorem}{Theorem}[section]
\newtheorem{corollary}{Corollary}[section]
\newtheorem{lemma}[theorem]{Lemma}
\newtheorem{proposition}[theorem]{Proposition}
\theoremstyle{definition}
\newtheorem{definition}[theorem]{Definition}
\newcommand{\secdot}[1]{\arabic{#1}}
\newcommand{\Cll}[2][normal]{%
\global\@namedef{cstt@#1@#2}{}%
\@ifundefined{cstt@name@#2} {\@ifundefined{cstt@namec@#2}{ { \Cl[#1]{#2}}}{ {\lambda_{_{#2}}}}}{{C_{_{#2}}}} %
}
\newcommand{\Crr}[1]{\@ifundefined{cstt@name@#1}{   \@ifundefined{cstt@namec@#1}{\Cr{#1}}{\lambda_{_{#1}}}}{C_{_{#1}}}}
\newcounter{change}
\providecommand\@dotsep{5}
\renewcommand{\listoftodos}[1][\@todonotes@todolistname]{%
  \@starttoc{tdo}{#1}}
\begin{document}

\begin{abstract} 
We use  the method of atomic decomposition and a new family of Banach spaces to  study the action of  transfer operators associated with  piecewise-defined  maps. It turns out that these transfer operators  are quasi-compact even when  the associated potential, the dynamics and  the underlying phase  space have very low regularity. 

 In particular,  it is often possible to obtain exponential decay of correlations, the Central Limit Theorem and almost sure invariance principle for fairly general observables, including unbounded ones. 
\end{abstract}

\maketitle

\setcounter{tocdepth}{2}
\tableofcontents

\vspace{1cm}
\centerline{ \bf I. INTRODUCTION.}
\addcontentsline{toc}{chapter}{\bf I. INTRODUCTION.}
\vspace{1cm}

Transfer operators are an almost unavoidable tool to study the ergodic theory of (piecewise) smooth dynamical systems. In the context of expanding maps, we usually have a reference measure, that can be for instance  either  the volume form  on the manifold where the dynamics $F$ takes place, or in more general settings some "eigen-measure"  $m$  for the dual operator (to find such eigen-measure quite often it is not a trivial matter).  The transfer  operator describes how finite measures that  are absolutely  continuous with respect to $m$ are transported by the dynamics. That is,  if  $\mu = \rho m$, with $\rho \in L^1(m)$  then $\Phi \rho$ is the density of the push-forward $F^\star\mu$ with respect to $m$. \\

\noindent \textit{ We consider  new  \textbf{Besov spaces} on  phase spaces with  very mild   structure, a finite \textbf{measure space with a good grid}, and 
we   estimate the (essential) spectral radius of  \textbf{transfer operators of  piecewise-defined maps} acting  on them.   These spaces  often  coincide with classical Besov spaces in more familiar settings. On the other hand, the assumptions on the regularity of  both the map and the phase space are minimal, allowing us to apply the results to new and classical situations alike. We use the  \textbf{atomic decompositon}  of these Besov  spaces to study the action of the transfer operator. } \\

\section{Transfer operators and dynamics} We know that even for very regular expanding maps, typical $L^1$ observables {\it do not have} good statistical properties as exponential decay  of correlations and central limit theorem. Indeed this is  related to the bad spectral behaviour  of the action on $\Phi$ on $L^1(m)$. The transfer operator acts as a bounded operator on $L^1(m)$, but  its spectrum there is {\it  the whole closed unit disc.} This  obviously  have dreadful consequences for the decay of correlations of typical integrable observables.\\

The most well-behaved linear operators are linear transformations on finite-dimensional normed spaces. Its spectrum is just a finite number of eigenvalues with finite-dimensional eigenspaces.  The next best thing would be compact operators, for which the spectrum are just a countable number of eigenvalues, possibly accumulating at zero.  Unfortunately, the transfer operator is  very rarely a compact operator  even in very regular situations. A far more  successful approach   to obtain  good statistical properties of observables in some Banach space of functions $B$, often called {\it functional operator approach},  is to show the {\it quasi-compactness}  of the action of $\Phi$  on $B$, that is, the spectrum near the circle of espectral  radius is as of a compact operator, consisting on  isolated eigenvalues and finite-dimensional eigenspaces, and the "weird  stuff", the so-called essential spectrum, safely away from it, inside a disc of strictly smaller radius. \\

One must note that the quasi-compactness of $\Phi$ is not the only difficulty  in the functional operator approach, however, it is fair to say that finding a proper Banach space of functions and  proving the quasi-compactness of $\Phi$ there it is one of the most challenging steps.  There are well-known methods for using the quasi-compactness property to study the ergodic behaviour of $F$. We list a (purposely vague) description of some of them below. Section \ref{lyorke} and Section \ref{lyorke2}, give precise statements of some consequences of the quasi-compactness of transfer operators action on Besov spaces on measure spaces with a good grid. \\

\noindent {\it Existence of absolutely continuous invariant probabilities.} To this end, one needs to show that $1$ is an eigenvalue of $\Phi$ and its eigenspace contains a non-negative function $\rho$.  Then the measure $\rho \ dm$ is a finite  invariant  measure. One can also estimate the number of absolutely continuous ergodic measures by the dimension of this eigenspace if $\Phi$ also satisfies  the quite handy  Lasota-Yorke inequality  for the pair of Banach spaces $(B,L^1(m))$.\\

\noindent {\it Exponential decay of correlations.}  One needs to show that $1$ is an isolated simple eigenvalue and that the rest of the spectrum is contained in a ball centered at zero and with radius strictly  smaller than one.  Exponential decay of correlation follows for all observables in $B$.\\

\noindent {\it Central Limit Theorem.} To show the Central Limit Theorem for a real-valued observable $\phi$ we first show that $\psi \mapsto  M_t(\psi)=e^{i t \phi}\psi$  is a bounded operator (a {\it multiplier}) on $B$, for every $t$ small. Then  we  consider perturbations of  the transfer operator $\Phi_t= \Phi\circ M_t$. Often there is an analytic continuation of the  leading eigenvalue for every small $t$. This is closely related to the characteristic function of the observable  $\phi$, and a careful analysis gives the Central Limit Theorem for $\phi$. Note that $\Phi_t$ is also a transfer operator, but with a complex-valued potential.\\

\noindent {\it Analyticity of topological pressure}. If $I$ is a compact metric space and $F\colon I \rightarrow I$ is continuous  then the spectral radius of the operator $\Phi_g$ with potential $g$ is exactly $e^{P_{top}(g)}$. If there is just a single  element of the spectrum with maximal modulus, which is a simple eigenvalue,  this  eigenvalue often varies analytically under perturbation of $g$, so  we get the real analyticity of $P_{top}(g)$ with respect to $g$.  \\

\section{Looking for Banach spaces} 

There is a long history of looking for Banach spaces of observables with good statistical properties. We give below  a list of  dynamics, potentials  and corresponding function spaces where the quasi-compactness of the transfer operator was attained.  \\

The transfer operator (for the potential $\log  f'$) appeared in  1956 in  Rechard \cite{rechard} as a tool to find invariant measures of one-dimensional many-to-one transformations. But  its  impact  certainly had a little help of the popular book by Ulam \cite{ulam1}, where he asks if one could show a  result similar to Perron-Frobenious Theorem for positive matrices.  \\

Results on the spectral theory  of the transfer operator (as named  by Ruelle  but often called Ruelle-Perron-Frobenious operator) started with the seminal work on rigorous statistical mechanics  by  Ruelle \cite{ruelle1d}, who studied the one-sided shift and the action on H\"older function of the transfer operator associated with potentials in the same class, and, in particular, got a result analogous to the   Perron-Frobenious Theorem in this setting. \\

The construction of Markov partitions for hyperbolic maps by Sinai \cite{markovp} allowed to study transfer operators for expanding maps on manifolds \cite{tf}  and compact sets with the same Banach spaces of functions, since they have Markov partitions that semiconjugate them with subshifts of finite type.  See also Ruelle \cite{rjulia}, Parry \cite{parry},  Walters \cite{waltersgm}, Bowen \cite{qs}, Bowen and Series \cite{series}.  See  Bowen \cite{bowen}, Parry and Pollicott \cite{pp}, Przytycki  and Urba\'{n}ski \cite{pu}, Zinsmeister \cite{zin}, as well  Craizer \cite{craizer}   for    superb expositions on this setting, with different emphasis in its applications.  \\

Lasota and Yorke \cite{ly} gave the next step. They considered piecewise $C^2$ expanding maps on the interval, motivated by a quite concrete problem involving the shape of  well drilling bits. Of course, any space of continuous functions is not invariant by the transfer operator anymore. Moreover, the Markov partition approach is no longer easily adaptable here once one needs subshifts that are not of finite type. They proved the action of the transfer operator on the space $BV$ of bounded variation function satisfies what is now called the Lasota-Yorke inequality, which  in particular implies the quasicompactness of the action of the transfer operator. With the exception of early results by  Gel$'$fond \cite{gelfond} and  Parry \cite{parry2} on $\beta$-transformations, and Lasota \cite{lasota}, this was the first time  one could obtain  deep ergodic results for  non-markovian maps.  Keller and Hofbauer  \cite{hk1}\cite{hk2} pushed these results for bounded variation potentials, particularly the quasi-compactness of the transfer operator and its consequences.  Baladi \cite{baladi1} and Broise \cite{broise} are good introductions for these results.  \\ 

 Lasota-Yorke  inequality and quasi-compactness became  favorite tools to study transfer operators.  Keller \cite{kt} studied piecewise complex-analytic expanding maps on the plane. The space  $BV$ was used in higher dimensions to study piecewise expanding maps by  G\'ora  and Boyarsky \cite{MR1029902}.  See also  Adl-Zarabi \cite{ad}. Cowieson \cite{cowieson}\cite{cowieson1} proved the quasi-compactness of the transfer operator for "generic" piecewise $C^k$ expanding maps.  Indeed  in dimension larger than one, the discontinuities  of the dynamics became an even more serious liability. If you pick a piecewise monotone map on the interval  whose branches are defined in intervals, its $n$th iteration has  monotone  branches with the very same property. However, if we iterate a map that is a piecewise expanding map whose domains of the branches are very nice (squares, for instance) then its $n$th iteration may be a piecewise expanding map with branches defined in domains with  increasingly more complex geometry and  the associated  partition may have increasingly complex topology. Consequently,  nearly all these  results depend either  on a priori estimates or hold only for generic maps.  The only exceptions are the results by Buzzi \cite{MR1764923} and   Tsujii \cite{tbv} on  the quasi-compactness of the transfer operator for general piecewise real analytics maps defined in branches with domains whose boundary are piecewise analytic curves. Tsujii used the BV space, while Buzzi used the space introduced by Keller and  Saussol result described below.  There are also results for piecewise affine maps in the plane by Buzzi \cite{affineb} (see also Buzzi \cite{buzzi2}) and   for arbitrary dimension by Tsujji  \cite{tad}. We note that there are examples by Tsujii \cite{tsujiic} and Buzzi \cite{buzzic}  of $C^r$-piecewise expanding maps on $\mathbb{R}^n$ without an absolutely continuous invariant probability. \\

In the late 70's strange attractors attracted the interest of the mathematical community. In particular, Lorenz's attractor poses new problems to ergodic  theory of expanding maps since one can reduce many problems on the dynamics of the Lorenz's flow to the study of a one-dimensional expanding map. However, this map is non-markovian, and it has singularities on which  the derivative blow-up, so the previous function spaces did not work anymore. \\

Keller \cite{keller}  introduced a new space, the spaces of generalized $p$-bounded variation function, that allows him  to get the quasi-compactness of the transfer operator with $p$-bounded variation potential $\log F'$ for one-dimensional maps, including Lorenz maps and piecewise  $C^{1+\alpha}$-maps. This sparked an intense interest in using the same space  to higher-dimensional setting, especially given the difficulty of dealing with BV space in this setting.  Saussol \cite{saussol} result for piecewise H\"older potentials in higher dimensions, that depends on an a priori estimate, was applied by Buzzi \cite{MR1764923} in his result on piecewise real analytics maps in the plane. \\

Note that  generalized $p$-bounded variation function spaces seems to be {\it ad hoc} spaces. Moreover, this  space is also in $L^\infty$, that it is a constraint given that unbounded observables may be handy sometimes.  One may ask if we can get a larger and more familiar space to work with. Indeed Thomine \cite{thomine} obtained a result for Sobolev spaces $H^s_p$, with $0< s < 1/p$ in the  case of $C^r$ piecewise expanding multimodal maps on manifolds (as usual in this setting, the map needs to satisfy an a priori estimate). \\

There are also recent  results for one-dimensional expanding maps by Butterley \cite{butter} and  Liverani \cite{liverani} using some spaces  of functions. The Liverani's  space, in particular, is related to methods to study the transfer operator of hyperbolic maps acting on certain anisotropic Banach spaces. \\

Nakano and Sakamoto \cite{ns} recently obtained the quasi-compactness of the transfer operator  for smooth expanding maps on manifolds without  discontinuities acting on Besov spaces. \\

See also Baladi and Holschneider \cite{bh} for an earlier application of wavelets and multiresolution analysis  in the study of transfer operators of smooth expanding maps on manifolds.\\

If we move away from the functional analytic approach, Eslami \cite{eslami} studied  the decay of correlations for expanding maps on metric spaces. The class of observables under consideration is indeed a cone (rather than a linear subspace) of functions, inspired by the standard pairs  developed by Dolgopyat and Chernov (see for instance   \cite{dolgo1} \cite{dolgo2}). \\

We finish this historic account saying that the development  of the functional analytic approach  for hyperbolic maps (for instance, Anosov diffeomorphisms) has been very intense in the last years, with many exchanges of ideas with results on expanding maps. A fair description of these new developments is  beyond the scope of this work. We refer the reader  to the works of  Blank, Keller and  Liverani \cite{bkl}, Gou\"{e}zel and Liverani \cite{gl},  Baladi and Tsujii \cite{bt}, as well the survey  and the recent book by Baladi \cite{bsur} \cite{baladi2} and the references therein for  more information.

\section{Who needs  yet another  Banach space?}

We offer an {\bf "one-fits-all" approach.}  The Besov spaces on measure spaces with good grids considered here include many of the Banach spaces of functions considered in the literature on transfer operators. In particular Keller's spaces of generalised $p$-bounded variation and Sobolev spaces. Moreover one can cover most dynamics already considered before, as Lorenz 1-dimensional maps and  piecewise $C^{1+\alpha}$ expanding maps,  giving  new statistical results for a {\bf wider class of observables}, including unbounded ones.  We can consider Besov spaces (in particular Sobolev spaces) in many settings \cite{smania-homo}, in particular, homogeneous spaces (a quasi-metric space with a doubling measure), as  for instance  symbolic spaces and hyperbolic Julia sets with an appropriated  reference measure. But it also allows us to {\bf deal with new situations}, as maps with potentials in Besov spaces.  In the companion paper \cite{smania-examples}, we give a long list of applications. Finally, this is a  {\bf  elementary approach.}  Besov spaces on measure spaces with good grids have a fairly elementary definition \cite{smania-besov} and it demands simple methods. In particular, the atomic decomposition by  atoms with discontinuities  is embraced from the very beginning, so  we do not need to deal with mollifiers, which makes the proofs more transparent and straightforward. \\ \\


\vspace{1cm}
\centerline{ \bf II. PRELIMINARIES.}
\addcontentsline{toc}{chapter}{\bf II. PRELIMINARIES.}
\vspace{1cm}

\section{Measure space and good grids}\label{partition}   \label{goodgrids}
 Let $I$  be a measure space with a $\sigma$-algebra $\mathbb{A}$ and $m$ be a  measure on $(I,\mathbb{A})$, $m(I)=1$.  We will consider two measure spaces $(I,m)$ and $(J,\mu)$ along this paper, but often we will use $|A|$ for either  $m(A)$ or $\mu(A)$, since the measure under consideration will be clear from the context. 
 
 A {\bf grid}  is a sequence of finite  families of measurable sets with positive measure  $\mathcal{P}= (\mathcal{P}^k)_{k\in \mathbb{N}}$, so that at least one of these families is non empty and
\begin{itemize}
\item[G1.] Given $Q\in \mathcal{P}^k$, let
$$\Omega_Q^k=\{ P \in \mathcal{P}^k\colon \ P\cap Q\neq \emptyset \}.$$
Then
$$\Cll{mult1}=\sup_k \sup_{Q \in \mathcal{P}^k} \# \Omega_Q^k< \infty.$$
\end{itemize}

  Define  $||\mathcal{P}^k||=\sup \{|Q|\colon Q \in \mathcal{P}^k\}$.

A $(\Cll[namec]{G1},\Cll[namec]{G2})$-{\bf good  grid} , with $0< \Crr{G1}<  \Crr{G2} <  1$, is a grid  $\mathcal{P}= (\mathcal{P}^k)_{k\in \mathbb{N}}$  with the following properties:

\begin{itemize}
\item[G2.] We have $\mathcal{P}^0=\{I\}$.
\item[G3.] We have $I=\cup_{Q \in \mathcal{P}^k} Q$  (up to a set of zero $m$-measure).
\item[G4.] The elements of the  family $\{  Q \}_{Q\in \mathcal{P}^k} $ are pairwise disjoint. 
\item[G5.] For every $Q\in \mathcal{P}^k$ and $k > 0$ there exists $P \in \mathcal{P}^{k-1}$ such that $Q\subset P$.  
\item[G6.] We have
$$\Crr{G1} \leq \frac{|Q|}{|P|} \leq \Crr{G2}$$
for every $Q\subset P$ satisfying  $Q\in \mathcal{P}^{k+1}$ and $P\in \mathcal{P}^{k}$ for some $k\geq 0$. 
\item[G7.]  The family $\cup_k \mathcal{P}^k$ generate the  $\sigma$-algebra $\mathbb{A}$. 
\end{itemize}

We will  often abuse notation replacing $Q\in \cup_k\mathcal{P}^k$ by $Q\in \mathcal{P}$. For every set  $\Omega$, let 
$$k_0(\Omega,\mathcal{P})=\min \{k\geq 0 \colon \ \exists P \in \mathcal{P}^k \ s.t. \ P \subset \Omega     \} $$
whenever the set in the r.h.s. is a nonempty set. We will use the simpler notation $k_0(\Omega)$ if the grid under consideration is obvious.  Note that $k_0(W)=i$ for every $W \in \mathcal{P}^i$.

\section{Spaces defined by Souza's atoms} \label{space} Let $p \in [1,\infty]$ , $q \in [1,\infty)$ and $s \in (0,1)$ satisfying
$$0< s\leq \frac{1}{p}.$$
Let $\mathcal{P}=(\mathcal{P}^k)_{k\geq 0}$ be a good nested family of partitions. For every $Q\in \mathcal{P}$ let  $a_Q$ be   the function defined by $a_Q(x)=0$ for every $x \not\in Q$ and 
$$a_Q(x)=|Q|^{s-1/p}$$
for every $x \in Q$. The function $a_Q$ will be called a Souza's canonical atom on $Q$.  Let $\mathcal{B}^s_{p,q}$  be the space of all complex valued functions $f \in L^p$ that  can  be represented by an absolutely convergent series on $L^p$
\begin{equation} \label{rep55} f = \sum_{k=0}^{\infty} \sum_{Q \in \mathcal{P}^k}    s_Q a_Q\end{equation}
where $s_Q \in \mathbb{C}$ and additionally 
\begin{equation} \label{rep2} \big( \sum_{k=0}^{\infty} (\sum_{Q \in \mathcal{P}^k}    |s_Q|^p)^{q/p} \big)^{1/q} < \infty.\end{equation}
By absolutely convergence in $L^p$ we mean that 
\begin{equation} \sum_{k=0}^{\infty} \big| \sum_{Q \in \mathcal{P}^k}    s_Q a_Q  \big|_p < \infty. \end{equation} 
The r.h.s. of (\ref{rep55}) is called a $\mathcal{B}^{s}_{p,q}$-representation of $f$. Define
\begin{equation} \label{norm} |f|_{\mathcal{B}^s_{p,q}} = \inf  \big( \sum_{k=0}^{\infty} (\sum_{Q \in \mathcal{P}_k}     |s_Q|^p )^{q/p} \big)^{1/q} ,\end{equation} 
where the infimum runs over all possible representations of $f$ as in (\ref{rep55}).  We say that $f\in \mathcal{B}^s_{p,q}$ is  $\mathcal{B}^s_{p,q}$-positive if there is a $\mathcal{B}^{s}_{p,q}$-representation of $f$ such that $s_Q\geq 0$ for every $Q\in \mathcal{P}$.  The following results were proven in S. \cite{smania-besov}. We collect them here for the convenience of the reader. 

\begin{proposition}  The normed space $(\mathcal{B}^s_{p,q},|\cdot|_{\mathcal{B}^s_{p,q}})$ is  a complex Banach space  and its unit ball is compact in $L^p.$ 
\end{proposition}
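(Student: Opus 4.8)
The plan is to first isolate the quantitative fact that drives everything: by axioms G2, G5 and G6 every $Q\in\mathcal{P}^k$ with $k\ge1$ is contained in some $P\in\mathcal{P}^{k-1}$ with $|Q|\le\Crr{G2}|P|$, so an induction starting from $\|\mathcal{P}^0\|=|I|=1$ gives $\|\mathcal{P}^k\|\le\Crr{G2}^{k}$. Since Souza's atoms at level $k$ have pairwise disjoint supports (axiom G4) and $|a_Q|_p=|Q|^{s}$, any finitely supported coefficients $(s_Q)_{Q\in\mathcal{P}^k}$ satisfy $\big|\sum_{Q\in\mathcal{P}^k}s_Qa_Q\big|_p\le\Crr{G2}^{ks}\big(\sum_{Q\in\mathcal{P}^k}|s_Q|^p\big)^{1/p}$. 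Because the level-$k$ $\ell^p$-norm of the coefficients is dominated by the full $\ell^q(\ell^p)$-norm and $C_0:=\sum_{k\ge0}\Crr{G2}^{ks}<\infty$ (as $\Crr{G2}<1$), summing over $k$ shows that every $\mathcal{B}^{s}_{p,q}$-representation $f=\sum_{k,Q}s_Qa_Q$ obeys $|f|_p\le C_0\big(\sum_k(\sum_{Q\in\mathcal{P}^k}|s_Q|^p)^{q/p}\big)^{1/q}$, so taking the infimum over representations yields a continuous embedding $\mathcal{B}^{s}_{p,q}\hookrightarrow L^p$. This embedding gives positive-definiteness of $|\cdot|_{\mathcal{B}^{s}_{p,q}}$; homogeneity and the triangle inequality are then routine, the former by scaling a representation and the latter by adding representations of $f$ and $g$ (the resulting series still converges absolutely in $L^p$ and to $f+g$, thanks to the embedding estimate) and applying Minkowski's inequality to the coefficient sequences in $\ell^q(\ell^p)$.

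To prove completeness I would check that every absolutely convergent series converges in $\mathcal{B}^{s}_{p,q}$. Given $(f_n)_n$ with $\sum_n|f_n|_{\mathcal{B}^{s}_{p,q}}<\infty$, fix representations $f_n=\sum_{k,Q}s_Q^{(n)}a_Q$ whose coefficient sequences have $\ell^q(\ell^p)$-norm at most $|f_n|_{\mathcal{B}^{s}_{p,q}}+2^{-n}$. Each coordinate is dominated by that norm, so $\sum_n|s_Q^{(n)}|<\infty$ and $s_Q:=\sum_ns_Q^{(n)}$ is well defined, and Minkowski's inequality (for partial sums, then monotone convergence) bounds $\|(s_Q)\|_{\ell^q(\ell^p)}$ by $\sum_n(|f_n|_{\mathcal{B}^{s}_{p,q}}+2^{-n})$. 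Writing $g_{n,k}=\sum_{Q\in\mathcal{P}^k}s_Q^{(n)}a_Q$, the level-$k$ estimate yields $\sum_{n,k}|g_{n,k}|_p\le C_0\sum_n(|f_n|_{\mathcal{B}^{s}_{p,q}}+2^{-n})<\infty$, so the double family $\{g_{n,k}\}$ is absolutely summable in the Banach space $L^p$ and may be summed in either order: summing first over $k$ produces $\sum_nf_n=:f$, summing first over $n$ produces $\sum_k\sum_{Q\in\mathcal{P}^k}s_Qa_Q$. Thus $f\in\mathcal{B}^{s}_{p,q}$ with $|f|_{\mathcal{B}^{s}_{p,q}}\le\sum_n(|f_n|_{\mathcal{B}^{s}_{p,q}}+2^{-n})$, and applying the same argument to the tails $\sum_{n>N}f_n$ shows $\sum_{n\le N}f_n\to f$ in $\mathcal{B}^{s}_{p,q}$.

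For compactness of the closed unit ball $\overline B=\{f:\ |f|_{\mathcal{B}^{s}_{p,q}}\le1\}$ in $L^p$ I would show it is both totally bounded and closed there. For total boundedness, given $\varepsilon>0$ and $f\in\overline B$, choose a representation with $\|(s_Q)\|_{\ell^q(\ell^p)}\le2$ and put $g_k=\sum_{Q\in\mathcal{P}^k}s_Qa_Q$; the level-$k$ estimate gives $\big|f-\sum_{k\le K}g_k\big|_p\le2\sum_{k>K}\Crr{G2}^{ks}$, which is $<\varepsilon/2$ once $K=K(\varepsilon)$ is large, while $\sum_{k\le K}g_k$ lies in the finite-dimensional subspace $V_K=\mathrm{span}\{a_Q:\ Q\in\mathcal{P}^k,\ k\le K\}$ of $L^p$ and satisfies $\big|\sum_{k\le K}g_k\big|_p\le2C_0$; a finite $\varepsilon/2$-net of this bounded subset of $V_K$ (finite-dimensional Heine--Borel) is a finite $\varepsilon$-net of $\overline B$ in $L^p$. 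For closedness, if $f_n\in\overline B$ and $f_n\to f$ in $L^p$, take representations $f_n=\sum_{k,Q}s_Q^{(n)}a_Q$ with coefficient norm at most $1+1/n$; since $|s_Q^{(n)}|\le2$ for every $Q$ and $\cup_k\mathcal{P}^k$ is countable, a diagonal subsequence gives $s_Q^{(n)}\to s_Q$ for all $Q$, Fatou's lemma gives $\|(s_Q)\|_{\ell^q(\ell^p)}\le1$, and the level-$k$ estimate together with dominated convergence over $k$ shows that $\sum_{k,Q}s_Qa_Q$ converges in $L^p$ to $\lim_nf_n=f$; hence $f\in\overline B$. Since $L^p$ is complete, $\overline B$ is compact in $L^p$.

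The one step I expect to need genuine care is the bookkeeping in the completeness proof: one must verify that the ``diagonal'' coefficient sequence $(s_Q)$ actually $L^p$-represents $\sum_nf_n$, not merely that it is $\ell^q(\ell^p)$-summable. This, like the closedness argument above, rests on the absolute $L^p$-summability coming from the geometric decay $\|\mathcal{P}^k\|\le\Crr{G2}^k$; once that decay is in hand, the compactness assertion also follows almost for free, since it says precisely that finitely many levels approximate every element of $\overline B$ in $L^p$.
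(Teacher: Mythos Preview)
Your argument is correct. The paper does not actually prove this proposition: it is quoted from the companion paper \cite{smania-besov} (``The following results were proven in S.~\cite{smania-besov}''), so there is no in-paper proof to compare against. Your approach---deriving the geometric decay $\|\mathcal{P}^k\|\le\Crr{G2}^{k}$ from G2, G5, G6, using it to get the continuous embedding $\mathcal{B}^s_{p,q}\hookrightarrow L^p$, then verifying completeness via absolutely summable series and compactness via finite-level truncation plus a diagonal/Fatou closedness argument---is the natural direct route and each step checks out. The finiteness of each $\mathcal{P}^k$ (which you use to make $V_K$ finite-dimensional and to run the diagonal extraction) is indeed part of the grid axioms, and the dominated-convergence-over-$k$ step in the closedness argument is justified exactly as you indicate, since $|g_k^{(n)}-g_k|_p\le 3\Crr{G2}^{ks}$ uniformly in $n$.
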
 

\begin{proposition}\label{inc}  We have that 
$$\mathcal{B}^s_{p,q}\subset L^{t}$$ 
for every $t$ satisfying 
$$p\leq t < \frac{p}{1-sp}.$$
Moreover this inclusion is continuous, that is, there is $K_t> 0$ such that 
$$|f|_{t}\leq K_t  |f|_{\mathcal{B}^s_{p,q}}.$$
for every $f\in \mathcal{B}^s_{p,q}$.
\end{proposition}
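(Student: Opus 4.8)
The plan is to control the $L^t$-norm of $f$ one grid level at a time and then to sum the levels against the $\ell^q$-side of the $\mathcal{B}^s_{p,q}$-norm via H\"older's inequality. So I would start from an arbitrary $\mathcal{B}^s_{p,q}$-representation $f=\sum_{k\ge 0}f_k$ with $f_k=\sum_{Q\in\mathcal{P}^k}s_Q a_Q$, and first observe that each $f_k$ is a genuine finite sum of functions with pairwise disjoint supports (the family $\mathcal{P}^k$ is finite and, by G4, its elements are disjoint), so that
\[
|f_k|_t=\Big(\sum_{Q\in\mathcal{P}^k}|s_Q|^t\,|Q|^{1+(s-1/p)t}\Big)^{1/t}.
\]
The first real step is then the elementary embedding $\ell^p\hookrightarrow\ell^t$, valid because $t\ge p$: applied to the sequence $\big(|s_Q|\,|Q|^{s-1/p+1/t}\big)_{Q\in\mathcal{P}^k}$ it gives $|f_k|_t\le\big(\sum_{Q\in\mathcal{P}^k}|s_Q|^p\,|Q|^{\delta}\big)^{1/p}$, where $\delta:=sp-1+p/t$.

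The place where the hypothesis on $t$ is used is the assertion $\delta>0$, which, given $t\ge p$, is precisely the remaining condition $t<p/(1-sp)$ (reading $p/(1-sp)$ as $+\infty$ when $sp=1$, so that $\delta=p/t>0$ automatically in that case). Granting $\delta>0$, I would extract geometric decay from the good-grid axioms: iterating the ratio bound G6 down the nested chain provided by G5, from $\mathcal{P}^k$ back to $\mathcal{P}^0=\{I\}$ with $|I|=1$, shows $|Q|\le\theta^{k}$ for every $Q\in\mathcal{P}^k$, where $\theta\in(0,1)$ is the upper ratio constant of the good grid. Hence, setting $r:=\theta^{\delta/p}\in(0,1)$ and $\sigma_k:=\big(\sum_{Q\in\mathcal{P}^k}|s_Q|^p\big)^{1/p}$, one gets $|f_k|_t\le r^k\sigma_k$.

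Finally I would sum over $k$: $\sum_k|f_k|_t\le\sum_k r^k\sigma_k$, and H\"older's inequality in the dual pair $(q,q')$ bounds this by $\big(\sum_k r^{kq'}\big)^{1/q'}\big(\sum_k\sigma_k^q\big)^{1/q}$, the first factor being a finite constant $K_t$ since $r<1$ (with the obvious reading $\sup_k r^k=1$ when $q=1$). This shows in particular $\sum_k|f_k|_t<\infty$, so the partial sums of $\sum_k f_k$ are Cauchy in $L^t$; because $m(I)=1$ forces $L^t\subset L^p$, the $L^t$-limit must agree with the $L^p$-limit, namely $f$. Therefore $f\in L^t$ with $|f|_t\le K_t\big(\sum_k\sigma_k^q\big)^{1/q}$, and taking the infimum over all representations yields $|f|_t\le K_t|f|_{\mathcal{B}^s_{p,q}}$.

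I do not expect a serious obstacle here. The only thing to be careful about is the exponent bookkeeping that ties $\delta>0$ to exactly the stated range of $t$ (the borderline $t=p/(1-sp)$ being genuinely excluded), together with checking that the two summation steps — $\ell^p\hookrightarrow\ell^t$ within a level and H\"older across levels — are legitimate at the endpoints ($t=p$, where the statement is anyway immediate from the definition of $\mathcal{B}^s_{p,q}$, and $q=1$, where $q'=\infty$).
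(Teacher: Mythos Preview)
Your argument is correct. The paper itself does not prove this proposition: it is stated there as one of several results quoted from the companion paper \cite{smania-besov}, so there is no in-paper proof to compare against. The approach you outline --- computing $|f_k|_t$ exactly on each level using disjointness, passing from $\ell^t$ to $\ell^p$ on the finite index set $\mathcal{P}^k$, extracting the factor $|Q|^{\delta}\le\Crr{G2}^{k\delta}$ from the good-grid axiom G6, and then summing the resulting geometric sequence against $(\sigma_k)_k$ via H\"older in $(q,q')$ --- is the natural and standard one for atomic spaces of this type, and all the bookkeeping (in particular $\delta=sp-1+p/t>0$ exactly on the stated range, and the identification of the $L^t$-limit with $f$ via $L^t\hookrightarrow L^p$ on a probability space) is handled correctly.
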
 

There are many alternative definitions for $\mathcal{B}^s_{p,q}$. For instance, we can consider far more general atoms. Let
$$0< s< \beta < 1/p$$
Given $P\in \mathcal{P}$, denote by $\mathcal{B}^\beta_{p,q}(P)$ the set of all function $f \in \mathcal{B}^\beta_{p,q}$ that has a representation as in (\ref{rep55}) and (\ref{rep2}) and additionally $d_Q=0$ for every $Q\in \mathcal{P}$ that is not contained in $Q$.  The norm $|\cdot|_{\mathcal{B}^\beta_{p,q}(Q)}$ in $\mathcal{B}^\beta_{p,q}(Q)$ is defined as in (\ref{norm}), but the infimum is taken over all possible representations satisfying this additional condition. A $(s,\beta,p,q)$-Besov atom supported on $Q$ is a function $b_Q\in \mathcal{B}^\beta_{p,q}(Q)$ satisfying 
$$|b_Q|_{\mathcal{B}^\beta_{p,q}(Q)}\leq \frac{1}{\Cll[name]{GBVA}}|Q|^{s-\beta}.$$
We denote $\mathcal{A}^{bv}_{s,\beta,p,q}(Q)$ the set of $(s,\beta,p,q)$-Besov atoms supported on $Q$. The constant $\Crr{GBVA}$ is chosen such that  $a_Q\in \mathcal{A}^{bv}_{s,\beta,p,q}(Q).$

A  $(s,\beta,p,q)$-Besov {\it positive } atom supported on $Q$ is a function $b_Q\in \mathcal{A}^{bv}_{s,\beta,p,q}(Q)$ that has a  $\mathcal{B}^\beta_{p,q}(Q)$-representation 
$$b_Q= \sum_{k=0}^{\infty} \sum_{P \in \mathcal{P}^k, P\subset Q}    s_P a_P  $$
with $s_P\geq 0$ and satisfying 
$$\big( \sum_{k=0}^{\infty} (\sum_{Q \in \mathcal{P}_k}     |s_Q|^\beta )^{q/p} \big)^{1/q}\leq \frac{1}{\Cll[name]{GBVA}}|Q|^{s-\beta}.$$
The space $\mathcal{B}^{s}_{p,q}(\mathcal{A}^{bv}_{s,\beta,p,q})$ is the space of all functions that can be written as 
$$ f = \sum_{k=0}^{\infty} \sum_{Q \in \mathcal{P}^k}    s_Q b_Q$$
where $s_Q \in \mathbb{C}$, $b_Q\in  \mathcal{A}^{bv}_{s,\beta,p,q}$  and additionally  (\ref{rep2}) holds.  This is called a $\mathcal{B}^{s}_{p,q}(\mathcal{A}^{bv}_{s,\beta,p,q})$-representation of $f$. The norm $|\cdot|_{\mathcal{B}^{s}_{p,q}(\mathcal{A}^{bv}_{s,\beta,p,q})}$ is defined as in (\ref{norm}), where the infimum is taken instead over all possible $\mathcal{B}^{s}_{p,q}(\mathcal{A}^{bv}_{s,\beta,p,q})$-representations of $f$. Quite surprisingly we have

\begin{proposition}[From Besov to Souza representation] \label{besova} The Banach spaces  $\mathcal{B}^{s}_{p,q}(\mathcal{A}^{bv}_{s,\beta,p,q})$ and $\mathcal{B}^{s}_{p,q}$ coincides and its norms are equivalent. Indeed for every $\mathcal{B}^s_{p,q}(\mathcal{A}^{bv}_{s,\beta,p,q})$-representation  
$$g= \sum_{Q\in \mathcal{P}} d_Q  b_Q$$
there is a $\mathcal{B}^s_{p,q}(\mathcal{A}^{sz}_{s,p})$-representation 
$$g= \sum_{Q\in \mathcal{P}} z_Q  a_Q$$
such that 
\begin{equation*}
\Big(  \sum_i \Big(  \sum_{Q\in \mathcal{P}^i} |z_Q|^p    \Big)^{q/p}  \Big)^{1/q} \leq  \Cll[name]{GBS} \Big( \sum_i  \big( \sum_{W\in \mathcal{P}^i}  |s_W|^p  \big)^{q/p}\Big)^{1/q}.
\end{equation*}
Moreover if $d_Q\geq 0$ for every $Q\in \mathcal{P}$  and every  $b_Q$ is a  $(s,\beta,p,q)$-Besov positive   atom  supported on $Q$ then we can choose $z_Q\geq 0$ for every $Q\in \mathcal{P}$. 
\end{proposition}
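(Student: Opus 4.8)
The plan is to exploit that a $(s,\beta,p,q)$-Besov atom $b_Q$ supported on $Q$ is, \emph{by definition}, an element of $\mathcal{B}^\beta_{p,q}(Q)$ of norm at most $\Crr{GBVA}^{-1}|Q|^{s-\beta}$, hence already carries a Souza-type expansion; the only real work is bookkeeping plus the observation that passing from the normalization exponent $\beta$ to the smaller exponent $s$ buys a \emph{geometric} gain that makes the resulting double series summable. Write $a^\beta_P$ for the canonical atom of $\mathcal{B}^\beta_{p,q}$ on $P$, so that $a^\beta_P=|P|^{\beta-s}a_P$, where $a_P$ is the canonical Souza atom of $\mathcal{B}^s_{p,q}$. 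Given a $\mathcal{B}^s_{p,q}(\mathcal{A}^{bv}_{s,\beta,p,q})$-representation $g=\sum_{Q\in\mathcal{P}}d_Q b_Q$, for each $Q\in\mathcal{P}^i$ I would pick a near-optimal $\mathcal{B}^\beta_{p,q}(Q)$-representation $b_Q=\sum_{k\ge i}\sum_{P\in\mathcal{P}^k,\,P\subset Q}w^Q_P\,a^\beta_P$, so that $\big(\sum_{k\ge i}(\sum_P|w^Q_P|^p)^{q/p}\big)^{1/q}\le 2\Crr{GBVA}^{-1}|Q|^{s-\beta}$, choosing $w^Q_P\ge0$ when $b_Q$ is a positive Besov atom. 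Rewriting $b_Q=\sum_{k\ge i}\sum_{P\subset Q}z^Q_P a_P$ with $z^Q_P=|P|^{\beta-s}w^Q_P$, and using that $P\subset Q$, $P\in\mathcal{P}^k$, $Q\in\mathcal{P}^i$ force $|P|\le\Crr{G2}^{\,k-i}|Q|$ by G6 (hence $|P|^{\beta-s}\le\Crr{G2}^{\,(k-i)(\beta-s)}|Q|^{\beta-s}$ since $\beta>s$), and bounding the inner $\ell^p$-sum of the $w^Q_P$ by the full $\ell^q(\ell^p)$-norm, I obtain with $\theta:=\Crr{G2}^{\,\beta-s}\in(0,1)$ the key estimate
\[
\Big(\sum_{P\in\mathcal{P}^k,\,P\subset Q}|z^Q_P|^p\Big)^{1/p}\ \le\ 2\Crr{GBVA}^{-1}\,\theta^{\,k-i}\qquad(k\ge i,\ Q\in\mathcal{P}^i).
\]
This is the crux: an only-$\ell^q$-summable coefficient array becomes geometrically decaying in the level gap once re-expressed in $s$-atoms, precisely because $\beta>s$.

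Next I would substitute and regroup. Denoting by $I=P^{(0)}\supset P^{(1)}\supset\cdots\supset P^{(i)}=P$ the chain of ancestors of $P\in\mathcal{P}^i$ in the nested grid, one gets $g=\sum_{P\in\mathcal{P}}z_P a_P$ with $z_P=\sum_{j=0}^i d_{P^{(j)}}z^{P^{(j)}}_P$, the rearrangement being justified by absolute convergence in $L^p$ (which follows from $|a_P|_p=|P|^s$ and the geometric decay $\|\mathcal{P}^k\|\le\Crr{G2}^{\,k}$ coming from G6 — routine, but to be written out). For each level $i$, the triangle inequality in $\ell^p(\mathcal{P}^i)$ together with regrouping the $P\in\mathcal{P}^i$ under their level-$j$ ancestor $Q$ gives
\[
\Big(\sum_{P\in\mathcal{P}^i}|z_P|^p\Big)^{1/p}\le\sum_{j=0}^i\Big(\sum_{Q\in\mathcal{P}^j}|d_Q|^p\sum_{P\in\mathcal{P}^i,\,P\subset Q}|z^Q_P|^p\Big)^{1/p}\le 2\Crr{GBVA}^{-1}\sum_{j=0}^i\theta^{\,i-j}\Big(\sum_{Q\in\mathcal{P}^j}|d_Q|^p\Big)^{1/p},
\]
the last inequality being the key estimate with $k=i$. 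The right-hand side is a fixed constant times the discrete convolution of $(\theta^{\,n})_{n\ge0}\in\ell^1$ with $\big((\sum_{Q\in\mathcal{P}^j}|d_Q|^p)^{1/p}\big)_{j\ge0}$, so Young's inequality $\|a\ast b\|_{\ell^q}\le\|a\|_{\ell^1}\|b\|_{\ell^q}$ (with $\|(\theta^{\,n})_n\|_{\ell^1}=(1-\theta)^{-1}$) yields the stated inequality, with $\Crr{GBS}$ of the form $\mathrm{const}\cdot\Crr{GBVA}^{-1}(1-\Crr{G2}^{\,\beta-s})^{-1}$; the factor $2$ is an artifact of the near-optimal choice and is removed by a routine limiting argument. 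Positivity follows at once: $w^Q_P\ge0$ and $d_Q\ge0$ make every $z^Q_P\ge0$, hence every $z_P\ge0$.

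Finally, to conclude that $\mathcal{B}^s_{p,q}(\mathcal{A}^{bv}_{s,\beta,p,q})$ and $\mathcal{B}^s_{p,q}$ coincide with equivalent norms, the reverse inclusion is free: $\Crr{GBVA}$ is chosen so that each canonical Souza atom $a_Q$ is itself a $(s,\beta,p,q)$-Besov atom, so any $\mathcal{B}^s_{p,q}$-representation is already a $\mathcal{B}^s_{p,q}(\mathcal{A}^{bv}_{s,\beta,p,q})$-representation with the same coefficients, giving $|g|_{\mathcal{B}^s_{p,q}(\mathcal{A}^{bv})}\le|g|_{\mathcal{B}^s_{p,q}}$; combined with the estimate above this gives the equality of the spaces and the equivalence of norms. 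The step I expect to require genuine care is the absolute-convergence/rearrangement justification in the regrouping; everything else is the short chain \emph{rescale to $s$-atoms} $\Rightarrow$ \emph{geometric gain from $\beta>s$ and G6} $\Rightarrow$ \emph{Young's inequality}.
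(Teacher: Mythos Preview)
The paper does not contain its own proof of this proposition: it is listed among the results ``proven in S.\ \cite{smania-besov}'' and collected ``for the convenience of the reader,'' so there is no in-paper argument to compare against. Your proposal is a correct, self-contained proof and is in fact the natural one: expand each Besov atom in its defining $\mathcal{B}^\beta_{p,q}(Q)$-Souza representation, rescale from $\beta$-atoms to $s$-atoms to pick up the geometric factor $\Crr{G2}^{(\beta-s)(k-i)}$ in the level gap (this is where $\beta>s$ and G6 are used), and then recognise the resulting level-$i$ bound as a discrete convolution with an $\ell^1$ kernel so that Young's inequality closes the estimate. The positivity claim and the reverse inclusion (via the normalisation of $\Crr{GBVA}$) are handled exactly as you say.

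Two small remarks. First, the step ``bound the inner $\ell^p$-sum of the $w^Q_P$ by the full $\ell^q(\ell^p)$-norm'' uses only that a single nonnegative term is dominated by the $\ell^q$-norm of the sequence, valid for all $q\in[1,\infty]$; you might say this explicitly. Second, the absolute-convergence/rearrangement justification you flag as ``requiring genuine care'' is indeed routine here: the atoms $a_P$ at a fixed level have disjoint supports (G4), so $|\sum_{P\in\mathcal{P}^i} z_P a_P|_p=(\sum_P|z_P|^p|P|^{sp})^{1/p}\le \Crr{G2}^{\,si}(\sum_P|z_P|^p)^{1/p}$, and pairing the geometric weight $(\Crr{G2}^{\,si})_i\in\ell^{q'}$ with your $\ell^q$-bounded convolution via H\"older gives $\sum_i|\sum_{P\in\mathcal{P}^i}z_P a_P|_p<\infty$, which legitimises the regrouping.
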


\begin{proposition}\label{compactness}  The following sets are compact in $L^p$ (and in particular in $L^1$). 
\begin{itemize}
\item[A.] The set $S_C^{sz}$ of all  $f\in \mathcal{B}^s_{p,q}$ which have a $\mathcal{B}^s_{p,q}$-representation
\begin{equation} \label{eqww} f=\sum_k\sum_{Q\in \mathcal{P}^k} d_Q b_Q\end{equation} 
satisfying 
\begin{equation}\label{eqww2}  \sum_{k} (  \sum_{Q \in \mathcal{P}_k}     |d_Q|^p )^{q/p} \leq C.\end{equation} 
\item[B.] The  set $(S_C^{sz})^+$ of all $f\in \mathcal{B}^s_{p,q}$ that have a $\mathcal{B}^s_{p,q}$-representation satisfying (\ref{eqww}) and (\ref{eqww2}) and additionally $d_Q\geq 0$ for every $Q\in \mathcal{P}$.
\item[C.]  The set $S_C^{bv}$ of all  $f\in \mathcal{B}^s_{p,q}(\mathcal{A}^{bv}_{\beta,p,q})$ which has a $\mathcal{B}^s_{p,q}(\mathcal{A}^{bv}_{\beta,p,q})$-representation
\begin{equation} \label{eqw} f=\sum_k\sum_{Q\in \mathcal{P}^k} d_Q b_Q\end{equation} 
satisfying 
\begin{equation}\label{eqw2}  \sum_{k} (  \sum_{Q \in \mathcal{P}_k}     |d_Q|^p )^{q/p} \leq C.\end{equation}
\item[D.] The  set $(S_C^{bv})^+$ of all $f\in \mathcal{B}^s_{p,q}(\mathcal{A}^{bv}_{\beta,p,q})$ that has a $\mathcal{B}^s_{p,q}(\mathcal{A}^{bv}_{\beta,p,q})$-representation satisfying (\ref{eqw}) and (\ref{eqw2}) and additionally $d_Q\geq 0$ for every $Q\in \mathcal{P}$ and $b_Q$ is a   $(s,\beta,p,q)$-Besov positive   atom.
\end{itemize}
\end{proposition}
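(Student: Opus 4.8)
The plan is to exhibit each of the four sets as the continuous image of a compact space under a \emph{synthesis} (coefficient-to-function) map, the domain being equipped with the product (coordinatewise) topology. Fix $C>0$ and set
$$X_C=\Big\{\,(d_Q)_{Q\in\mathcal{P}}\in\mathbb{C}^{\mathcal{P}}\ :\ \sum_{k\geq 0}\Big(\sum_{Q\in\mathcal{P}^k}|d_Q|^{p}\Big)^{q/p}\leq C\,\Big\}.$$
Since every $\mathcal{P}^k$ is finite, each coordinate obeys $|d_Q|\leq C^{1/q}$, so $X_C$ lies inside the countable product $\prod_{Q}\{z\in\mathbb{C}:|z|\leq C^{1/q}\}$, which is compact and metrizable by Tychonoff; and $X_C$ is closed there, because truncating the outer sum to finitely many levels is a continuous function of the coordinates and the bound $\leq C$ survives the supremum over truncations. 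Hence $X_C$, and likewise its subset $X_C^{+}$ of sequences with $d_Q\geq 0$ for all $Q$, is compact.

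For A and B, define $\Sigma\colon X_C\to L^{p}$ by $\Sigma\big((d_Q)\big)=\sum_{k\geq 0}\sum_{Q\in\mathcal{P}^k}d_Q a_Q$. This is well defined: the atoms $\{a_Q\}_{Q\in\mathcal{P}^k}$ have pairwise disjoint supports (G4) and $|a_Q|_p=|Q|^{s}\leq\Crr{G2}^{ks}$ (iterate G5--G6, using $\mathcal{P}^{0}=\{I\}$ and $|I|=1$), so
$$\Big|\sum_{Q\in\mathcal{P}^k}d_Q a_Q\Big|_{p}=\Big(\sum_{Q\in\mathcal{P}^k}|d_Q|^{p}|Q|^{sp}\Big)^{1/p}\leq\Crr{G2}^{ks}\Big(\sum_{Q\in\mathcal{P}^k}|d_Q|^{p}\Big)^{1/p}\leq\Crr{G2}^{ks}C^{1/q},$$
and the sum over $k$ converges. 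The same bound gives continuity of $\Sigma$: split a point of $X_C$ into its first $N$ levels and the rest; the head is a finite sum, continuous in the (boundedly many) coordinates, while the tail has $L^p$-norm at most $C^{1/q}\sum_{k>N}\Crr{G2}^{ks}\to 0$ uniformly over $X_C$. Since $S_C^{sz}=\Sigma(X_C)$ and $(S_C^{sz})^{+}=\Sigma(X_C^{+})$, both are compact in $L^p$, hence in $L^1$ (on a probability space $|\cdot|_1\leq|\cdot|_p$, and compactness passes to the coarser topology). This settles A and B.

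For C and D the atoms move as well, and the key extra ingredient is the uniform estimate
$$|b_Q|_{p}\leq c_0\,|Q|^{s}\qquad\text{for every }Q\in\mathcal{P}\text{ and }b_Q\in\mathcal{A}^{bv}_{s,\beta,p,q}(Q),$$
with $c_0$ depending only on $\Crr{GBVA},\Crr{G2},\beta$. Indeed, represent $b_Q=\sum_{j\geq k_0(Q)}\sum_{P\in\mathcal{P}^{j},\,P\subset Q}s_P a_P$ by $\beta$-canonical atoms with coefficient norm arbitrarily close to $|b_Q|_{\mathcal{B}^{\beta}_{p,q}(Q)}\leq\tfrac{1}{\Crr{GBVA}}|Q|^{s-\beta}$; disjointness within each level (G4) and $|P|\leq\Crr{G2}^{\,j-k_0(Q)}|Q|$ give $\big|\sum_{P\in\mathcal{P}^{j},P\subset Q}s_P a_P\big|_p\leq\Crr{G2}^{\,\beta(j-k_0(Q))}|Q|^{\beta}\cdot\tfrac{1}{\Crr{GBVA}}|Q|^{s-\beta}$, and summing the geometric series in $j$ yields the claim. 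Moreover $\mathcal{A}^{bv}_{s,\beta,p,q}(Q)$, and the set of $(s,\beta,p,q)$-Besov positive atoms on $Q$, is itself compact in $L^p$: it is precisely a set of the kind treated in A (resp.\ B), built now from $\beta$-canonical atoms supported in $Q$ and with total coefficient weight $\leq(\tfrac{1}{\Crr{GBVA}}|Q|^{s-\beta})^{q}$ (the relevant infimum being attained, by continuity of the synthesis map). Give $Y=\prod_{Q}\mathcal{A}^{bv}_{s,\beta,p,q}(Q)$ and its positive-atom analogue $Y^{+}$ the product topology, so both are compact and metrizable, and define $\widehat\Sigma\colon X_C\times Y\to L^{p}$, $\big((d_Q),(b_Q)\big)\mapsto\sum_{k}\sum_{Q\in\mathcal{P}^k}d_Q b_Q$. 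The very same head/tail argument, now with $|b_Q|_p\leq c_0\Crr{G2}^{ks}$ on level $k$, shows $\widehat\Sigma$ is well defined and continuous, and $S_C^{bv}=\widehat\Sigma(X_C\times Y)$, $(S_C^{bv})^{+}=\widehat\Sigma(X_C^{+}\times Y^{+})$ are compact in $L^p$ (hence in $L^1$).

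The step I expect to be the main obstacle is the uniform tail control underlying the well-definedness and continuity of the synthesis maps: a norm-bounded family in the mixed space $\ell^{q}(\ell^{p})$ need not have uniformly small tails, so the argument genuinely uses the geometric shrinking of cell sizes from G6 --- the gain $|Q|^{s}\leq\Crr{G2}^{ks}$ at level $k$ --- and, for C and D, the sharp bound $|b_Q|_p\lesssim|Q|^{s}$ for arbitrary Besov atoms. Everything else (Tychonoff compactness of the coefficient and atom spaces, the head/tail split, lower semicontinuity of the norms) is routine; Proposition~\ref{besova} is not needed for this argument, although it yields the alternative containment $S_C^{bv}\subset S^{sz}_{\,\Crr{GBS}^{q}C}$, which would reduce C and D to A once closedness is checked separately.
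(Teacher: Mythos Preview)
The paper does not prove this proposition here; it is quoted (together with Propositions~5.1--5.7) from the companion paper \cite{smania-besov}, so there is no in-paper argument to compare against. Your proof is correct and self-contained: the key point---uniform tail control via $|a_Q|_p=|Q|^{s}\le\Crr{G2}^{ks}$ (and, for C--D, the analogous bound $|b_Q|_p\le c_0|Q|^{s}$ for Besov atoms)---is exactly what makes the synthesis maps continuous on the Tychonoff-compact coefficient/atom spaces, and your observation that the infimum defining $|\cdot|_{\mathcal{B}^\beta_{p,q}(Q)}$ is attained (so that $\mathcal{A}^{bv}_{s,\beta,p,q}(Q)$ is genuinely the image of a closed coefficient ball) neatly closes the loop. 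The alternative route you mention at the end, namely $S_C^{bv}\subset S^{sz}_{\Crr{GBS}^{q}C}$ via Proposition~\ref{besova}, is in fact closer in spirit to how the paper \emph{uses} this proposition (reducing Besov-atom representations to Souza-atom ones), but as you note it would still require a separate closedness argument, which your direct approach avoids.
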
 



\begin{proposition}[Canonical representation] \label{canrep2} There is $\Cll[name]{GC}\geq 1$ with the following property. For every $P\in \mathcal{P}$ there is a   linear functional  in $(L^1)^\star$
$$f\in L^1 \mapsto k_P^f$$
such that if $f\in \mathcal{B}^s_{p,q}$  then 
$$\sum_k \sum_{P\in \mathcal{P}^k} k_P^f a_P$$
is a $\mathcal{B}^s_{p,q}$-representation of $f$ satisfying
$$\Big( \sum_k \big( \sum_{P\in \mathcal{P}^k} |k_P^f|^p \big)^{q/p} \Big)^{1/q}\leq \Crr{GC} |f|_{\mathcal{B}^s_{p,q}}.$$
\end{proposition}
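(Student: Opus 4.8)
The plan is to exhibit the functionals $f\mapsto k_P^f$ explicitly as martingale differences along the grid, and then to control the canonical coefficients of an $f\in\mathcal{B}^s_{p,q}$ by those of an arbitrary $\mathcal{B}^s_{p,q}$-representation of $f$ through an almost-diagonality estimate. For $Q\in\mathcal{P}^k$ with $k\ge 1$, write $\widehat Q\in\mathcal{P}^{k-1}$ for its (unique, by G4 and G5) parent and set
$$k_Q^f=|Q|^{1/p-s}\Big(\frac1{|Q|}\int_Q f\,dm-\frac1{|\widehat Q|}\int_{\widehat Q}f\,dm\Big),\qquad k_I^f=\int_I f\,dm .$$
For each fixed $Q$ this is linear in $f$ with $|k_Q^f|\le 2|Q|^{1/p-s-1}|f|_1$, hence $f\mapsto k_Q^f\in(L^1)^\star$. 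Let $E_n f$ denote the conditional expectation of $f$ onto the finite $\sigma$-algebra generated by $\mathcal{P}^n$, so that $E_n f$ is constant and equal to $|Q|^{-1}\int_Q f$ on each $Q\in\mathcal{P}^n$ (by G3 and G4); a telescoping computation on each such $Q$ gives $\sum_{k=0}^{n}\sum_{P\in\mathcal{P}^k}k_P^f\,a_P=E_n f$. By G7 and the martingale convergence theorem, $E_n f\to f$ in $L^p$ whenever $f\in L^p$, and $\mathcal{B}^s_{p,q}\subset L^p$; moreover $\big|\sum_{P\in\mathcal{P}^n}k_P^f a_P\big|_p=|E_n f-E_{n-1}f|_p\le\Crr{G2}^{\,ns}\big(\sum_{P\in\mathcal{P}^n}|k_P^f|^p\big)^{1/p}$, since $|P|\le\Crr{G2}^{\,n}$ for $P\in\mathcal{P}^n$ by G6. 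Thus, once the coefficient estimate below is in place, the series converges absolutely in $L^p$ to $f$ and is a genuine $\mathcal{B}^s_{p,q}$-representation of it.

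For the coefficient estimate, fix any $\mathcal{B}^s_{p,q}$-representation $f=\sum_j\sum_{W\in\mathcal{P}^j}s_W a_W$ with $\big(\sum_j\sigma_j^q\big)^{1/q}\le|f|_{\mathcal{B}^s_{p,q}}+\varepsilon$, where $\sigma_j:=\big(\sum_{W\in\mathcal{P}^j}|s_W|^p\big)^{1/p}$. Integrating the series term by term over $P$ and over $\widehat P$ (legitimate since $E_k$ is continuous on $L^p$) and cancelling the contributions of the ancestors common to $P$ and $\widehat P$, one arrives at the closed form
$$k_P^f=|P|^{1/p-s-1}A_P-\frac{|P|^{1/p-s}}{|\widehat P|}\,S_{\widehat P},\qquad A_P=\sum_{W\in\mathcal{P},\,W\subset P}s_W|W|^{s+1-1/p},\quad S_R=\sum_{P'\in\mathcal{P}^k,\,P'\subset R}A_{P'},$$
valid for $P\in\mathcal{P}^k$ with $R=\widehat P$; the two series converge absolutely by the estimate below. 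In words, $|P|^{1/p-s-1}A_P$ is $|P|^{1/p-s}$ times the average of $f$ on $P$, and the $S_{\widehat P}$-term is $|P|^{1/p-s}$ times the analogous average on $\widehat P$.

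The key lemma is an off-diagonal decay estimate. For $k\le j$ and $P\in\mathcal{P}^k$, using $|W|\le\Crr{G2}^{\,j-k}|P|$ for $W\in\mathcal{P}^j$, $W\subset P$, together with $\sum_{W\subset P,\,W\in\mathcal{P}^j}|W|\le|P|$ and Hölder in the index $W$ (conjugate exponent $p'$ of $p$; the case $p=1$ is immediate), one gets
$$\sum_{W\in\mathcal{P}^j,\,W\subset P}|s_W|\,|W|^{s+1-1/p}\ \le\ \Crr{G2}^{\,(j-k)s}\,|P|^{s+1-1/p}\Big(\sum_{W\in\mathcal{P}^j,\,W\subset P}|s_W|^p\Big)^{1/p}.$$
Because $(s+1-1/p)p+(1/p-s-1)p=0$, raising this to the $p$-th power and summing over $P\in\mathcal{P}^k$ — each $W\in\mathcal{P}^j$ lying in exactly one such $P$, by G5 — bounds the level-$j$ contribution to the $A_P$-term in $\ell^p(\mathcal{P}^k)$ by $\Crr{G2}^{\,(j-k)s}\sigma_j$; Minkowski over $j\ge k$ then bounds the full $A_P$-term by $\sum_{j\ge k}\Crr{G2}^{(j-k)s}\sigma_j$. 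The $S_{\widehat P}$-term is treated the same way one level higher: the factors $|P|^{1/p-s}/|\widehat P|$ are summed over the children of each $R\in\mathcal{P}^{k-1}$ using G6, reducing it to the same estimate with $R$ in place of $P$, and one gets at most $\Crr{G1}^{-s}\Crr{G2}^{s}\sum_{j\ge k}\Crr{G2}^{(j-k)s}\sigma_j$. Altogether,
$$\Big(\sum_{P\in\mathcal{P}^k}|k_P^f|^p\Big)^{1/p}\ \le\ \big(1+\Crr{G1}^{-s}\Crr{G2}^{s}\big)\sum_{j\ge k}\Crr{G2}^{\,(j-k)s}\,\sigma_j .$$

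Finally, the right-hand side is the one-sided convolution of the sequence $(\Crr{G2}^{\,ns})_{n\ge 0}$ — which is in $\ell^1$ precisely because $s>0$ and $\Crr{G2}<1$ — with $(\sigma_j)_j$, so Young's convolution inequality $\ell^1\ast\ell^q\hookrightarrow\ell^q$ yields $\big(\sum_k(\sum_{P\in\mathcal{P}^k}|k_P^f|^p)^{q/p}\big)^{1/q}\le\frac{1+\Crr{G1}^{-s}\Crr{G2}^{s}}{1-\Crr{G2}^{s}}\,\big(|f|_{\mathcal{B}^s_{p,q}}+\varepsilon\big)$; letting $\varepsilon\to 0$ gives the proposition with $\Crr{GC}=(1+\Crr{G1}^{-s}\Crr{G2}^{s})/(1-\Crr{G2}^{s})\ge 1$, which also justifies the absolute $L^p$-convergence used in the first paragraph. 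I expect the main obstacle to be the off-diagonal estimate and its aggregation over a single level: since within $\mathcal{P}^k$ the cells may have wildly different measures (G6 controls only parent/child ratios), everything hinges on arranging — via the closed form and the exact cancellation of the $|P|$-exponents — that the sum over $P\in\mathcal{P}^k$ collapses to $\sigma_j$, while separately checking that the correction term $S_{\widehat P}/|\widehat P|$ does not destroy this collapse. A cruder route, estimating each entry $k_P^{a_W}$ of the analysis matrix individually and then summing, is also feasible but involves heavier bookkeeping.
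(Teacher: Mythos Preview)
Your proof is correct. The definition of $k_P^f$ as (normalised) martingale differences along the grid is the natural one; the telescoping identity $\sum_{k\le n}\sum_{P\in\mathcal P^k}k_P^f a_P=E_nf$ and the convergence $E_nf\to f$ in $L^p$ are fine, and your off-diagonal lemma---splitting $|W|^{s+1-1/p}$ as $|W|^{s}\cdot|W|^{1/p'}$, pulling out $|W|^s\le(\Crr{G2}^{\,j-k}|P|)^s$, applying H\"older with $(p,p')$, and using $\sum_{W\subset P}|W|\le|P|$---gives exactly the bound you state; the exponents do cancel so that the sum over $P\in\mathcal P^k$ collapses to $\sigma_j$. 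The treatment of the $S_{\widehat P}$ correction, grouping children by their parent $R\in\mathcal P^{k-1}$ and using $\sum_{P\subset R}|P|^{1-sp}\le\Crr{G1}^{-sp}|R|^{1-sp}$, is also sound and produces the factor $\Crr{G1}^{-s}\Crr{G2}^{s}$ you record. The final Young-inequality step is standard.

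As for comparison with the paper: this proposition is not proved in the present paper; it is quoted from the companion reference \cite{smania-besov} (``The following results were proven in S.~\cite{smania-besov}. We collect them here for the convenience of the reader''). So there is no proof here to compare against. Your approach---defining the canonical coefficients as Haar-type martingale differences and controlling them by an almost-diagonality estimate against an arbitrary atomic representation---is the standard one for results of this type and is almost certainly what the cited paper does as well.
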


\begin{proposition}[From Besov to Souza representation] \label{besova} For every $\mathcal{B}^s_{p,q}(\mathcal{A}^{bv}_{\beta,p,q})$-representation  
$$g= \sum_{Q\in \mathcal{P}} d_Q  b_Q$$
there is a $\mathcal{B}^s_{p,q}(\mathcal{A}^{sz}_{s,p})$-representation 
$$g= \sum_{Q\in \mathcal{P}} z_Q  a_Q$$
such that 
\begin{equation*}
\Big(  \sum_i \Big(  \sum_{Q\in \mathcal{P}^i} |z_Q|^p    \Big)^{q/p}  \Big)^{1/q} \leq  \Cll[name]{GBS} \Big( \sum_i  \big( \sum_{W\in \mathcal{P}^i}  |s_W|^p  \big)^{q/p}\Big)^{1/q}.
\end{equation*}
Moreover if $d_Q\geq 0$ for every $Q\in \mathcal{P}$  and every atom $b_Q$ is $\mathcal{B}^s_{p,q}$-positive  then we can choose $z_Q\geq 0$ for every $Q\in \mathcal{P}$. 
\end{proposition}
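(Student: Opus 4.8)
The plan is to substitute into the given decomposition $g=\sum_{Q\in\mathcal P}d_Q b_Q$ a Souza decomposition of each individual Besov atom $b_Q$, and then to collect the resulting double series level by level. By the definition of $\mathcal A^{bv}_{\beta,p,q}(Q)$ we have $b_Q\in\mathcal B^\beta_{p,q}(Q)$ with $|b_Q|_{\mathcal B^\beta_{p,q}(Q)}\le\Crr{GBVA}^{-1}|Q|^{s-\beta}$, so I would first fix, for each $Q$, a $\mathcal B^\beta_{p,q}(Q)$-representation
\[
b_Q=\sum_{\substack{P\in\mathcal P\\ P\subset Q}}t^Q_P\,a^\beta_P
\]
in the canonical degree-$\beta$ atoms $a^\beta_P$ (equal to $|P|^{\beta-1/p}$ on $P$ and $0$ off $P$), chosen so that the associated $\ell^q(\ell^p)$-quantity is at most $2\,\Crr{GBVA}^{-1}|Q|^{s-\beta}$ (if $b_Q=0$ a.e.\ there is nothing to do). Since $a^\beta_P=|P|^{\beta-s}a_P$, this is the same as a degree-$s$ Souza representation $b_Q=\sum_{P\subset Q}s^Q_P a_P$ with $s^Q_P=|P|^{\beta-s}t^Q_P$. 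Plugging this into $g=\sum_Q d_Q b_Q$ and interchanging the order of summation then gives the candidate
\[
g=\sum_{P\in\mathcal P}z_P\,a_P,\qquad z_P=\sum_{Q\supset P}d_Q\,s^Q_P ,
\]
where for $P\in\mathcal P^j$ the sum runs over the $j+1$ ancestors $Q_0(P)\supset\cdots\supset Q_j(P)=P$ of $P$ supplied by G5 and G4. The positivity clause is then immediate: if every $d_Q\ge0$ and each $b_Q$ is a positive Besov atom, then the $t^Q_P$ can be taken $\ge0$, hence $s^Q_P\ge0$, hence $z_P\ge0$.

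The heart of the proof is the estimate of the $\ell^q(\ell^p)$-quantity of $(z_P)$. Put $D_k=\big(\sum_{Q\in\mathcal P^k}|d_Q|^p\big)^{1/p}$, so that the right-hand side of the asserted inequality equals $\Crr{GBS}\big(\sum_k D_k^q\big)^{1/q}$. Fix a level $j$. Minkowski's inequality in $\ell^p(\mathcal P^j)$ gives
\[
\Big(\sum_{P\in\mathcal P^j}|z_P|^p\Big)^{1/p}\le\sum_{k=0}^{j}\Big(\sum_{P\in\mathcal P^j}\big|d_{Q_k(P)}\,s^{Q_k(P)}_P\big|^p\Big)^{1/p}.
\]
For the $k$-th term I would group the $P\in\mathcal P^j$ according to their level-$k$ ancestor $Q$ (a genuine partition of $\mathcal P^j$, by G5 and G4), so that the term equals $\big(\sum_{Q\in\mathcal P^k}|d_Q|^p\sum_{P\subset Q,\,P\in\mathcal P^j}|s^Q_P|^p\big)^{1/p}$; then bound $|s^Q_P|^p=|P|^{p(\beta-s)}|t^Q_P|^p\le\Crr{G2}^{\,p(\beta-s)(j-k)}|Q|^{p(\beta-s)}|t^Q_P|^p$ (iterating G6, using $\beta-s>0$) and use $\big(\sum_{P\subset Q,\,P\in\mathcal P^j}|t^Q_P|^p\big)^{1/p}\le 2\,\Crr{GBVA}^{-1}|Q|^{s-\beta}$. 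The powers of $|Q|$ cancel and one is left with
\[
\Big(\sum_{P\in\mathcal P^j}\big|d_{Q_k(P)}\,s^{Q_k(P)}_P\big|^p\Big)^{1/p}\le\frac{2}{\Crr{GBVA}}\,\theta^{\,j-k}\,D_k,\qquad \theta:=\Crr{G2}^{\beta-s}<1 .
\]
Hence $\big(\sum_{P\in\mathcal P^j}|z_P|^p\big)^{1/p}$ is at most $2\,\Crr{GBVA}^{-1}$ times the one-sided convolution of $(D_k)_k$ with the summable geometric sequence $(\theta^n)_{n\ge0}$, and Young's inequality for sequences yields
\[
\Big(\sum_j\Big(\sum_{P\in\mathcal P^j}|z_P|^p\Big)^{q/p}\Big)^{1/q}\le\frac{2}{\Crr{GBVA}\,(1-\Crr{G2}^{\beta-s})}\Big(\sum_k D_k^q\Big)^{1/q},
\]
i.e.\ the claim with $\Crr{GBS}=2\,\Crr{GBVA}^{-1}(1-\Crr{G2}^{\beta-s})^{-1}$.

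The step I expect to be the real obstacle is not the estimate above but the bookkeeping that makes the rearrangement legitimate and shows that the rearranged series genuinely represents $g$ in $L^p$: one must know that $\sum_{Q}\sum_{P\subset Q}d_Q s^Q_P a_P$ converges absolutely in $L^p$, so that it may be summed in any order. This is the same absolute-convergence argument (relying on $s\le 1/p$, the geometric control G6, and the bounded overlap G1) that underlies the fact, established in \cite{smania-besov}, that $\mathcal B^s_{p,q}$ is a well-defined Banach space, and I would invoke that directly. As a safeguard one can argue as follows: the previous paragraph shows in particular that $(z_P)$ satisfies \eqref{rep2}, hence $\sum_P z_P a_P$ converges absolutely in $L^p$ to some $h\in\mathcal B^s_{p,q}$; comparing, for each $N$, the partial sum $\sum_{k\le N}\sum_{Q\in\mathcal P^k}d_Q b_Q$ of the defining series of $g$ with the corresponding partial sum of $\sum_P z_P a_P$, and using the absolute $L^p$-convergence of the series for $g$ and for each $b_Q$ to control the tails, forces $h=g$.
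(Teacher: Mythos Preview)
The paper does not supply its own proof of this proposition; it is quoted from the companion paper \cite{smania-besov} (see the sentence preceding the block of propositions in Section~\ref{space}), so there is nothing here to compare against line by line. Your argument is correct and is the natural one: expand each Besov atom $b_Q$ in canonical $\beta$-atoms, convert to $s$-atoms at the cost of a factor $|P|^{\beta-s}\le \Crr{G2}^{(\beta-s)(j-k)}|Q|^{\beta-s}$ via G6, cancel against the size bound $|b_Q|_{\mathcal B^\beta_{p,q}(Q)}\le \Crr{GBVA}^{-1}|Q|^{s-\beta}$, and sum the resulting one-sided geometric convolution by Young's inequality; this is exactly the mechanism one expects, and it is essentially how the result is proved in \cite{smania-besov}. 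Two minor remarks: (i) your extraction of the single-level bound $\big(\sum_{P\in\mathcal P^j,\,P\subset Q}|t^Q_P|^p\big)^{1/p}\le 2\Crr{GBVA}^{-1}|Q|^{s-\beta}$ from the full $\ell^q(\ell^p)$ control is legitimate for every $q\in[1,\infty]$, and is all that is needed; (ii) your constant $\Crr{GBS}=2\,\Crr{GBVA}^{-1}(1-\Crr{G2}^{\beta-s})^{-1}$ depends on $\beta$, which is harmless here since $\beta$ is fixed once and for all in Assumption~$\Crr{A000}$. The absolute-convergence and rearrangement issue you flag in the last paragraph is real but routine, and your safeguard argument (showing $(z_P)$ satisfies \eqref{rep2}, hence defines some $h\in\mathcal B^s_{p,q}$, and then identifying $h=g$ by comparing truncations) is exactly the right way to close it.
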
 

\fcolorbox{black}{white}{
\begin{minipage}{\textwidth}
\noindent {\bf Assumption $\Cll[A]{A000}$.} From  now on we fix measure spaces with  good grids $(I,\mathcal{P},m)$, $(J,\mathcal{H},\mu)$, $p\in [1,\infty)$, $q\in [1,\infty]$,  $\epsilon > 0$, $\gamma\in [0,1]$  and $s, \beta,\delta \in (0,\infty)$ be such that
$$0< s+\epsilon  \leq  \frac{1}{p}, \ s< \beta < \frac{1}{p}$$
and
$0< \delta < \max \{ s,\epsilon\}.$ 
\end{minipage} 
}
\ \\ \\
Let
\begin{equation} \label{t0} t_0=\frac{p}{1-sp+\delta p}.\end{equation} 
Note that 
$$p<  t_0 < \frac{p}{1-sp},$$so in n particular $\mathcal{B}^s_{p,q}\subset L^{t_0}$.

\begin{table}[h]
  \centering
  \caption{Constants associated with the {\bf G}eometry of the grid}
  \label{tab:table2}
  \begin{tabular}{cc}
    \toprule
    Symbol & Description\\
    \midrule
    $\Crr{G1}\leq \Crr{G2}$ &{\bf G}eometry of the grid\\
    $\Crr{GC}$  & Describes how optimal is the {\bf C}anonical Souza's representation \\
    $\Crr{GBS}$& Control the conversion of a representation using {\bf B}esov's atoms to \\
    & a representation using {\bf S}ouza's atoms. \\
    \bottomrule
  \end{tabular}
\end{table}

\section{Regular domains}\label{rdf}

   We say that $\Omega\subset J$ is a $(\alpha, \Cll{domain},\Cll[c]{DRegDom})$-regular domain if  it is possible to find  families $\mathcal{F}^k(\Omega)\subset \mathcal{H}^k$, $k\geq  k_0(\Omega,\mathcal{H})$,  such that

\begin{itemize}
\item[A.] We have $\Omega = \cup_{k\geq k_0(\Omega)} \cup_{Q\in \mathcal{F}^k(\Omega)} Q$.
\item[B.] If $P,Q \in \cup_{k\geq k_0(\Omega)} \mathcal{F}^k(\Omega)$ and $P\neq Q$ then $P\cap Q=\emptyset$. 
\item[C.] We have
\begin{equation}\label{dom}  \sum_{Q\in \mathcal{F}^k(\Omega)} |Q|^{\alpha}\leq \Crr{domain} \Crr{DRegDom}^{k-k_0(\Omega)} |\Omega|^{\alpha}.\end{equation} 
\end{itemize}

\section{Branches}\label{contra}  Let $\hat{I}\subset I$ and $\hat{J}\subset J$ be measurable sets. A 
$$(s,p,a, \tilde{\epsilon}, \Cll[name]{DGD1},\Cll[namec]{DGD2}, \Cll[name]{DC1},\Cll[namec]{DC2},\mathcal{G})-branch$$ is a measurable  function
$$h\colon \hat{J} \rightarrow \hat{I}$$
such that 
$$h^{-1}\colon \hat{I} \rightarrow \hat{J}$$
is also measurable, $\mathcal{G}$ is a grid on $(\hat{I},m)$  and \\
\begin{itemize}
\item[I.] We have that  $m(Q)=0$ if and only if $\mu(h^{-1}(Q))=0$ for every measurable  set $Q\subset J$.\\
\item[II.] {\bf (Geometric Distortion Control).} For every $Q\subset I$ such that $Q\in \mathcal{G}$  we have  that $h^{-1}(Q)$ is a $(1-sp,\Crr{DGD1},\Crr{DGD2})$-regular domain in $(J,\mathcal{H})$. This property controls how the action of $h^{-1}$ {\it deforms} the ''shape"  of $Q$. \\
\item[III.]{\bf (Scaling Control).} We have 
$$|k_0(Q,\mathcal{G})-k_0(h^{-1}(Q),\mathcal{H})|  \geq   a.$$
and 
\begin{equation} \label{tam} \Big(  \frac{|Q|}{|h^{-1}(Q)|}\Big)^{\tilde{\epsilon}/|\tilde{\epsilon}|} \leq \Crr{DC1} \Crr{DC2}^{ |k_0(Q,\mathcal{G})-k_0(h^{-1}(Q),\mathcal{H})|}.\end{equation}  
\item[IV.] The set $\hat{I}$   is a countable union (up to a set of zero measure) of elements of $\mathcal{P}$.\\
\item[V.]  For every $W\in \mathcal{G}$ such that $W \subset \hat{I}$ we have that $h^{-1}(W)$ is a countable union (up to a set of zero measure) of elements of $\mathcal{H}$.
\end{itemize}


\section{Potentials}\label{pot} Let 
$$h\colon \hat{J} \rightarrow \hat{I}$$
be a branch  as in Section \ref{contra}. A $(\Cll[name]{DRP}, \beta,\tilde{\epsilon})$-{\bf  regular potential}, with $\beta$ such that $s < \beta\leq 1/p$, associate to $h$ is a function $g\colon \hat{J} \rightarrow \mathbb{C}$  satisfying 
\begin{equation}\label{supe33}  |g\cdot 1_{W}|_{\mathcal{B}^\beta_{p,q}(W,\mathcal{H}_W,\mathcal{A}^{sz}_{p,q})}    \leq \Crr{DRP}  \Big( \frac{|Q|}{|h^{-1}Q|}\Big)^{\frac{1}{p} -s+\tilde{\epsilon}} |W|^{1/p-\beta}.\end{equation}
for every  $W \in \mathcal{H}$ and $Q \in \mathcal{G}$ such that $W \subset \hat{J}$, $Q \subset \hat{I}$ and $h(W)\subset Q$.

We say the potential $g$ is  $\mathcal{B}^s_{p,q}$-{\bf positive regular potential}  if for every $W\in \mathcal{H}$ such that $W \subset J$, there is a $\mathcal{B}^\beta_{p,q}$-representation of $g\cdot 1_{W}$
$$g \cdot 1_{W}=\sum_k  \sum_{\substack{P\in \mathcal{H}^k\\Q\subset W}} c_P a_P$$
such that $c_P\geq 0$ for every $P\in \mathcal{H}$ and moreover

\begin{equation}  \Big( \sum_k \big( \sum_{\substack{P\in \mathcal{H}^k\\P\subset W}}   |c_P|^p \big)^{q/p} \Big)^{1/q}  \leq \Crr{DRP}\Big( \frac{|Q|}{|h^{-1}Q|}\Big)^{\frac{1}{p} -s+\tilde{\epsilon}} |W|^{1/p-\beta}.\end{equation}
for every  $W\in \mathcal{H}$ and $Q \in \mathcal{G}$ such that $W \subset J$, $Q \subset I$ and $h(W)\subset Q$.

\section{Transfer transformations}  \label{transfer}
Assume
\ \\  \\
\fcolorbox{black}{white}{
\begin{minipage}{\textwidth}
\noindent {\bf Assumption $\Cll[A]{A00}$.} Along this paper we will always assume that 
\begin{itemize}
\item $\{I_r\}_{r\in \Lambda}$ and $\{J_r\}_{r\in \Lambda}$ are   families of measurable subsets of  $I$ and $J$, respectively,  with $\Lambda\subset \mathbb{N}$. 
\item The maps 
$$h_r\colon J_r \rightarrow I_r$$
are  $(s,p,a_r, \epsilon_r,\Crr{DGD1}^r, \Crr{DC1}^r,\Crr{DGD2}^r,\Crr{DC2}^r,\mathcal{G}_r)$-branches, with $|\epsilon_r|=\epsilon$ for every $r$.
\item  We have that 
$$\mathcal{A}= \{Q \in \mathcal{G}_r, \  Q\subset I_r, \text{ for some $r\in \Lambda$}\}\cup \{Q \in \mathcal{P}, \  Q\cap I_r=\emptyset, \text{ for every $r\in \Lambda$}\}.$$
generates the $\sigma$-algebra $\mathbb{A}$. 
\item We have that 
$$g_r\colon J_r \rightarrow I_r$$
are  $(\Crr{DRP}^r, \beta,\epsilon_r)$-potentials.\\
\item Let 
$$\Cll[namec]{DRS2}^r=\max \{ (\Crr{DC2}^r)^{\epsilon},(\Crr{DGD2}^r)^{1/p}\} < 1.$$
Then  $\Crr{DRS2}= \sup_r \Crr{DRS2}^r < 1.$
\end{itemize} 
 The ``$r$" in the notation $\Crr{DGD1}^r, \Crr{DC1}^r$, $\Crr{DGD2}^r$,$\Crr{DC2}^r$ and $\Crr{DRP}^r$ {\it is just an index}, indicating that those constants  may depend on $r$.
\end{minipage} 
}
\ \\ \\ 
The value
$$\Theta_r=(\Crr{DC1}^r)^{\epsilon} \Crr{DRP}^r  ( \Crr{DGD1}^r)^{1/p}  ( \Crr{DRS2}^r)^{a_r(1-\gamma)}$$
 measures how regular is the $r$-th pair $(h_r,g_r)$.\\

We want to  consider the  {\bf transfer transformation} 
$$\Phi(f)(x)= \sum_{r\in \Lambda} g_r(x)f(h_r(x)).$$
Notice that  when $\Lambda$ is an infinity set it is not even clear for which measurable functions $f$ the operator is well defined.  Let 
$$\Phi_r(f)(x)= g_r(x)f(h_r(x)).$$
We also assume \\ \\
\fcolorbox{black}{white}{
\begin{minipage}{\textwidth}
\noindent {\bf Assumption $\Cll[A]{A0}$.} There is  $\Cll{232}\geq0$ such that for every $f\in L^{t_0}(m)$
$$|\Phi f|_{L^{1}(\mu)}\leq \sum_r |\Phi_r(f)|_{L^{1}(\mu)}\leq \Crr{232}|f|_{L^{t_0}(m)}.$$
\end{minipage} 
}
\ \\ 
Section \ref{boundlp} provides  some methods  to obtain Assumption $\Crr{A0}$. Note that  Assumption $\Crr{A0}$  implies  that  $\Phi\colon L^{t_0}(m)\mapsto L^1(\mu)$ is a well defined and  bounded linear transformation, where $t_0$ is defined in (\ref{t0}).  Then  
$$p\leq t_0 < \frac{p}{1-sp}$$
and  Proposition \ref{inc} implies that  
$$\Phi\colon \mathcal{B}^s_{p,q}(I,\mathcal{P},m) \rightarrow L^{1}(\mu)$$ is a bounded linear transformation.

\section{Regular dynamical slicing}    We want give conditions for  $\Phi$ to be a well-defined linear transformation from $\mathcal{B}^s_{p,q}(I,m,\mathcal{P})$ to  $\mathcal{B}^s_{p,q}(J,\mu,\mathcal{H})$ and study its regularity. To this end  we need to connect the ``local" grid $\mathcal{G}_r$ on each  $(I_r,m)$ with the "global"  good grid $\mathcal{P}$ in $(I,m)$. This  will depend  on the data
$$\mathcal{I}=(s,p,q, \epsilon, \gamma,  \{(I_r,J_r, a_r,\Crr{DGD2}^r, \Crr{DC2}^r,\Crr{DC1}^r, \Crr{DRP}^r,\Crr{DGD1}^r,\mathcal{G}_r)\}_{r\in \Lambda}).$$
We call $\mathcal{I}$ a {\bf weighed family of sets}. Let 
\begin{equation}\label{ND} N=\sup_{P\in \mathcal{H}} \#\{ r\in \Lambda \ s.t. \  P\subset J_r \}.\end{equation}
We say that the pair
$$(\mathcal{I}, \sum_k \sum_{Q\in \mathcal{P}^k}  d_Q   a_Q)$$
has a  {\bf $\Cll[name]{DRS1}$-regular slicing}  if $\Crr{DRS1}\geq 0$ and  
\begin{itemize}
\item[i.] We have that 
$$f=\sum_k \sum_{Q\in \mathcal{P}^k}  d_Q   a_Q$$
is a $\mathcal{B}^s_{p,q}(I,m,\mathcal{P})$-representation of a function $f \in \mathcal{B}^s_{p,q}(I,m,\mathcal{P})$.
\item[ii.]   For every $r \in \Lambda$  there is a $\mathcal{B}^s_{p,q}(I_r,\mu,\mathcal{G}_r)$-representation of  $f\cdot 1_{I_r}$
\begin{equation}\label{posi} f\cdot 1_{I_r}= \sum_{Q\in \mathcal{G}_r, Q \subset I_r }  c_Q^r   a_Q,\end{equation}
 satisfying either
\begin{align}\label{hiip1} &\Big( \sum_j\big( \sum_r  \Theta_r   (\sum_{\substack{ Q\in \mathcal{G}^j_r \\  Q \subset I_r }}  |c_Q^r|^p )^{1/p}  \big)^{q}\Big)^{1/q} \\
& \leq \Crr{DRS1}  \big( \sum_k (\sum_{Q \in \mathcal{P}^k}    |d_Q|^p)^{q/p} \big)^{1/q}, \nonumber  \end{align}
or
\begin{align}\label{hiip2}&N^{1/p'} \Big( \sum_j\big( \sum_r \Theta_r^p \sum_{\substack{ Q\in \mathcal{G}^j_r \\  Q \subset I_r }}  |c_Q^r|^p   \big)^{q/p}\Big)^{1/q} \\
& \leq \Crr{DRS1}  \big( \sum_k (\sum_{Q \in \mathcal{P}^k}    |d_Q|^p)^{q/p} \big)^{1/q}. \nonumber \end{align}
Here $N^{1'}=1$.
\item[iii.] If $d_Q\geq 0$ for every $Q\in \mathcal{P}$ then we can choose $c_Q^r\geq 0$ for every $Q\in \mathcal{G}_r$. 
\end{itemize} 

\begin{table}[h]
  \centering
  \caption{Constants associated with the {\bf D}ynamics of the transfer operator}
  \label{tab:table2}
  \begin{tabular}{cc}
    \toprule
    Symbol & Description\\
    \midrule
    $\Crr{DGD1}^r,\Crr{DGD2}$& Describes the {\bf G}eometric {\bf D}eformation  \\
    & of the domains in the grid by the action of branches $h_r$\\
     $a_r$, $\Crr{DC1}^r$, $\Crr{DC2}$ & Describe the {\bf C}ontracting  properties of $h_r$ \\
    $\Crr{DRP}^r$& Describes the {\bf R}egularity of the {\bf P}otentials $g_r$ \\
    $\Crr{DRS1}$ & Controls the {\bf R}egularity of the dynamical  {\bf S}licing.\\
    \bottomrule
  \end{tabular}
\end{table}
\newpage
\vspace{1cm}
\centerline{ \bf III. STATEMENT OF RESULTS.}
\addcontentsline{toc}{chapter}{\bf III. STATEMENT OF RESULTS.}
\vspace{1cm}

\section{Boundeness on $\mathcal{B}^s_{p,q}$} \label{bsspq}

Our main technical result is 
\begin{theorem}[Key Technical Result]\label{key} Let
$$ \sum_k \sum_{Q\in \mathcal{P}^k} d_Q a_Q$$
be a $\mathcal{B}^s_{p,q}$-representation of a function $ f\in \mathcal{B}^s_{p,q}(I,m,\mathcal{P})$ such that $(\mathcal{I}, \sum_{Q\in \mathcal{P}} d_Q a_Q)$ has a  $\Crr{DRS1}$-regular slicing.  Define
$$\Cll[name]{D}=  \frac{2}{1-\Crr{DRS2}^{\gamma}}.$$
Then $\Phi(f)\in \mathcal{B}^s_{p,q}(J,\mu,\mathcal{H})$ and there  is a $\mathcal{B}^s_{p,q}$-representation 
$$\sum_k \sum_{Q\in \mathcal{H}^k} z_Q a_Q$$
of $\Phi(f)$ such that 
\begin{align*}
&\Big(  \sum_i \Big(  \sum_{Q\in \mathcal{H}^i} |z_Q|^p    \Big)^{q/p}  \Big)^{1/q} 
\leq \Crr{GBS}\Crr{D} \Crr{DRS1}   \big( \sum_k (\sum_{Q \in \mathcal{P}^k}    |d_Q|^p)^{q/p} \big)^{1/q}. \end{align*}
Moreover if the potentials $g_r$ are $\mathcal{B}^s_{p,q}(J,\mu,\mathcal{H}$-positive then whenever $d_Q\geq0 $ for every $Q$ we can choose $z_Q \geq 0$ for every $Q$.
\end{theorem}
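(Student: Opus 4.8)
The plan is to exploit the atomic-decomposition philosophy: write $f\cdot 1_{I_r}$ through its local Souza representation (\ref{posi}) provided by the regular slicing, push each local atom $a_Q$ (with $Q\in\mathcal{G}_r$, $Q\subset I_r$) through the corresponding $\Phi_r$, and then reassemble. So the first step is to understand $\Phi_r(a_Q)=g_r\cdot (a_Q\circ h_r)$. Since $a_Q$ is constant $|Q|^{s-1/p}$ on $Q$ and zero elsewhere, $a_Q\circ h_r=|Q|^{s-1/p}\,1_{h_r^{-1}(Q)}$, so $\Phi_r(a_Q)=|Q|^{s-1/p}\,g_r\cdot 1_{h_r^{-1}(Q)}$. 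Now cover $h_r^{-1}(Q)$ by the pieces $\mathcal{F}^k(h_r^{-1}(Q))\subset\mathcal{H}^k$ coming from the $(1-sp,\Crr{DGD1}^r,\Crr{DGD2}^r)$-regular domain structure (Property II of a branch), and on each such $W\in\mathcal{F}^k(h_r^{-1}(Q))$ invoke the regular-potential estimate (\ref{supe33}): $|g_r\cdot 1_W|_{\mathcal{B}^\beta_{p,q}(W,\mathcal{H}_W,\mathcal{A}^{sz})}\le \Crr{DRP}^r(|Q|/|h_r^{-1}Q|)^{1/p-s+\tilde\epsilon}|W|^{1/p-\beta}$. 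Thus $g_r\cdot 1_W$ divided by an appropriate scalar is a $(s,\beta,p,q)$-Besov atom supported on $W$ (after absorbing $\Crr{GBVA}$), which lets me write $\Phi_r(a_Q)=\sum_{k\ge k_0}\sum_{W\in\mathcal{F}^k(h_r^{-1}(Q))} e^r_{Q,W} b_W$ with $b_W\in\mathcal{A}^{bv}_{s,\beta,p,q}(W)$ and an explicit bound on the coefficients $e^r_{Q,W}$.

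The second step is to estimate those coefficients and sum them. Plugging in the atom height $|Q|^{s-1/p}$, the potential bound, and using $s<\beta$ together with the $\ell^p$-sum over $W\in\mathcal{F}^k(h_r^{-1}(Q))$ controlled by (\ref{dom}) with exponent $\alpha=1-sp$, the geometric factor $(\Crr{DGD2}^r)^{(k-k_0(h_r^{-1}(Q)))/p}$ appears; combined with the scaling control (\ref{tam}) and the definition $\Crr{DRS2}^r=\max\{(\Crr{DC2}^r)^\epsilon,(\Crr{DGD2}^r)^{1/p}\}$, one gets for each level a bound of the form (constants)$\cdot\Theta_r\cdot(\Crr{DRS2}^r)^{\text{something}\cdot\gamma}\cdot|c_Q^r|$ times a summable-in-$k$ geometric tail, where the geometric series in $(\Crr{DRS2}^r)^\gamma$ produces exactly the factor $\Crr{D}=2/(1-\Crr{DRS2}^\gamma)$. (The $(1-\gamma)$ exponent in $\Theta_r$ versus the leftover $\gamma$ in the tail is where the split between the contracting estimate and the geometric sum is balanced; this bookkeeping has to be done carefully but is routine.) After this, $\Phi(f)\cdot$(localization) $=\sum_r\Phi_r(f\cdot 1_{I_r})=\sum_r\sum_{Q\in\mathcal{G}_r,Q\subset I_r}c_Q^r\,\Phi_r(a_Q)$, and substituting the atomic expansion of each $\Phi_r(a_Q)$ gives a $\mathcal{B}^s_{p,q}(\mathcal{A}^{bv}_{s,\beta,p,q})$-representation of $\Phi(f)$ over $(J,\mu,\mathcal{H})$.

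The third step is to control the total $\ell^q(\ell^p)$ norm of the resulting Besov-atom coefficients. Grouping $W$'s by their level $i\in\mathbb{N}$ in $\mathcal{H}$, the coefficient attached to a given $W\in\mathcal{H}^i$ is a sum over $r$ with $W\subset J_r$ (at most $N$ terms, by (\ref{ND})) and over the $Q$'s with $h_r(W)\subset Q$ (bounded using $\Crr{mult1}$). Here I use one of the two slicing hypotheses: (\ref{hiip1}) goes directly with a triangle/Minkowski inequality in $\ell^q(\ell^p)$ (no $N$ factor), while (\ref{hiip2}) uses Hölder in $r$ with exponent $p'$ to pull out the $N^{1/p'}$ factor. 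Either way the outcome is $\big(\sum_i(\sum_{W\in\mathcal{H}^i}|\text{coeff}|^p)^{q/p}\big)^{1/q}\le \Crr{D}\,\Crr{DRS1}\,(\sum_k(\sum_Q|d_Q|^p)^{q/p})^{1/q}$ up to the geometric-grid constant. Finally, Proposition~\ref{besova} (From Besov to Souza representation) converts this $\mathcal{B}^s_{p,q}(\mathcal{A}^{bv}_{s,\beta,p,q})$-representation into a genuine Souza representation $\sum_i\sum_{Q\in\mathcal{H}^i}z_Q a_Q$, picking up exactly the extra factor $\Crr{GBS}$ and yielding the claimed inequality; since convergence in $\mathcal{B}^s_{p,q}$ implies convergence in $L^{t_0}\subset L^1$ and $\Phi$ is $L^{t_0}\to L^1$ bounded (Assumption~$\Crr{A0}$), the representation does converge to $\Phi(f)$, so $\Phi(f)\in\mathcal{B}^s_{p,q}(J,\mu,\mathcal{H})$.

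For the positivity statement, I would run the identical argument keeping track of signs: if $d_Q\ge 0$, clause (iii) of regular slicing gives $c_Q^r\ge 0$; if moreover each $g_r$ is a $\mathcal{B}^s_{p,q}$-positive regular potential, then the atoms $b_W$ produced above can be taken to be $(s,\beta,p,q)$-Besov positive atoms, all coefficients $e^r_{Q,W}$ are then $\ge 0$, and the positivity half of Proposition~\ref{besova} lets me choose $z_Q\ge 0$. The main obstacle I anticipate is the second step — matching the three geometric decay mechanisms (domain regularity exponent $1-sp$ via (\ref{dom}), the scaling control (\ref{tam}), and the $\beta$ versus $s$ gap in the potential estimate) so that all the $r$-dependent constants collapse into exactly $\Theta_r$ and the level-sum collapses into exactly $\Crr{D}$; getting the exponents $1/p$, $\epsilon$, $a_r$, $\gamma$ and $1-\gamma$ to land correctly is the delicate computational heart of the proof, whereas the reassembly and the invocation of Propositions~\ref{inc} and~\ref{besova} are comparatively mechanical.
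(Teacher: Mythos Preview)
Your proposal is correct and follows essentially the same three-step architecture as the paper: slice via (\ref{posi}), push local atoms through $\Phi_r$ using the regular-domain cover of $h_r^{-1}(Q)$ together with (\ref{supe33}) to produce Besov atoms, estimate the resulting $\ell^q(\ell^p)$ norm (splitting into the two cases (\ref{hiip1})/(\ref{hiip2})), and then convert back via Proposition~\ref{besova}. The one place where the paper is more careful than your sketch is the passage to the limit: rather than asserting that the Besov-atom series ``converges to $\Phi(f)$'', the paper reduces via a chain of approximations (finite $r_0$, finite $j_0$, finite $K$) to finite sums with uniformly bounded coefficients and then invokes the $L^p$-compactness of bounded sets in $\mathcal{B}^s_{p,q}(\mathcal{A}^{bv}_{\beta,p,q})$ (Proposition~\ref{compactness}) to identify the limit; your direct convergence claim needs this compactness step to be airtight. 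The geometric bookkeeping you flag as delicate is exactly the paper's Claim~6, namely $(\Crr{DRS2}^r)^{|j-k_0|+(i-k_0)}\le(\Crr{DRS2}^r)^{\gamma|j-i|+(1-\gamma)a_r}$, after which the level sum is a genuine convolution with the $\ell^1$ sequence $(\Crr{DRS2}^{\gamma|\cdot|})$, giving the factor $\Crr{D}$ via Young's inequality on $\ell^q$.
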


The proof of the following result is obvious.
\begin{corollary} Let $S$ be a linear subspace of  $\mathcal{B}^s_{p,q}(I,m,\mathcal{P})$. Suppose that for every $ f\in S$   there is a $\mathcal{B}^s_{p,q}$-representation
$$f= \sum_k \sum_{Q\in \mathcal{P}^k} d_Q a_Q$$
such that 
$$ \big( \sum_k (\sum_{Q \in \mathcal{P}^k}    |d_Q|^p)^{q/p} \big)^{1/q}\leq C |f|_{\mathcal{B}^s_{p,q}}$$
and $(\mathcal{I}, \sum_{Q\in \mathcal{P}} d_Q a_Q)$ has a  $\Crr{DRS1}$-regular slicing.  Then $$\Phi\colon S\rightarrow \mathcal{B}^s_{p,q}(J,\mu,\mathcal{H})$$ is a linear transformation satisfying 
$$|\Phi(f)|_{\mathcal{B}^s_{p,q}(J,\mu,\mathcal{H})}\leq  \Crr{GBS} \Crr{D}\Crr{DRS1} C   |f|_{\mathcal{B}^s_{p,q}(I,m,\mathcal{P})}.$$
for every $f\in S$.
\end{corollary}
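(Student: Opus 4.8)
The plan is to deduce the Corollary directly from the Key Technical Result (Theorem \ref{key}) applied element-by-element to functions in $S$. First I would fix $f\in S$ and invoke the hypothesis: there is a $\mathcal{B}^s_{p,q}$-representation $f=\sum_k\sum_{Q\in\mathcal{P}^k} d_Q a_Q$ with $\big(\sum_k(\sum_{Q\in\mathcal{P}^k}|d_Q|^p)^{q/p}\big)^{1/q}\le C|f|_{\mathcal{B}^s_{p,q}}$ and such that the pair $(\mathcal{I},\sum_{Q}d_Q a_Q)$ has a $\Crr{DRS1}$-regular slicing. By Theorem \ref{key}, $\Phi(f)\in\mathcal{B}^s_{p,q}(J,\mu,\mathcal{H})$ and there is a $\mathcal{B}^s_{p,q}$-representation $\sum_i\sum_{Q\in\mathcal{H}^i}z_Q a_Q$ of $\Phi(f)$ with
$$\Big(\sum_i\big(\sum_{Q\in\mathcal{H}^i}|z_Q|^p\big)^{q/p}\Big)^{1/q}\le \Crr{GBS}\,\Crr{D}\,\Crr{DRS1}\,\big(\sum_k(\sum_{Q\in\mathcal{P}^k}|d_Q|^p)^{q/p}\big)^{1/q}.$$

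Next I would pass from the bound on the coefficients $z_Q$ to a bound on the norm $|\Phi(f)|_{\mathcal{B}^s_{p,q}(J,\mu,\mathcal{H})}$: by the very definition of the Besov norm (\ref{norm}) as an infimum over representations, the norm is at most the $\ell^q(\ell^p)$-size of any particular representing sequence, so $|\Phi(f)|_{\mathcal{B}^s_{p,q}(J,\mu,\mathcal{H})}\le\big(\sum_i(\sum_{Q\in\mathcal{H}^i}|z_Q|^p)^{q/p}\big)^{1/q}$. Chaining the two displayed inequalities with the hypothesis $\big(\sum_k(\sum_{Q\in\mathcal{P}^k}|d_Q|^p)^{q/p}\big)^{1/q}\le C|f|_{\mathcal{B}^s_{p,q}}$ yields
$$|\Phi(f)|_{\mathcal{B}^s_{p,q}(J,\mu,\mathcal{H})}\le \Crr{GBS}\,\Crr{D}\,\Crr{DRS1}\,C\,|f|_{\mathcal{B}^s_{p,q}(I,m,\mathcal{P})}.$$

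Finally I would remark that linearity of $\Phi$ restricted to $S$ is automatic: $\Phi$ is already known to be a bounded (hence linear) operator from $\mathcal{B}^s_{p,q}(I,\mathcal{P},m)$ into $L^1(\mu)$ by Assumption $\Crr{A0}$ and Proposition \ref{inc}, and the content of the Corollary is merely that on the subspace $S$ its image actually lands in $\mathcal{B}^s_{p,q}(J,\mu,\mathcal{H})$ with the stated norm control — which is exactly what the estimate above gives for each $f\in S$. There is essentially no obstacle here: the work is entirely contained in Theorem \ref{key}, and the only thing to be careful about is that the conclusion is a statement about \emph{each} $f\in S$ individually (one does not need $S$ to be closed or $\Phi|_S$ to be defined via a limiting procedure), so no uniformity beyond the per-element estimate is required. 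This is why the authors call the proof obvious.
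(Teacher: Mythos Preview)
Your proposal is correct and is exactly the obvious argument the paper intends: the authors do not give a proof at all, stating only that ``the proof of the following result is obvious,'' and your derivation---apply Theorem \ref{key} to the hypothesized representation of each $f\in S$, then bound the Besov norm by the coefficient norm of the resulting representation---is precisely that obvious deduction.
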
 

 \section{Dynamical Slicing: How to do it}
 
 \begin{definition}\label{c2}  Let $S$ be a subspace of $ \mathcal{B}^s_{p,q}(I,m,\mathcal{P})$ and let $\mathcal{I}$ be a weighed family of sets, We say that $(S,\mathcal{I})$ has a $(\Cll[name]{DRSFR},\Cll[name]{DRSES})$-essential slicing, where $\Crr{DRSFR}, \Crr{DRSES}\geq 0$ if there is a {\it finite} subset $\mathcal{P}'\subset \mathcal{P}$  such that for every 
$\mathcal{B}^s_{p,q}$-representation $$\sum_{Q\in \mathcal{P}} d_Q  a_Q$$  of a function $f \in S$
the pair 
$$(\mathcal{I},\sum_{Q\in \mathcal{P} \setminus \mathcal{P}'} d_Q  a_Q ),$$
has a  $\Crr{DRSES}$-regular slicing
and the pair 
$$(\mathcal{I},\sum_{Q\in \mathcal{P}'} d_Q  a_Q ),$$
has a $\Crr{DRSFR}$-regular slicing.  Here  FR stands for ``{\bf F}inite-{\bf R}ank"  and ES for ``{\bf ES}sential spectral radius".
We say that $(S,\mathcal{I})$ has a $(\Crr{DRSFR},\Crr{DRSES},t)$-essential slicing if $\mathcal{P}'=\cup_{k< t} \mathcal{P}^k.$
\end{definition} 

Given $\mathcal{R}\subset \mathcal{P}$, define the closed subspace $\mathcal{B}^s_{p,q,\mathcal{R}} \subset \mathcal{B}^s_{p,q}$ as
$$\mathcal{B}^s_{p,q,\mathcal{R}}=\{ f\in  \mathcal{B}^s_{p,q}\colon \   k^f_P=0 \text{ for every } P\not\in   \mathcal{R}  \}.$$
Here $k^f_P$ is as in Proposition \ref{canrep2}.   Note that there is a linear projection
$$\pi_{\mathcal{G}}\colon \mathcal{B}^s_{p,q} \rightarrow \mathcal{B}^s_{p,q,\mathcal{R}}$$
satisfying $|\pi_{\mathcal{R}}|\leq \Crr{GC}$ and moreover $f=\pi_{\mathcal{R}}(f)+\pi_{\mathcal{P}\setminus \mathcal{R}}(f)$ for every $f \in  \mathcal{B}^s_{p,q}(I,m,\mathcal{P})$. Of course $\mathcal{B}^s_{p,q,\mathcal{R}}$ has finite dimension when $\mathcal{R}$ is finite.



 We left unanswered how to obtain a regular dynamical slicing  as assumed in  our main results on transfer operators,  as Theorem \ref{key} and Corollary \ref{ww}.  This section deals with this question.

\begin{definition} As defined in S. \cite{smania-besov}, a  set $\Omega\subset I$ is {\bf $(\alpha,\Cll{rp},t )$-strongly  regular domain }  if  for each  $Q\in \mathcal{P}^i$, with $i\geq t$  and $k\geq k_0(Q\cap \Omega)$ there is  a family $\mathcal{F}^k(Q\cap \Omega) \subset \mathcal{P}^k$  such that 
\begin{itemize}
\item[i.] We have $Q\cap \Omega = \cup_{k\geq k_0(Q\cap \Omega)} \cup_{P\in \mathcal{F}^k(Q\cap \Omega)} P$.
\item[ii.] If $P,W \in \cup_{k} \mathcal{F}^k(Q\cap \Omega)$ and $P\neq W$ then $P\cap W=\emptyset$. 
\item[iii.] We have
\begin{equation} \sum_{P\in \mathcal{F}^k(Q\cap \Omega)} |P|^{\alpha}\leq \Crr{rp}  |Q|^{\alpha}.\end{equation} 
\end{itemize}
\end{definition} 

in the next four results we will assume \\

\fcolorbox{black}{white}{
\begin{minipage}{\textwidth}
\noindent {\bf Assumption PLAIN.} For every $r\in \Lambda$ 
$$\mathcal{G}_r^k=\{ P\in \mathcal{P}^k\colon \ P\subset I_r  \}.$$
\end{minipage} 
}

\begin{theorem}[The Core I] \label{pa} Assume $\Crr{A000}-\Crr{A0}$ and PLAIN. There is $\Cll[name]{GSR}$, that depends only the good grid $\mathcal{P}$, with the following property.   Suppose that $\Lambda$ is finite and  there  is $t$ such that for  every  with $r\in \Lambda$ the set $I_r$  is a $(1-\beta p,\Cll{rp1f},t)$-strongly regular domain. Suppose
\begin{equation}\label{m} M=\sup_{\substack{P\in \mathcal{P}^k \\ k\geq t}}\# \{r\in \Lambda \colon I_r\cap P\neq \emptyset    \} < \infty.\end{equation}
\noindent Then  $(\mathcal{B}^s_{p,q},\mathcal{I})$ has a $(\Crr{GSR}\Crr{f2f},\Crr{GSR}\Crr{f1f},t)$-essential slicing, with
\begin{align*}\Cll{f1f}&=  M \Crr{rp1f}^{1/p} \big( \sum_{r \in \Lambda}   \Theta_r^{p'}  \big)^{1/p'} .
\end{align*}
and
\begin{align*}\Cll{f2f}&= (\#\Lambda)   (\Crr{rp1f}  \Crr{G1}^{-t})^{1/p}  \big( \sum_{r \in \Lambda}   \Theta_r^{p'}  \big)^{1/p'}.
\end{align*}
\end{theorem}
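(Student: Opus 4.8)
The plan is to unwind Definition~\ref{c2}: we must exhibit a finite set $\mathcal{P}'$ (here $\mathcal{P}'=\cup_{k<t}\mathcal{P}^k$) and, for every $\mathcal{B}^s_{p,q}$-representation $\sum_{Q\in\mathcal{P}} d_Q a_Q$ of $f$, show that the ``tail'' piece supported on $\mathcal{P}\setminus\mathcal{P}'$ has a $\Crr{GSR}\Crr{f1f}$-regular slicing and the ``head'' piece supported on $\mathcal{P}'$ has a $\Crr{GSR}\Crr{f2f}$-regular slicing. For each branch index $r$, under Assumption PLAIN the local grid $\mathcal{G}_r$ is just the restriction of $\mathcal{P}$ to $I_r$, so to build the representations~\eqref{posi} of $f\cdot 1_{I_r}$ I would start from the given representation of $f$ and \emph{re-slice each atom $a_Q$ along $I_r$}. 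Concretely, for $Q\in\mathcal{P}^k$ with $k\geq t$, the strong regularity of $I_r$ (with exponent $1-\beta p$) provides a family $\mathcal{F}^j(Q\cap I_r)\subset\mathcal{P}^j$ decomposing $Q\cap I_r$ with $\sum_{P\in\mathcal{F}^j(Q\cap I_r)}|P|^{1-\beta p}\leq \Crr{rp1f}|Q|^{1-\beta p}$; each $1_{P}\cdot a_Q$ is (a multiple of) a Besov atom supported on $P$, so Proposition~\ref{besova} converts this into a genuine Souza representation $\sum_P c_P^r a_P$ of $f\cdot 1_{I_r}$, at the cost of the constant $\Crr{GBS}$ — which I would absorb into $\Crr{GSR}$.

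The heart of the estimate is then to bound the slicing quantity~\eqref{hiip2} (I would use the version~\eqref{hiip2}, since the $\Theta_r^p$-sum together with the factor $N^{1/p'}$ is what produces the $(\sum_r\Theta_r^{p'})^{1/p'}$ term by H\"older). For the tail piece: fix a level $j\geq t$; for each $r$ the coefficients $c_P^r$ with $P\in\mathcal{P}^j$, $P\subset I_r$, come from re-slicing atoms $a_Q$ at levels $k\leq j$, and the key point is that a Souza atom $a_Q$ at level $k$ contributes to level $j$ coefficients bounded in $\ell^p$-norm by a geometric factor $\Crr{G1}^{(j-k)(\text{something})}$ times $|d_Q|$ — this is where condition~(iii) of strong regularity, $\sum_P|P|^{1-\beta p}\leq \Crr{rp1f}|Q|^{1-\beta p}$, combined with $0<s<\beta$, gives summable-in-$(j-k)$ control. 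Summing over $r$ one picks up at most a factor $M$ (from~\eqref{m}, the number of branches meeting a given $P$) in the $\ell^p$ count and, via H\"older in $r$ against the weights $\Theta_r$, the factor $(\sum_{r}\Theta_r^{p'})^{1/p'}$; the geometric series in $(j-k)$ is then resummed using a discrete Young/convolution inequality on the $\ell^{q/p}$ sequence spaces to recover $\big(\sum_k(\sum_{Q\in\mathcal{P}^k}|d_Q|^p)^{q/p}\big)^{1/q}$ on the right. This yields the constant $\Crr{f1f}=M\,\Crr{rp1f}^{1/p}\,(\sum_r\Theta_r^{p'})^{1/p'}$. For the head piece, $\mathcal{P}'$ is finite with at most $\#\Lambda$ relevant branches and all the involved $|Q|$ are bounded below by $\Crr{G1}^{t}$ (by G6), so the same re-slicing argument applies but now one may crudely bound the number of atoms by $\#\Lambda$ and extract the explicit $\Crr{G1}^{-t}$, giving $\Crr{f2f}=(\#\Lambda)(\Crr{rp1f}\,\Crr{G1}^{-t})^{1/p}(\sum_r\Theta_r^{p'})^{1/p'}$. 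Positivity (condition~(iii) of the regular slicing) is automatic: re-slicing $1_P a_Q$ produces nonnegative coefficients, and Proposition~\ref{besova} preserves nonnegativity.

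I expect the main obstacle to be the \emph{bookkeeping of the geometric decay across levels} when re-slicing: one has to track, for a fixed target level $j$ and branch $r$, how atoms $a_Q$ at all source levels $k$ redistribute mass onto level-$j$ subatoms of $Q\cap I_r$, show the redistribution matrix has entries decaying like a power of $\Crr{G2}$ or $\Crr{G1}$ in $|j-k|$ (using $s<\beta\le 1/p$ so that the exponent $\beta-s>0$ appears), and then invoke a Schur-test / discrete Young inequality on $\ell^{q/p}$ to absorb the convolution — all while threading the $\Theta_r$-weights through H\"older's inequality with conjugate exponents $p,p'$ so the two displayed constants come out exactly as stated. The constant $\Crr{GSR}$ is simply the product of the universal constants ($\Crr{GBS}$, the multiplicity constant $\Crr{mult1}$, and the Young-inequality constant) and depends only on $\mathcal{P}$, as claimed.
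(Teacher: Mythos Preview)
Your overall structure matches the paper's: split $f$ into the tail piece $f_1=\sum_{k\geq t}\sum_Q d_Q a_Q$ and the head piece $f_2=\sum_{k<t}\sum_Q d_Q a_Q$, then re-slice each along the domains $I_r$ using their strong regularity. The paper does not unpack the re-slicing by hand as you do; it invokes Proposition~\ref{kkey} (itself a consequence of a pointwise-multiplier result imported from \cite{smania-besov}) with weights $\alpha_r=1$ and $T=M$ for $f_1$, obtaining directly
\[
\Big(\sum_k\Big(\sum_r\sum_{\substack{Q\in\mathcal{P}^k\\Q\subset I_r}}|c_Q^r|^p\Big)^{q/p}\Big)^{1/q}\leq \Crr{GSR}\,M\,\Crr{rp1f}^{1/p}\Big(\sum_k\Big(\sum_Q|d_Q|^p\Big)^{q/p}\Big)^{1/q}.
\]
For $f_2$ it first observes that a $(1-\beta p,\Crr{rp1f},t)$-strongly regular domain is automatically $(1-\beta p,\Crr{rp1f}\Crr{G1}^{-t},0)$-strongly regular, then applies the same proposition with $T=\#\Lambda$. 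Your sketch of the geometric-decay bookkeeping and the conversion via Proposition~\ref{besova} is essentially what that black box encodes, so the substance of your plan is correct.

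The one concrete error is your choice of (\ref{hiip2}). The paper verifies (\ref{hiip1}), and the form of the constants forces this: once you have the display above, a single H\"older step in $r$,
\[
\sum_r\Theta_r\Big(\sum_{Q\subset I_r}|c_Q^r|^p\Big)^{1/p}\leq\Big(\sum_r\Theta_r^{p'}\Big)^{1/p'}\Big(\sum_r\sum_{Q\subset I_r}|c_Q^r|^p\Big)^{1/p},
\]
yields exactly the factor $(\sum_r\Theta_r^{p'})^{1/p'}$ --- and the left side here is precisely the inner expression in (\ref{hiip1}), not (\ref{hiip2}). Your claim that ``the $\Theta_r^p$-sum together with the factor $N^{1/p'}$ is what produces $(\sum_r\Theta_r^{p'})^{1/p'}$'' does not hold: (\ref{hiip2}) carries the prefactor $N^{1/p'}$ (with $N$ from (\ref{ND})), which is absent from $\Crr{f1f}$ and $\Crr{f2f}$, and there is no H\"older inequality turning $(\sum_r\Theta_r^p x_r)^{1/p}$ into $(\sum_r\Theta_r^{p'})^{1/p'}$ times something controlled by $M$ alone. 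Compare Theorem~\ref{pa2}, which \emph{does} go through (\ref{hiip2}) and whose constants accordingly involve $N^{1/p'}$ and $T=\sup_Q\sum_{I_r\cap Q\neq\emptyset}\Theta_r$ rather than $(\sum_r\Theta_r^{p'})^{1/p'}$. Switch to (\ref{hiip1}) and the rest of your outline goes through as written.
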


\begin{theorem}[The Core II] \label{pa2} Assume $\Crr{A000}-\Crr{A0}$ and PLAIN. There is $\Crr{GSR}$, that depends only the good grid $\mathcal{P}$, with the following property.   Suppose that  $\Lambda$ is  finite and  there  is $t$ such that for  every  with $r\in \Lambda$ the set $I_r$  is a $(1-\beta p,\Crr{rp1f},t)$-strongly regular domain, 
\begin{equation}\label{t} T=\sup_{\substack{Q\in \mathcal{P}^k\\k\geq t}} \sum_{\substack{Q\cap I _r\neq \emptyset \\ r\in \Lambda}} \Theta_r < \infty,\end{equation}
and
\begin{equation}\label{n} N=\sup_{P\in \mathcal{H}} \#\{ r\in \Lambda \ s.t. \  P\subset J_r \}< \infty.\end{equation}

\noindent Then  $(\mathcal{B}^s_{p,q},\mathcal{I})$ has a $(\Crr{GSR}\Crr{f2a},\Crr{GSR}\Crr{f1a},t)$-essential slicing, with
\begin{align*}\Cll{f1a}&=  N^{1/p'} \Crr{rp1f}^{1/p}T,
\end{align*}
and
\begin{align*}\Cll{f2a}&= N^{1/p'}  (\#\Lambda)(\sup_{r\in \Lambda} \Theta_r)   (\Crr{rp1f}  \Crr{G1}^{-t})^{1/p},
\end{align*}
with the obvious adaptation when $p=1$ (in particular we set  $N^{1/\infty}=1$). 
\end{theorem}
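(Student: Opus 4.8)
The plan is to prove Theorem \ref{pa2} (The Core II) by producing, for each $r\in\Lambda$, an explicit $\mathcal{B}^s_{p,q}(I_r,\mu,\mathcal{G}_r)$-representation of $f\cdot 1_{I_r}$ and then verifying inequality \eqref{hiip2} (the "$N^{1/p'}$" branch) for the tail part $\sum_{Q\in\mathcal{P}\setminus\mathcal{P}'}d_Q a_Q$ with constant $\Crr{GSR}\Crr{f1a}$, while the finite part $\sum_{Q\in\mathcal{P}'}d_Q a_Q$ with $\mathcal{P}'=\cup_{k<t}\mathcal{P}^k$ is handled by a crude bound giving $\Crr{GSR}\Crr{f2a}$. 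Because of Assumption PLAIN, $\mathcal{G}_r$ is just the restriction of $\mathcal{P}$ to $I_r$, so multiplying the given representation $f=\sum_k\sum_{Q\in\mathcal{P}^k}d_Q a_Q$ by $1_{I_r}$ essentially means intersecting each $Q$ with $I_r$; since $I_r$ is a $(1-\beta p,\Crr{rp1f},t)$-strongly regular domain, each set $Q\cap I_r$ (for $Q\in\mathcal{P}^k$, $k\ge t$) decomposes into a family $\mathcal{F}^\ell(Q\cap I_r)\subset\mathcal{P}^\ell$ with the key geometric estimate $\sum_{P\in\mathcal{F}^\ell(Q\cap I_r)}|P|^{1-\beta p}\le\Crr{rp1f}|Q|^{1-\beta p}$. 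The first step is therefore to rewrite $a_Q\cdot 1_{I_r}$ as a sum of Souza atoms $a_P$ over this family, tracking the coefficients, and to recall (via Proposition \ref{besova} and the material on regular domains) that this conversion costs at most a factor controlled by $\Crr{rp1f}^{1/p}$; this is where the exponent $1-\beta p$ matches the requirement $s<\beta<1/p$ that makes $1_\Omega$ act boundedly.

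Next I would assemble the slicing coefficients. For each $r$, the representation of $f\cdot 1_{I_r}$ has coefficients $c^r_Q$ obtained from the $d_Q$'s by the strong-regularity decomposition; the point is to estimate $\sum_r \Theta_r^p\sum_{Q\in\mathcal{G}_r^j,\,Q\subset I_r}|c^r_Q|^p$ level by level in $j$. Using the strong-regularity bound and Hölder's inequality one converts the double sum over $r$ and $P\in\mathcal{F}^j(Q\cap I_r)$ back into a single sum over $Q\in\mathcal{P}^k$ at the "parent" level $k$, picking up the factor $T=\sup_{Q\in\mathcal{P}^k,k\ge t}\sum_{Q\cap I_r\ne\emptyset}\Theta_r$ when one sums the weights $\Theta_r$ over the at-most-$T$ relevant branches, and the factor $N^{1/p'}$ from \eqref{n} when accounting for overlaps of the $J_r$ on the target side (this is the multiplicity that distinguishes the \eqref{hiip2} branch from \eqref{hiip1}). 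Summing the $q/p$-th powers over $j$ and taking the $1/q$-th root then yields $N^{1/p'}\Crr{rp1f}^{1/p}T\cdot(\sum_k(\sum_{Q\in\mathcal{P}^k}|d_Q|^p)^{q/p})^{1/q}$ up to the universal constant $\Crr{GSR}$ coming from $\mathcal{P}$ alone, which is exactly $\Crr{GSR}\Crr{f1a}$; the positivity clause (iii) is immediate since all the decompositions are positive when the $d_Q$ are.

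For the finite part I would use the trivial observation that $\mathcal{P}'=\cup_{k<t}\mathcal{P}^k$ is finite, that $|Q|\ge\Crr{G1}^k\ge\Crr{G1}^t$ for $Q\in\mathcal{P}^k$ with $k<t$ by repeated application of G6, and that there are at most $\#\Lambda$ branches; bounding every relevant $\Theta_r$ by $\sup_{r\in\Lambda}\Theta_r$ and absorbing the $|Q|^{-1}$-type losses into the $\Crr{G1}^{-t}$ factor gives the $\Crr{GSR}\Crr{f2a}=\Crr{GSR}N^{1/p'}(\#\Lambda)(\sup_r\Theta_r)(\Crr{rp1f}\Crr{G1}^{-t})^{1/p}$ bound for the $\Crr{DRSFR}$ (finite-rank) part. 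The main obstacle I anticipate is the bookkeeping in the middle step: one must carefully match the level $\ell$ of the children $P\in\mathcal{F}^\ell(Q\cap I_r)$ against the level $k$ of $Q$ and against the level $j$ appearing in \eqref{hiip2}, and ensure that the interchange of the sums over $r$, over $Q$, and over $\ell$ is legitimate and produces precisely the combination $N^{1/p'}\Crr{rp1f}^{1/p}T$ without hidden dependence on $\#\Lambda$ or on the depth — in other words, making the estimate genuinely uniform so that it controls the essential spectral radius rather than merely the norm. The analogous statement and proof structure in Theorem \ref{pa} (The Core I), which uses the $\ell^{p'}$-aggregation of the $\Theta_r$ instead, should serve as a template, with the difference localized in whether one applies Hölder to $\sum_r\Theta_r(\cdots)^{1/p}$ or works directly with $\sum_r\Theta_r^p(\cdots)$.
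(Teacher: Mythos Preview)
Your strategy matches the paper's: split the representation at level $t$, use the strong-regularity decomposition of $Q\cap I_r$ to build the $c_Q^r$'s, and verify \eqref{hiip2} for each piece. The paper packages the core estimate as Proposition~\ref{kkey} (which in turn invokes a pointwise-multiplier result from the companion paper) and applies it twice with weights $\alpha_r=\Theta_r$: once to $f_1=\sum_{k\ge t}\sum_Q d_Q a_Q$ to get $\Crr{f1a}$, and once to $f_2=\sum_{k<t}\sum_Q d_Q a_Q$ after observing that $(1-\beta p,\Crr{rp1f},t)$-strong regularity implies $(1-\beta p,\Crr{rp1f}\Crr{G1}^{-t},0)$-strong regularity, with the crude bound $T\le(\#\Lambda)\sup_r\Theta_r$ giving $\Crr{f2a}$. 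Your hands-on plan is essentially a re-derivation of Proposition~\ref{kkey}.

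Two points to correct. First, the factor $N^{1/p'}$ is \emph{not} produced by any estimate involving the $J_r$'s in this proof; it already sits on the left of \eqref{hiip2} by definition, and the paper simply carries it to the right-hand side, so $\Crr{DRS1}=N^{1/p'}\cdot\Crr{GSR}T\Crr{rp1f}^{1/p}=\Crr{GSR}\Crr{f1a}$. The H\"older step that actually uses $N$ lives in the proof of Theorem~\ref{key}, not here. Second, to obtain the factor $T=\sup_Q\sum_{r:Q\cap I_r\ne\emptyset}\Theta_r$ cleanly (rather than, say, $(\sum_r\Theta_r^p)^{1/p}$ or a bound depending on $\#\Lambda$), the paper does not estimate each $1_{I_r}$ separately: it applies the multiplier bound to $g\cdot\sum_r\Theta_r 1_{I_r}$, whose relevant supremum is exactly $T$, and then sets $c_Q^r=c_Q/\Theta_r$. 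That weighting trick is the content of Proposition~\ref{kkey} with $\alpha_r=\Theta_r$, and it is the step your outline leaves implicit.
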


\begin{theorem}[Tail I] \label{tt1} Assume $\Crr{A000}-\Crr{A0}$ and PLAIN. There is $\Cll[name]{GSR}$, that depends only the good grid $\mathcal{P}$, with the following property.  Suppose $\{ I_r \}$ is a family of pairwise disjoint  $(1-\beta p,\Cll{rp36},0)$-strongly regular domains. 
\noindent  If 
\begin{align*}\Cll{f2bb}&= \Crr{rp36}^{1/p}(\sum_{r\in \Lambda} \Theta_r),\end{align*}
 is finite then  $(\mathcal{B}^s_{p,q},\mathcal{I})$ has a $(\Crr{GSR}\Crr{f2bb})$-regular slicing.
\end{theorem}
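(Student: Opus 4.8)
Throughout write $\|(d_Q)\|:=\big(\sum_k(\sum_{Q\in\mathcal P^k}|d_Q|^p)^{q/p}\big)^{1/q}$. Unwinding the definition of a regular slicing, what I must show is: for \emph{every} $f\in\mathcal B^s_{p,q}(I,m,\mathcal P)$ and every $\mathcal B^s_{p,q}$-representation $f=\sum_k\sum_{Q\in\mathcal P^k}d_Q a_Q$, the pair $(\mathcal I,\sum_{Q\in\mathcal P}d_Q a_Q)$ has a $\Crr{GSR}\Crr{f2bb}$-regular slicing, where $\Crr{GSR}$ is a purely geometric constant — I expect $\Crr{GSR}=(1-\Crr{G2}^{\beta-s})^{-1}$ — depending only on the grid. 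Condition (i) in the definition is automatic, condition (iii) will be transparent from the construction, and condition (ii) is the whole content: I must produce, for each $r$, a representation of $f\cdot 1_{I_r}$ as in \reff{posi} satisfying \reff{hiip1}. Since the $I_r$ are pairwise disjoint and, under PLAIN, $\mathcal G_r^k=\{P\in\mathcal P^k\colon P\subset I_r\}$, the strategy is simply to restrict the given representation to $I_r$ atom by atom. Concretely: for $P\in\mathcal P^j$ with $P\subset I_r$ and $i\le j$ let $Q^P_i\in\mathcal P^i$ be the (unique, by G4–G5) ancestor of $P$; after fixing, for each pair $(r,W)$ with $W\cap I_r\neq\emptyset$, the families $\mathcal F^j(W\cap I_r)\subset\mathcal P^j$ granted by $(1-\beta p,\Crr{rp36},0)$-strong regularity, I would set
$$c^r_P:=\sum_{\substack{i\le j\\ P\in\mathcal F^{j}(Q^P_i\cap I_r)}}\Big(\frac{|P|}{|Q^P_i|}\Big)^{1/p-s}d_{Q^P_i}\qquad(P\in\mathcal P^j,\ P\subset I_r),$$
which is $\geq 0$ whenever all $d_Q\geq 0$; this already gives (iii).

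The heart of the argument is the bound on $b^{(r)}_j:=\big(\sum_{P\in\mathcal P^j,\,P\subset I_r}|c^r_P|^p\big)^{1/p}$. The plan is to apply the triangle inequality in $\ell^p$ over the ancestor level $i$, then for each fixed $i$ group the $P$'s by their level-$i$ ancestor $W\in\mathcal P^i$, and estimate
$$\sum_{P\in\mathcal F^{j}(W\cap I_r)}\Big(\frac{|P|}{|W|}\Big)^{1-sp}\ \le\ \Crr{G2}^{(j-i)(\beta-s)p}\sum_{P\in\mathcal F^{j}(W\cap I_r)}\Big(\frac{|P|}{|W|}\Big)^{1-\beta p}\ \le\ \Crr{G2}^{(j-i)(\beta-s)p}\,\Crr{rp36},$$
where the first inequality uses $|P|/|W|\leq\Crr{G2}^{j-i}$ together with $s<\beta$, and the second is the defining bound (iii) of the $(1-\beta p,\Crr{rp36},0)$-strongly regular domain $I_r$ (applicable since its parameter is $0\le i$). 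With $e_i:=(\sum_{Q\in\mathcal P^i}|d_Q|^p)^{1/p}$ this gives $b^{(r)}_j\leq\Crr{rp36}^{1/p}\sum_{i\le j}\Crr{G2}^{(j-i)(\beta-s)}e_i$, and Young's inequality $\ell^1*\ell^q\hookrightarrow\ell^q$ yields $\big(\sum_j(b^{(r)}_j)^q\big)^{1/q}\leq\Crr{GSR}\,\Crr{rp36}^{1/p}\,\|(d_Q)\|$ with $\Crr{GSR}=(1-\Crr{G2}^{\beta-s})^{-1}$, in particular finite. Since $|\sum_{P\in\mathcal P^j}c^r_P a_P|_p\leq\Crr{G2}^{js}b^{(r)}_j$, Hölder in $j$ gives $\sum_j|\sum_{P\in\mathcal P^j}c^r_P a_P|_p<\infty$, so $g^{(r)}:=\sum_j\sum_{P\subset I_r}c^r_P a_P$ converges absolutely in $L^p$ and is a genuine representation over $\{a_P:P\in\mathcal G_r\}$ with the stated coefficient bound. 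To identify $g^{(r)}$ with $f\cdot 1_{I_r}$ I would test against the indicators $1_W$, $W\in\mathcal P$ (which span a dense subspace of $L^{p'}$): using $(|P|/|Q|)^{1/p-s}|P|^{s-1/p}=|Q|^{s-1/p}$ and that $\bigsqcup_{j}\mathcal F^{j}(Q\cap I_r)$ partitions $Q\cap I_r$, both $\int_W g^{(r)}\,dm$ and $\int_{W\cap I_r}f\,dm$ reduce to $\sum_Q d_Q|Q|^{s-1/p}|Q\cap I_r\cap W|$, all interchanges of summation being legitimate by the absolute summability just established.

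It then remains to assemble the pieces. By Minkowski's inequality in $\ell^q$ (here $q\ge 1$) and the bound on $b^{(r)}_\bullet$,
$$\Big(\sum_j\Big(\sum_r\Theta_r\big(\textstyle\sum_{P\in\mathcal G_r^j}|c^r_P|^p\big)^{1/p}\Big)^q\Big)^{1/q}\ \le\ \sum_r\Theta_r\Big(\sum_j(b^{(r)}_j)^q\Big)^{1/q}\ \le\ \Crr{GSR}\,\Crr{rp36}^{1/p}\Big(\sum_r\Theta_r\Big)\,\|(d_Q)\|,$$
which is exactly \reff{hiip1} with $\Crr{DRS1}=\Crr{GSR}\,\Crr{rp36}^{1/p}\sum_r\Theta_r=\Crr{GSR}\Crr{f2bb}$, finite by hypothesis. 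Combined with (i) and (iii), this shows $(\mathcal B^s_{p,q},\mathcal I)$ has a $\Crr{GSR}\Crr{f2bb}$-regular slicing. The main obstacle is the middle paragraph: getting the ancestor bookkeeping for the $c^r_P$ right and making the rearrangements rigorous. The geometric gain $\Crr{G2}^{(j-i)(\beta-s)}$ — which is precisely what the strict inequality $s<\beta$ buys — is what makes every series converge and keeps $\Crr{GSR}$ purely geometric; disjointness of the $I_r$ is used only so that the restrictions $f\cdot 1_{I_r}$ and the local grids $\mathcal G_r$ fit together, the coefficient estimate itself being uniform in $r$.
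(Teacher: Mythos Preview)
Your proof is correct and follows the same two-step architecture as the paper: first, for each $r$ separately, establish the per-branch restriction estimate $\big(\sum_j(b_j^{(r)})^q\big)^{1/q}\le\Crr{GSR}\,\Crr{rp36}^{1/p}\|(d_Q)\|$; second, combine over $r$ via Minkowski in $\ell^q$ to obtain \eqref{hiip1} with constant $\Crr{GSR}\,\Crr{rp36}^{1/p}\sum_r\Theta_r=\Crr{GSR}\Crr{f2bb}$. The difference is in how the first step is carried out. The paper simply invokes Proposition~\ref{kkey} (applied to the singleton family $\{I_r\}$ with $\alpha_r=1$, $T=1$), which in turn cites an external multiplier/restriction lemma from \cite{smania-besov}. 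You instead unpack that black box: you build the coefficients $c_P^r$ explicitly from the families $\mathcal F^j(Q_i^P\cap I_r)$ supplied by strong regularity, and prove the bound by hand, the key being that the gap $\beta>s$ converts the $(1-\beta p)$-regularity estimate into a geometrically decaying convolution kernel $\Crr{G2}^{(j-i)(\beta-s)}$ to which Young's inequality applies. This buys you a self-contained argument and an explicit constant $\Crr{GSR}=(1-\Crr{G2}^{\beta-s})^{-1}$; note that this value depends on $\beta-s$ as well as on the grid, which is harmless since $s,\beta$ are fixed throughout by the standing assumptions, but is slightly weaker than the paper's phrasing ``depends only on the good grid''. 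The final Minkowski assembly is identical in both proofs.
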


\begin{theorem}[Tail II] \label{tt2} Assume $\Crr{A000}-\Crr{A0}$ and PLAIN. There is $\Cll[name]{GSR}$, that depends only the good grid $\mathcal{P}$, with the following property.  Suppose $\{ I_r \}_{r\in \Lambda}$ is a countable family of pairwise disjoint subsets  such that  the set
$$\Omega= \cup_{i\in \Lambda} I_r$$
is a $(1-\beta p,\Crr{rp36},0)$-strongly regular domain and additionally, if $Q\in \mathcal{P}$ and $Q\subset \Omega$ then $Q\subset I_r$, for some $r\in \Lambda$. 
Let  $$N=\sup_{P\in \mathcal{H}} \#\{ r\in \Lambda \ s.t. \  P\subset J_r \}.$$
\noindent If 
\begin{align*}\Cll{f2b}&= N^{1/p'} \Crr{rp36}^{1/p}(\sup_{r\in \Lambda} \Theta_r),\end{align*}
 is finite then  $(\mathcal{B}^s_{p,q},\mathcal{I})$ has a $(\Crr{GSR}\Crr{f2b})$-regular slicing. Here  we set  $N^{1/\infty}=1$ {\bf even} if $N=\infty$. 
\end{theorem}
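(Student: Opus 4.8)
The plan is to mirror the proof of Theorem \ref{tt1} (Tail I), using the hypothesis that $\Omega=\cup_r I_r$ is a single strongly regular domain together with the stated indecomposability condition ($Q\subset\Omega \Rightarrow Q\subset I_r$ for some $r$), but replacing the $\ell^{p'}$-type summation over $r$ by the argument that produces the factor $N^{1/p'}$, i.e. one uses inequality \reff{hiip2} rather than \reff{hiip1} in the definition of regular slicing. First I would fix a $\mathcal{B}^s_{p,q}$-representation $f=\sum_k\sum_{Q\in\mathcal{P}^k} d_Q a_Q$ of an arbitrary $f\in\mathcal{B}^s_{p,q}(I,m,\mathcal{P})$. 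Under Assumption PLAIN the local grids are $\mathcal{G}_r^k=\{P\in\mathcal{P}^k\colon P\subset I_r\}$, so a representation of $f\cdot 1_{I_r}$ should be built by restricting the atoms of $f$ to $I_r$: for $Q\in\mathcal{P}^k$ with $Q\subset I_r$ keep $d_Q$, and for $Q$ meeting $I_r$ but not contained in it, use the strong regularity of $I_r$ (inherited from that of $\Omega$, via the indecomposability hypothesis, which forces any $P\in\mathcal{P}$ inside $\Omega$ to lie in a single $I_r$) to expand $a_Q\cdot 1_{Q\cap I_r}$ into an atomic series $\sum_{k'\ge k}\sum_{P\in\mathcal{F}^{k'}(Q\cap I_r)} |P|^{s-1/p}\cdot(\pm)\,a_P$ with the $(1-\beta p)$-sum control $\sum_{P\in\mathcal{F}^{k'}}|P|^{1-\beta p}\le \Crr{rp36}|Q|^{1-\beta p}$. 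This is exactly the mechanism already used in \cite{smania-besov} to show strongly regular domains give bounded restriction operators on $\mathcal{B}^s_{p,q}$.

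The second step is the bookkeeping: I would collect the coefficients $c^r_P$ of the resulting representation \reff{posi} of $f\cdot 1_{I_r}$, weight them by $\Theta_r$, and estimate the left-hand side of \reff{hiip2}, namely $N^{1/p'}\big(\sum_j(\sum_r \Theta_r^p\sum_{P\in\mathcal{G}^j_r}|c^r_P|^p)^{q/p}\big)^{1/q}$. The key geometric inputs are: (a) each $P\in\mathcal{P}$ with $P\subset\Omega$ belongs to exactly one $I_r$, so the "interior" contributions $\sum_r\sum_{P\subset I_r}|d_P|^p$ do not overcount and are bounded by $(\sup_r\Theta_r)^p\sum_k\sum_{Q\in\mathcal{P}^k}|d_Q|^p$; (b) for a fixed $P\in\mathcal{P}^j$ appearing in the expansion of boundary atoms $a_Q$ with $Q\supsetneq$, the number of ancestors $Q$ is bounded by the grid multiplicity constant $\Crr{mult1}$, and $P\subset I_{r}$ for the unique relevant $r$, so again no overcounting in $r$ occurs; (c) the $\Crr{rp36}$-strong-regularity estimate lets one sum $|P|^{1-\beta p}$ against $|Q|^{1-\beta p}$ and, after the standard Hölder step converting the $\alpha=1-\beta p$ sum into an $\ell^p$ sum of coefficients (this is where the exponent $s-\beta<0$ and the geometric decay coming from $\Crr{G6}$ are used, exactly as in Tail I), reproduces $\big(\sum_k(\sum_{Q\in\mathcal{P}^k}|d_Q|^p)^{q/p}\big)^{1/q}$ up to a constant $\Crr{GSR}$ depending only on $\mathcal{P}$. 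Putting (a)–(c) together yields \reff{hiip2} with constant $\Crr{GSR}\,\Crr{rp36}^{1/p}(\sup_r\Theta_r)=\Crr{GSR}\Crr{f2b}$; since $N^{1/p'}$ was already inserted into $\Crr{f2b}$, the convention $N^{1/\infty}=1$ (even when $N=\infty$) is consistent because \reff{hiip2} only ever uses $N^{1/p'}$ as a multiplicative constant, which is $1$ when $p=1$.

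The third step is positivity: if $d_Q\ge 0$ for all $Q$, then in the boundary-atom expansions one must be able to choose the signs $(\pm)$ to be $+$; this is exactly the positivity clause already present in the strongly-regular-domain decomposition of \cite{smania-besov} (the family $\mathcal{F}^k(Q\cap I_r)$ gives a genuine partition of $Q\cap I_r$ into grid elements, so $a_Q\cdot 1_{Q\cap I_r}=\sum |P|^{s-1/p}a_P$ with nonnegative coefficients), hence $c^r_P\ge 0$ and condition (iii) of regular slicing holds. The main obstacle I anticipate is handling the infinite family $\Lambda$ cleanly: one must verify that the representation $\sum_r\sum_{P\in\mathcal{G}_r} c^r_P a_P$ (organized by level $j$ across all $r$ simultaneously) is genuinely a $\mathcal{B}^s_{p,q}$-type representation of each $f\cdot 1_{I_r}$ and that the triple-sum over $(r,j,P)$ converges absolutely in $L^p$ — this is precisely what finiteness of $\Crr{f2b}$ guarantees, but the order of summation and the use of $N$ to control how many $J_r$ a given $P\in\mathcal{H}$ can sit inside (relevant only later, when $\Phi$ is applied and one pushes forward by $h_r$) must be tracked carefully so that the estimate is uniform in the truncation of $\Lambda$; once that is in place, a limiting argument using the Banach space property of $\mathcal{B}^s_{p,q}$ (Proposition, first of the collected results) finishes the proof.
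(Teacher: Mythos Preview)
Your approach is correct in spirit but considerably more laborious than the paper's, because you open up the restriction machinery for each $I_r$ separately and then do bookkeeping to avoid overcounting. The paper instead applies Proposition~\ref{kkey} \emph{once} to the single-element family $\{\Omega\}$ (with $g=f$, $T=1$), obtaining a representation $f\cdot 1_\Omega=\sum_k\sum_{Q\in\mathcal{P}^k} c_Q a_Q$ with $\big(\sum_k(\sum_Q|c_Q|^p)^{q/p}\big)^{1/q}\le \Crr{GSR}\Crr{rp36}^{1/p}\big(\sum_k(\sum_Q|d_Q|^p)^{q/p}\big)^{1/q}$; then the indecomposability hypothesis says every $Q$ with $c_Q\neq 0$ lies in a unique $I_r$, so setting $c_Q^r=c_Q$ for that $r$ gives the representations of $f\cdot 1_{I_r}$ for free, and the hiip2 estimate is immediate: $\sum_r\Theta_r^p\sum_{Q\subset I_r}|c_Q^r|^p\le(\sup_r\Theta_r)^p\sum_Q|c_Q|^p$ with \emph{no} overcounting to track. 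Your points (a)--(c) are essentially re-proving Proposition~\ref{kkey} (equivalently, the restriction result from \cite{smania-besov}) level by level, which works but is redundant once that proposition is available as a black box applied to $\Omega$.

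Two minor remarks on your write-up: in (b), the bound on the number of ancestors of $P$ is not $\Crr{mult1}$ (that constant controls same-level overlaps) but rather one ancestor per level, and the summation over levels is handled by the geometric decay from G6, as you correctly say in (c); and the ``limiting argument'' you mention at the end is unnecessary here, since regular slicing is a pointwise/coefficient inequality and requires no Banach-space convergence.
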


\section{Essential Spectral Radius  of $\Phi$ acting on $\mathcal{B}^s_{p,q}$}


Consider the assumption
\vspace{5mm}

\fcolorbox{black}{white}{
\begin{minipage}{\textwidth}
\noindent {\bf Assumption $\Cll[A]{A1}.$}  We have $(J,\mu,\mathcal{H})=(I,m,\mathcal{P})$, and  $(\mathcal{B}^s_{p,q}(I,m,\mathcal{P}),\mathcal{I})$ has a $(\Crr{DRSFR},\Crr{DRSES})$-essential slicing.
\end{minipage} 
}
\vspace{5mm}

We stress we are not assuming  PLAIN anymore.

\begin{corollary}[Boundedness and Essential Spectrum Radius of $\Phi$] \label{ww} Suppose that $\Crr{A000}-\Crr{A1}$ hold. Then  the  transformation
 $\Phi$ is a bounded  operator acting on $\mathcal{B}^s_{p,q}$ satisfying 
$$|\Phi|_{\mathcal{B}^s_{p,q}}\leq \Crr{GBS} \Crr{D} (\Crr{DRSFR}+\Crr{DRSES})\Crr{GC} $$
and its essential espectral radius  is at most  $  \Crr{GBS} \Crr{D}    \Crr{DRSES}\Crr{GC} .$
Indeed
$$\Phi\circ \pi_{\mathcal{P}'}\colon \mathcal{B}^s_{p,q}\rightarrow \mathcal{B}^s_{p,q}$$
is a finite-rank linear  transformation with norm at most $  \Crr{GBS} \Crr{D} \Crr{DRSFR}\Crr{GC}$ and
$$\Phi\circ \pi_{\mathcal{P}\setminus \mathcal{P}'}\colon \mathcal{B}^s_{p,q}\rightarrow \mathcal{B}^s_{p,q}$$
is a  linear  transformation with norm at most $ \Crr{GBS} \Crr{D} \Crr{DRSES}\Crr{GC}$.
 \end{corollary}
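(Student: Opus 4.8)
The plan is to deduce this corollary directly from Theorem \ref{key} (the Key Technical Result) together with the definition of $(\Crr{DRSFR},\Crr{DRSES})$-essential slicing and the properties of the canonical representation from Proposition \ref{canrep2}. First I would take an arbitrary $f\in\mathcal{B}^s_{p,q}$ and use Proposition \ref{canrep2} to fix its canonical representation $f=\sum_k\sum_{P\in\mathcal{P}^k} k_P^f a_P$, which satisfies $\big(\sum_k(\sum_{P\in\mathcal{P}^k}|k_P^f|^p)^{q/p}\big)^{1/q}\leq\Crr{GC}|f|_{\mathcal{B}^s_{p,q}}$. By Assumption $\Crr{A1}$ there is a finite set $\mathcal{P}'\subset\mathcal{P}$ as in Definition \ref{c2}; split the canonical representation into the ``finite'' part $\sum_{P\in\mathcal{P}'} k_P^f a_P$ and the ``tail'' part $\sum_{P\in\mathcal{P}\setminus\mathcal{P}'} k_P^f a_P$, which are the canonical representations of $\pi_{\mathcal{P}'}(f)$ and $\pi_{\mathcal{P}\setminus\mathcal{P}'}(f)$ respectively (since $\pi_{\mathcal{P}'}$ zeroes out exactly the coefficients $k^f_P$ with $P\notin\mathcal{P}'$). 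The essential-slicing hypothesis says precisely that $(\mathcal{I},\sum_{P\in\mathcal{P}'}k_P^f a_P)$ has a $\Crr{DRSFR}$-regular slicing and $(\mathcal{I},\sum_{P\in\mathcal{P}\setminus\mathcal{P}'}k_P^f a_P)$ has a $\Crr{DRSES}$-regular slicing.

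Next I would apply Theorem \ref{key} twice. Applied to $\pi_{\mathcal{P}'}(f)$ with its $\Crr{DRSFR}$-regular slicing, it gives $\Phi(\pi_{\mathcal{P}'}(f))\in\mathcal{B}^s_{p,q}(J,\mu,\mathcal{H})$ with a representation whose coefficient norm is bounded by $\Crr{GBS}\Crr{D}\Crr{DRSFR}\big(\sum_k(\sum_{P\in\mathcal{P}^k}|k_P^f|^p)^{q/p}\big)^{1/q}\leq\Crr{GBS}\Crr{D}\Crr{DRSFR}\Crr{GC}|f|_{\mathcal{B}^s_{p,q}}$, hence $|\Phi(\pi_{\mathcal{P}'}(f))|_{\mathcal{B}^s_{p,q}}\leq\Crr{GBS}\Crr{D}\Crr{DRSFR}\Crr{GC}|f|_{\mathcal{B}^s_{p,q}}$. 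Applied to $\pi_{\mathcal{P}\setminus\mathcal{P}'}(f)$ with its $\Crr{DRSES}$-regular slicing it gives, in the same way, $|\Phi(\pi_{\mathcal{P}\setminus\mathcal{P}'}(f))|_{\mathcal{B}^s_{p,q}}\leq\Crr{GBS}\Crr{D}\Crr{DRSES}\Crr{GC}|f|_{\mathcal{B}^s_{p,q}}$. Since $f=\pi_{\mathcal{P}'}(f)+\pi_{\mathcal{P}\setminus\mathcal{P}'}(f)$ and $\Phi$ is linear, adding the two estimates yields $|\Phi f|_{\mathcal{B}^s_{p,q}}\leq\Crr{GBS}\Crr{D}(\Crr{DRSFR}+\Crr{DRSES})\Crr{GC}|f|_{\mathcal{B}^s_{p,q}}$, which is the boundedness claim; it also shows that $\Phi$ maps $\mathcal{B}^s_{p,q}$ into itself (here $(J,\mu,\mathcal{H})=(I,m,\mathcal{P})$ by $\Crr{A1}$).

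For the essential spectral radius I would use the standard characterization via a decomposition $\Phi=\Phi\circ\pi_{\mathcal{P}'}+\Phi\circ\pi_{\mathcal{P}\setminus\mathcal{P}'}$ into a finite-rank operator plus a ``small'' operator. The operator $\Phi\circ\pi_{\mathcal{P}'}$ factors through the finite-dimensional subspace $\mathcal{B}^s_{p,q,\mathcal{P}'}$ (finite-dimensional because $\mathcal{P}'$ is finite), so it is finite-rank, with norm at most $\Crr{GBS}\Crr{D}\Crr{DRSFR}\Crr{GC}$ by the bound just obtained; and $\Phi\circ\pi_{\mathcal{P}\setminus\mathcal{P}'}$ has norm at most $\Crr{GBS}\Crr{D}\Crr{DRSES}\Crr{GC}$. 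By the Nussbaum formula for the essential spectral radius (or Hennion's theorem: $\Phi$ equals a finite-rank — hence compact — operator plus an operator of norm $\leq\Crr{GBS}\Crr{D}\Crr{DRSES}\Crr{GC}$), the essential spectral radius of $\Phi$ acting on $\mathcal{B}^s_{p,q}$ is at most $\Crr{GBS}\Crr{D}\Crr{DRSES}\Crr{GC}$. The only mild subtlety — and the step to state carefully rather than a genuine obstacle — is checking that the canonical representation of $\pi_{\mathcal{P}'}(f)$ and of $\pi_{\mathcal{P}\setminus\mathcal{P}'}(f)$ is obtained from that of $f$ simply by restricting the index set, so that Definition \ref{c2} (which is phrased in terms of splitting a fixed $\mathcal{B}^s_{p,q}$-representation of $f$) applies with the canonical representation as input; this is immediate from the definition of $\mathcal{B}^s_{p,q,\mathcal{R}}$ and the projections $\pi_{\mathcal{R}}$ recalled just before the corollary.
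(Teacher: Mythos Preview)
Your argument is correct and follows essentially the same route as the paper's proof: use Proposition~\ref{canrep2} to get the canonical representation, restrict coefficients to $\mathcal{R}=\mathcal{P}'$ and $\mathcal{R}=\mathcal{P}\setminus\mathcal{P}'$, apply Theorem~\ref{key} with the $\Crr{DRSFR}$- and $\Crr{DRSES}$-regular slicings, and combine. The paper leaves the essential spectral radius conclusion implicit, whereas you spell out the finite-rank plus small-norm decomposition and invoke the Nussbaum/Hennion-type characterization, which is exactly the intended reasoning.
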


\section{Lasota-Yorke Inequality and its consequences} 
\label{lyorke} 

We have 

\begin{theorem}[Lasota-Yorke Inequality] \label{t1}Suppose that $\Crr{A000}-\Crr{A1}$  holds, $$  \Crr{GBS} \Crr{D}  \Crr{DRSES}\Crr{GC}   < 1$$ and
$$|\Phi(f)|_1\leq |f|_1$$
for every $f\in \mathcal{B}^s_{p,q}$.
Then $\Phi$ satisfies the Lasota-Yorke inequality 
$$|\Phi^n(f)|_{\mathcal{B}^s_{p,q}}\leq C |f|_1+(  \Crr{GBS} \Crr{D}  \Crr{DRSES}\Crr{GC} )^n|f|_{\mathcal{B}^s_{p,q}}.$$
for some $C\geq 0$ and every $f\in \mathcal{B}^s_{p,q}$.
\end{theorem}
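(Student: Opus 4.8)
The plan is to derive the Lasota-Yorke inequality by iterating the essential-slicing decomposition supplied by Assumption $\Crr{A1}$ together with the boundedness statement of Corollary \ref{ww}. First I would fix notation: write $\pi'=\pi_{\mathcal{P}'}$ and $\pi''=\pi_{\mathcal{P}\setminus \mathcal{P}'}$ for the two projections associated with the finite set $\mathcal{P}'$ from the essential slicing, so that $f=\pi'(f)+\pi''(f)$ and $\Phi=\Phi\pi'+\Phi\pi''$ on $\mathcal{B}^s_{p,q}$. By Corollary \ref{ww}, $L:=\Phi\pi'$ is a finite-rank operator of norm at most $\Crr{GBS}\Crr{D}\Crr{DRSFR}\Crr{GC}$, while $N:=\Phi\pi''$ has norm at most $\theta:=\Crr{GBS}\Crr{D}\Crr{DRSES}\Crr{GC}<1$. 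The goal is exactly the bound $|\Phi^n f|_{\mathcal{B}^s_{p,q}}\le C|f|_1+\theta^n|f|_{\mathcal{B}^s_{p,q}}$.

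The key step is the standard telescoping identity $\Phi^n=N^n+\sum_{j=0}^{n-1}\Phi^{j}L\,\Phi^{n-1-j}$; equivalently one writes $\Phi^n=N^n+\sum_{j=0}^{n-1}N^{j}L\Phi^{n-1-j}$ after one more round of splitting $\Phi=N+L$ inside each term, the precise bookkeeping being whichever of the two is cleanest. Applying $|\cdot|_{\mathcal{B}^s_{p,q}}$ and the norm bounds, the $N^n$ term contributes $\theta^n|f|_{\mathcal{B}^s_{p,q}}$, which is the second term we want. For the remaining sum, each summand has the form $(\text{operator of norm}\le\theta^{j})\circ L\circ\Phi^{n-1-j}$, and here is where finite rank enters: $L=\Phi\pi'$ has image in the finite-dimensional space $\mathcal{B}^s_{p,q,\mathcal{P}'}$, on which the $\mathcal{B}^s_{p,q}$-norm and the $L^1$-norm are equivalent, say $|g|_{\mathcal{B}^s_{p,q}}\le C_1|g|_1$ for $g$ in that finite-dimensional space. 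Hence $|L(\Phi^{n-1-j}f)|_{\mathcal{B}^s_{p,q}}\le C_1|L(\Phi^{n-1-j}f)|_1\le C_1\|L\|_{L^1\to L^1}|\Phi^{n-1-j}f|_1\le C_2|f|_1$, using the hypothesis $|\Phi g|_1\le|g|_1$ iterated (and noting $L$ extends boundedly to $L^1$ since $\Phi$ does and $\pi'$ is an $L^1$-bounded projection onto a finite-dimensional subspace — this needs a line). Summing over $j$ and using $\sum_{j\ge0}\theta^j=1/(1-\theta)<\infty$ gives $\sum_{j=0}^{n-1}\theta^{j}C_2|f|_1\le \frac{C_2}{1-\theta}|f|_1$, so one may take $C=C_2/(1-\theta)$.

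I expect the only genuinely delicate point to be the justification that $L=\Phi\circ\pi_{\mathcal{P}'}$ extends to a bounded operator $L^1\to L^1$ and that the equivalence $|g|_{\mathcal{B}^s_{p,q}}\le C_1|g|_1$ holds uniformly on its image. The latter is immediate because $\mathcal{B}^s_{p,q,\mathcal{P}'}$ is finite-dimensional (all norms equivalent), with $\mathrm{rank}\le\dim\mathcal{B}^s_{p,q,\mathcal{P}'}$; the former follows because $\pi_{\mathcal{P}'}$ is given by the functionals $k^f_P\in(L^1)^\star$ from Proposition \ref{canrep2}, so $\pi_{\mathcal{P}'}f=\sum_{P\in\mathcal{P}'}k_P^f a_P$ is visibly $L^1$-bounded as a finite sum of bounded functionals times fixed atoms, and $\Phi$ is $L^1$-bounded by the remark after Assumption $\Crr{A5}$. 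One should also be slightly careful that the iteration $|\Phi^{n-1-j}f|_1\le|f|_1$ uses only the contraction hypothesis on $L^1$, which is exactly assumed. Everything else is the routine telescoping estimate, so I would present it compactly, emphasizing the finite-rank/norm-equivalence trade that converts a $\mathcal{B}^s_{p,q}$-estimate into an $L^1$-estimate.
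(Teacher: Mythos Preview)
Your approach is correct and is essentially the paper's proof unfolded: the paper first establishes the one-step inequality $|\Phi f|_{\mathcal{B}^s_{p,q}} \le C_1|f|_1 + \theta|f|_{\mathcal{B}^s_{p,q}}$ (by bounding $|\Phi\pi_{\mathcal{P}'}(f)|_{\mathcal{B}^s_{p,q}}$ directly in terms of $|f|_1$, using that the finitely many coefficients $k_P^f$ lie in $(L^1)^\star$ and then applying Theorem~\ref{key}) and iterates it with $|\Phi f|_1\le|f|_1$, whereas your telescoping identity $\Phi^n=N^n+\sum_{j=0}^{n-1}N^jL\Phi^{n-1-j}$ is exactly that iteration written out. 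One minor slip: the image of $L=\Phi\pi_{\mathcal{P}'}$ is $\Phi(\mathcal{B}^s_{p,q,\mathcal{P}'})$, not $\mathcal{B}^s_{p,q,\mathcal{P}'}$; this is harmless since you only use finite rank, but note that the paper's route avoids the detour through norm-equivalence on $\mathrm{Im}\,L$ altogether by bounding $|\pi_{\mathcal{P}'}(g)|_{\mathcal{B}^s_{p,q}}\le C''|g|_1$ first and then applying the $\mathcal{B}^s_{p,q}$-boundedness of $\Phi$.
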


Consider

\vspace{5mm}
\fcolorbox{black}{white}{
\begin{minipage}{\textwidth}
\noindent {\bf Assumption $\Cll[A]{A5}.$}  We have
$$6  \Crr{GBS} \Crr{D}   \Crr{DRSES}\Crr{GC}   < 1.$$ 
Moreover the potentials satisfy    $g_r\geq 0$ $m$-almost everywhere,  and
$$\int \Phi(f) \ dm= \int f \ dm$$
for every $f\in \mathcal{B}^s_{p,q}(I,m,\mathcal{P})$. \end{minipage} 
}

\ \\ \\
Note that Assumption $\Crr{A5}$ implies that $|\Phi(f)|_1\leq |f|_1$ for every $f\in \mathcal{B}^s_{p,q}$. Since $\mathcal{B}^s_{p,q}$ is dense in $L^1$  we can extend $\Phi$ to a bounded linear operator $\Phi\colon L^1 \rightarrow L^1$. The multiplicative factor $6$ is only needed in the proof of Corollary \ref{asip1}.



\begin{corollary}\label{spec} Suppose that  $\Crr{A000}-\Crr{A5}$  hold. Then 
\begin{itemize} 
\item[A.] There exists $\delta \in (0,1)$ such that    $$E= \sigma_{\mathcal{B}^s_{p,q}}(\Phi)\cap \{z \in \mathbb{C}\colon\ |z| \geq \delta  \}$$ is finite and nonempty and it is contained  in $\mathbb{S}^1$.  Every element $\lambda \in E$ is an eigenvalue with finite-dimensional eigenspace  and 
$$(\Phi-\lambda I)^2f=0 \ implies \ (\Phi-\lambda I)f=0$$
for every $f\in \mathcal{B}^s_{p,q}$
\item[B.] For every $\lambda \in E$ there is a bounded projection $\pi_\lambda$, and there is  a linear contraction $\tilde{\Theta}$, both of them acting on $\mathcal{B}^s_{p,q}$, such that
$$\Phi^n = \sum_{\lambda \in E} \lambda^n \pi_\lambda + \tilde{\Theta}^n$$
and
$$\Phi \pi_\lambda = \lambda \pi_\lambda, \  \pi_{\lambda'}\pi_\lambda = 0,\   \tilde{\Theta}\pi_\lambda = \pi_\lambda \tilde{\Theta}=0  $$
for every $\lambda',\lambda \in E$, $\lambda \neq \lambda'$. 
In particular $\sup_n |\Phi^n|_{\mathcal{B}^s_{p,q}} < \infty$.
\item[C.] There is  $\rho \in \mathcal{B}^s_{p,q}$, with $\rho \geq 0$, such that  
\begin{equation}\label{co}  \int \rho \ dm=1\  and \ \Phi(\rho)=\rho.\end{equation} 
\item[D.] If $\lambda \in \mathbb{S}^1$ and $u \in L^1$ satisfies $\Phi u = \lambda u$ then $u\in \mathcal{B}^s_{p,q}$. Moreover
$\Phi(|u|)=|u|$ and  $u_k = (sgn \ u)^k |u|$ satisfies $\Phi u_k =\lambda^k u_k$ for every $k \in \mathbb{Z}$. 
\item[E.] Every element of the finite set $E$ is a $n$-th root of unit, for  some $n \in \mathbb{N}^\star$. 
\end{itemize}
\end{corollary}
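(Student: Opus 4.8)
The plan is to obtain Corollary~\ref{spec} from the Lasota--Yorke inequality of Theorem~\ref{t1} and the essential-spectral-radius bound of Corollary~\ref{ww} by the classical Riesz / Ionescu-Tulcea--Marinescu quasi-compactness machinery, together with the positivity of $\Phi$ (the $g_r$ are $\ge 0$) and the preservation of the integral. Write $\eta=\Crr{GBS}\,\Crr{D}\,\Crr{DRSES}\,\Crr{GC}$; by Assumption~$\Crr{A5}$ we have $\eta<1$ and $|\Phi f|_{1}\le|f|_{1}$, so $\Phi$ extends to a bounded operator on $L^{1}$ with $|\Phi|_{L^{1}}\le1$. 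Corollary~\ref{ww} gives that $\Phi$ is bounded on $\mathcal{B}^s_{p,q}$ with essential spectral radius $\le\eta$, so $\sigma_{\mathcal{B}^s_{p,q}}(\Phi)\cap\{|z|>\eta\}$ is a finite set of eigenvalues with finite-dimensional generalized eigenspaces; meanwhile Theorem~\ref{t1}, together with the continuous embedding $\mathcal{B}^s_{p,q}\hookrightarrow L^{1}$ (Proposition~\ref{inc}, since $p\ge1$), yields $\sup_{n}|\Phi^{n}|_{\mathcal{B}^s_{p,q}}<\infty$, hence spectral radius $\le1$.

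I would prove item~C first, since it also supplies the nonemptiness needed in~A. As $m(I)=1$, the constant function $1=a_{I}$ lies in $\mathcal{B}^s_{p,q}$; the Ces\`aro averages $\rho_{n}=\tfrac1n\sum_{k<n}\Phi^{k}1$ satisfy $\rho_{n}\ge0$, $\int\rho_{n}\,dm=1$, and, by Theorem~\ref{t1}, $\sup_{n}|\rho_{n}|_{\mathcal{B}^s_{p,q}}<\infty$. By the compactness of bounded subsets of $\mathcal{B}^s_{p,q}$ in $L^{1}$ (Section~\ref{space}) a subsequence converges in $L^{1}$ to some $\rho$; since $\Phi\rho_{n}-\rho_{n}=\tfrac1n(\Phi^{n}1-1)\to0$ in $L^{1}$ and $\Phi$ is $L^{1}$-continuous, $\Phi\rho=\rho$, with $\rho\ge0$, $\int\rho\,dm=1$, and $\rho\in\mathcal{B}^s_{p,q}$ because bounded balls of $\mathcal{B}^s_{p,q}$ are closed in $L^{1}$. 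Thus $1$ is an eigenvalue, so the peripheral spectrum is nonempty; choosing $\delta\in(\eta,1)$ close enough to $1$ makes $E=\sigma_{\mathcal{B}^s_{p,q}}(\Phi)\cap\{|z|\ge\delta\}$ coincide with $\sigma_{\mathcal{B}^s_{p,q}}(\Phi)\cap\mathbb{S}^{1}$, a finite nonempty set of eigenvalues with finite-dimensional eigenspaces (giving all of~A except the absence of Jordan blocks). That $(\Phi-\lambda I)^{2}f=0\Rightarrow(\Phi-\lambda I)f=0$ for $\lambda\in E$ follows from $\sup_{n}|\Phi^{n}|_{\mathcal{B}^s_{p,q}}<\infty$: a nontrivial block at $\lambda\in\mathbb{S}^{1}$ would force $|\Phi^{n}f|_{\mathcal{B}^s_{p,q}}$ to grow linearly in $n$. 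Item~B is then the standard Riesz functional calculus: let $\pi_{\lambda}$ be the Riesz projection around $\lambda\in E$, set $\tilde{\Theta}=\Phi\circ(I-\sum_{\lambda\in E}\pi_{\lambda})$, whose spectrum lies in a disc of radius $<1$ (so, after passing to an equivalent norm on $\mathcal{B}^s_{p,q}$, a strict contraction), and read off $\Phi^{n}=\sum_{\lambda\in E}\lambda^{n}\pi_{\lambda}+\tilde{\Theta}^{n}$ along with the stated commutation relations and the uniform bound on $|\Phi^{n}|$.

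The crux is item~D. First I would prove a regularization lemma: each $\pi_{\lambda}$, $\lambda\in E$, extends to a bounded operator $L^{1}\to\mathcal{B}^s_{p,q}$. From $\Phi^{n}=\sum_{\mu\in E}\mu^{n}\pi_{\mu}+\tilde{\Theta}^{n}$ one gets, for $f\in\mathcal{B}^s_{p,q}$, the mean-ergodic identity $\pi_{\lambda}f=\lim_{N}\tfrac1N\sum_{n<N}\lambda^{-n}\Phi^{n}f$ in $\mathcal{B}^s_{p,q}$; estimating the $\mathcal{B}^s_{p,q}$-norm of these Ces\`aro sums termwise by Theorem~\ref{t1} gives $|\pi_{\lambda}f|_{\mathcal{B}^s_{p,q}}\le C|f|_{1}$, and density of $\mathcal{B}^s_{p,q}$ in $L^{1}$ extends $\pi_{\lambda}$ as claimed. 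Now let $u\in L^{1}$ with $\Phi u=\lambda u$, $|\lambda|=1$. Approximating $u$ by $u_{j}\in\mathcal{B}^s_{p,q}$ in $L^{1}$, and using that the operators $A_{N}=\tfrac1N\sum_{n<N}\lambda^{-n}\Phi^{n}$ are uniformly bounded on $L^{1}$ while $A_{N}u=u$ for all $N$ and $A_{N}u_{j}\to\pi_{\lambda}u_{j}$ in $\mathcal{B}^s_{p,q}$, a limit-exchange (legitimate because $|A_{N}u_{j}-A_{N}u_{k}|_{1}\le|u_{j}-u_{k}|_{1}$ uniformly in $N$) gives $u=\pi_{\lambda}u\in\mathcal{B}^s_{p,q}$. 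Next, $|u|=|\Phi u|\le\Phi|u|$ (triangle inequality plus $g_{r}\ge0$) and $\int\Phi|u|\,dm=\int|u|\,dm$ force $\Phi|u|=|u|$; the pointwise equality case pins the common argument of the branch contributions, yielding $\operatorname{sgn}(u\circ h_{r})(x)=e^{i\theta(x)}$ with $\lambda\operatorname{sgn}u(x)=e^{i\theta(x)}$, whence a direct computation gives $\Phi u_{k}=\lambda^{k}u_{k}$ for $u_{k}=(\operatorname{sgn}u)^{k}|u|$, $k\in\mathbb{Z}$, and $u_{k}\in\mathcal{B}^s_{p,q}$ by the first part of~D. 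Finally, item~E: by~D we have $\{\lambda^{k}:k\in\mathbb{Z}\}\subset E$ (each $u_{k}\ne0$), and $E$ being finite forces $\lambda$ to have finite order in $\mathbb{S}^{1}$.

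The main obstacle is precisely the inclusion $u\in\mathcal{B}^s_{p,q}$ in item~D: the transfer operator is not smoothing, so this cannot be seen directly and has to be extracted from the Lasota--Yorke inequality via the regularizing bound on the spectral projections and the limit-exchange argument above. Everything else is routine bookkeeping with Riesz's spectral theory once Theorem~\ref{t1}, Corollary~\ref{ww}, and the compact embedding $\mathcal{B}^s_{p,q}\hookrightarrow L^{1}$ are in hand.
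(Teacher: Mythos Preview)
Your proposal is correct and follows essentially the same route as the paper: quasi-compactness from Corollary~\ref{ww}, power-boundedness from the Lasota--Yorke inequality (Theorem~\ref{t1}) to rule out Jordan blocks and locate the peripheral spectrum on $\mathbb{S}^1$, Ces\`aro averages plus compactness of the unit ball in $L^1$ for the fixed density, and the positivity/equality-case argument for the $\operatorname{sgn}$ relations in~D and the root-of-unity conclusion in~E.

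The only noticeable difference is in the regularization step of~D. You first prove that each spectral projection $\pi_\lambda$ extends to a bounded map $L^1\to\mathcal{B}^s_{p,q}$ (via the mean-ergodic identity $\pi_\lambda f=\lim_N\tfrac1N\sum_{n<N}\lambda^{-n}\Phi^n f$ and the Lasota--Yorke bound), and then recover $u=\pi_\lambda u\in\mathcal{B}^s_{p,q}$ by a limit-exchange. The paper instead argues directly: pick $u_i\in\mathcal{B}^s_{p,q}$ with $u_i\to u$ in $L^1$, choose $n_i$ large enough that Lasota--Yorke gives $|\Phi^{n_i}u_i|_{\mathcal{B}^s_{p,q}}\le 2C|u|_1$, observe $\lambda^{-n_i}\Phi^{n_i}u_i\to u$ in $L^1$, and invoke $L^1$-compactness of bounded balls of $\mathcal{B}^s_{p,q}$. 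Your version is a bit more structured (and yields the useful side fact that $\pi_\lambda$ is $L^1\to\mathcal{B}^s_{p,q}$ bounded), while the paper's is slightly shorter since it does not need to establish~B first; both are standard and equivalent in strength.
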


 \begin{corollary} \label{phy} Suppose $\Crr{A000}-\Crr{A5}$. Then $T$ has  at most $N$  physical  measures, where $N$ is the dimension of the $1$-eigenspace of $\Phi$. Moreover all these measures are absolutely continuous with respect to $m$, and the basin of attraction of these measures covers $I$.
 \end{corollary}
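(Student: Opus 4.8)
The plan is to derive Corollary \ref{phy} from the spectral decomposition in Corollary \ref{spec}, which already gives us everything structural we need. First I would recall that Corollary \ref{spec}(C) produces a non-negative fixed density $\rho$ with $\int \rho\, dm = 1$, so the measure $\rho\, dm$ is an absolutely continuous invariant probability; and Corollary \ref{spec}(B) gives the decomposition $\Phi^n = \sum_{\lambda \in E} \lambda^n \pi_\lambda + \tilde\Theta^n$ with $\tilde\Theta$ a contraction (so $\tilde\Theta^n \to 0$ in norm on $\mathcal{B}^s_{p,q}$, since $\Crr{GBS}\Crr{D}\Crr{DRSES}\Crr{GC}<1$ bounds the spectral radius of $\tilde\Theta$ away from $1$ — more precisely its spectral radius is $<1$). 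The key reduction is a standard Cesàro/averaging argument: for $f \in \mathcal{B}^s_{p,q}$ with $f \geq 0$ and $\int f\, dm = 1$, the averages $\frac{1}{n}\sum_{k=0}^{n-1}\Phi^k f$ converge in $\mathcal{B}^s_{p,q}$ (hence in $L^1$) to $\pi_1 f$, because the contribution of each $\lambda \in E \setminus \{1\}$ averages to zero (as $\lambda$ is a root of unity $\neq 1$ by part (E)) and $\tilde\Theta^n$ averages to zero. Thus $\pi_1 f \geq 0$, $\int \pi_1 f\, dm = 1$, and $\pi_1 f$ lies in the $1$-eigenspace, which has dimension $N$.

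Next I would set up the structure of physical measures. Write $V = \mathrm{Im}\,\pi_1$, the $N$-dimensional space of fixed densities. The cone $V^+ = \{ f \in V : f \geq 0 \}$ is a finite-dimensional closed convex cone, invariant and actually equal to $\pi_1$ applied to non-negative $L^1$ densities. Decompose $V$ via the ergodic decomposition of the invariant measures it carries: the extreme rays of $V^+$ correspond to densities $\rho_1, \dots, \rho_\ell$ ($\ell \leq N$) of mutually singular ergodic absolutely continuous invariant probabilities $m_j = \rho_j\, dm$ — this is the classical fact (cf. Lasota–Yorke theory) that an abstract finite-dimensional space of fixed densities of a positive $L^1$-contraction decomposes into finitely many ergodic components whose supports are essentially disjoint. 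Each $m_j$ is a physical measure: for Lebesgue-a.e. (i.e. $m$-a.e.) $x$ in the basin, the Birkhoff averages $\frac{1}{n}\sum_{k=0}^{n-1} \delta_{F^k x}$ converge to some $m_j$. To see the basin covers $I$: take $f = 1_I$ (the constant density $1$, which is in $\mathcal{B}^s_{p,q}$ since $a_I$ is a Souza atom and $|I| = 1$); then $\frac{1}{n}\sum_{k<n} \Phi^k 1 \to \pi_1 1 =: \rho_\infty \geq 0$ in $L^1$, with $\int \rho_\infty\, dm = 1$, and $\rho_\infty = \sum_j w_j \rho_j$ with $w_j \geq 0$, $\sum w_j = 1$. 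Since $\Phi$ is the transfer operator and $m$ is $\Phi$-invariant in the sense $\int \Phi f\, dm = \int f\, dm$, duality gives $\int (\Phi^k 1)\, \psi\, dm = \int \psi \circ F^k\, dm$ for bounded $\psi$, so the convergence $\frac{1}{n}\sum_{k<n} \psi\circ F^k \to \sum_j w_j \int \psi\, dm_j$ holds in $L^1(m)$, hence $m$-a.e. along a subsequence; a standard argument upgrading $L^1$ convergence of Birkhoff averages (using that the limit is $F$-invariant and the ergodic decomposition into the $m_j$) shows that $m$-a.e.\ point lies in the basin of one of the $m_j$.

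The main obstacle, I expect, is the passage from the functional-analytic statement ``the averaged transfer operator converges to a projection onto an $N$-dimensional space of densities'' to the genuinely dynamical, pointwise statement ``$m$-a.e.\ point has well-defined Birkhoff averages converging to a physical measure.'' The transfer operator a priori only controls averages of observables in the $L^1(m)$-sense (through the duality $\int (\Phi^k f)\psi\, dm = \int f \cdot (\psi\circ F^k)\, dm$ when $m$ is conformal/invariant), not the pointwise ergodic behaviour of individual orbits; bridging this gap requires invoking Birkhoff's ergodic theorem for the reference measure restricted appropriately, identifying the invariant $\sigma$-algebra with the partition into the basins of the $m_j$, and checking that the absolutely continuous part of Lebesgue is exhausted by $\sum_j w_j m_j$ with all $w_j > 0$ (i.e.\ no ergodic component is ``invisible''). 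I would handle this by the now-classical argument: the Koopman operator on $L^2(m)$ restricted to the absolutely continuous invariant measures, the fact that $\{x : \text{Birkhoff averages of } x \text{ converge to } m_j\}$ has $m$-measure $w_j$, and that these sets partition $I$ up to $m$-null sets because $\pi_1 1 = \sum_j w_j \rho_j$ has $\int \pi_1 1\, dm = 1 = m(I)$ forces $\sum_j w_j = 1$ with the basins covering $I$. Finally, $\ell \leq N$ since the $\rho_j$ are linearly independent elements of the $N$-dimensional fixed space, which gives the bound on the number of physical measures; all remaining assertions (absolute continuity, basin covering $I$) have then been established along the way.
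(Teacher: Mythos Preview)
Your approach via the spectral decomposition of Corollary~\ref{spec}(B) and Ces\`aro convergence is genuinely different from the paper's, and the first half---bounding the number of ergodic absolutely continuous invariant probabilities by $N$ via linear independence of their densities in the $1$-eigenspace---is correct and matches the paper. However, your argument for basin coverage has a real gap. From the duality $\int (\Phi^k 1)\,\psi\,dm = \int \psi\circ T^k\,dm$ together with the $L^1$-convergence $\frac{1}{n}\sum_{k<n}\Phi^k 1 \to \pi_1 1$, you only obtain
\[
\int \Big(\frac{1}{n}\sum_{k<n}\psi\circ T^k\Big)\,dm \;\longrightarrow\; \sum_j w_j\int\psi\,dm_j,
\]
that is, convergence of the \emph{integral} of the Birkhoff averages, not $L^1(m)$-convergence of the Birkhoff averages themselves to a constant. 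Since $m$ is not $T$-invariant, the mean ergodic theorem does not apply directly to the Koopman operator on $L^2(m)$, and your sketch of a fix (``the now-classical argument: the Koopman operator on $L^2(m)$ \dots'') does not close the gap; nothing you have written forces $m$-almost every point to lie in the basin of some $m_j$, nor that every $w_j>0$.

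The paper avoids this difficulty by a direct contradiction argument rather than duality. It lists the ergodic absolutely continuous invariant probabilities $\mu_1,\dots,\mu_n$, sets $\Omega_i=\{\rho_i>0\}$ and $\Omega^c=\big(\bigcup_i\bigcup_{j\ge 0}T^{-j}\Omega_i\big)^c$, and shows $m(\Omega^c)=0$: if not, one approximates $u=1_{\Omega^c}$ in $L^1$ by elements of $\mathcal{B}^s_{p,q}$, uses the Lasota--Yorke inequality and $L^1$-compactness of $\mathcal{B}^s_{p,q}$-balls to extract a Ces\`aro limit $v\ge 0$ with $\Phi v=v$, $\int v\,dm=m(\Omega^c)>0$, and $v$ supported in $\Omega^c$ (since $T(\Omega^c)\subset\Omega^c$ forces each $\Phi^j(1_{\Omega^c})$ to vanish outside $\Omega^c$). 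Lemma~\ref{erg} then yields an ergodic absolutely continuous invariant probability with support inside $\Omega^c$, contradicting the definition of $\Omega^c$. Basin coverage follows from $m(\Omega^c)=0$ by the standard step you allude to (Birkhoff for each $\mu_i$, equivalence of $m$ and $\mu_i$ on $\Omega_i$, and nonsingularity of $T$), but the substance of the proof is this construction of an invariant density on the complement, which your spectral route does not supply.
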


Denote by $M(\mathcal{B}^s_{p,q}) \subset \mathcal{B}^s_{p,q}$ the set of the functions $g$ such that the pointwise multiplication   
$$f \mapsto fg $$
is a bounded operator in $\mathcal{B}^s_{p,q}$, that is, if $f\in \mathcal{B}^s_{p,q}$ then $fg \in \mathcal{B}^s_{p,q}$ and 
$$\sup \{ |fg|_{\mathcal{B}^s_{p,q}}\colon \ |f|_{\mathcal{B}^s_{p,q}}\leq 1 \} < \infty. $$
We know that $B^{1/p}_{p,\infty}\cap L^\infty\subset M(\mathcal{B}^s_{p,q})\subset L^\infty$ (see \cite{smania-besov}).
\begin{corollary}[Central Limit Theorem and Almost Sure Invariance Principle] \label{asip1} Suppose that  $\Crr{A000}-\Crr{A5}$  hold for some $p \in [1,\infty)$, $q\in [1,\infty)$ and $0< s< 1/p$ 
and that  $1$ is a simple eigenvalue of the transfer operator $\Phi\colon B^{s}_{p,q}\rightarrow B^{s}_{p,q}$. In particular there is a unique absolutely continuous invariant probability $\rho_0 m$, and $\rho_0 \in B^s_{p,q}$. Then for every  function $v=v_1+v_2$, where $v_1  \in \mathcal{B}^{1/p}_{p,\infty}$ and $v_2\in M(\mathcal{B}^s_{p,q})$ are real-valued functions, such that  
\begin{equation}\label{average}  \int v  \rho_0  dm=0\end{equation}
we have that 
\begin{itemize}
\item[A.] the following limit 
\begin{equation}\label{sigma} \sigma^2(v) = \lim_{n\rightarrow \infty} \int  \Big(\frac{ \sum_{i=0}^{n-1} v\circ T^i}{\sqrt{n}}\Big)^2 \ \rho_0 dm\end{equation}
converges.
\item[B.] if  $\sigma(v)> 0$ the sequence 
\begin{equation}\label{series} v , v \circ T, v \circ T^2...\end{equation}
satisfies the Central Limit Theorem with average $0$ and variance $\sigma$ and the Almost Sure Invariance Principle with every  error exponent satisfying
$$\delta >\frac{1}{4}.$$
\end{itemize}
\end{corollary}

\section{Uniqueness and structure of invariant measures} 
\label{lyorke2}

The property   that $1$  is a simple eigenvalue  is quite useful and sometimes the trickiest to achieve, particularly in dynamical systems with discontinuities. The following  assumption  allows us to better  understand the structure  of the invariant densities and it makes it easier to check $E=\{1\}$.  \\

\fcolorbox{black}{white}{
\begin{minipage}{\textwidth}
\noindent {\bf Assumption $\Cll[A]{A8}$.} The potentials $g_r$ are $\mathcal{B}^s_{p,q}$-positive.
\end{minipage} 
}
\ \\

\begin{theorem}[Structure of invariant measures] \label{acim} Suppose $\Crr{A000}-\Crr{A8}$. Then  every $\rho \in L^1$, $\rho\geq 0$ that satisfies (\ref{co}) is  $\mathcal{B}^s_{p,q}(I,m,\mathcal{P})$-positive. In particular  
\begin{itemize}
\item[A.] the    set $$\{x\in I\colon \ \rho(x) > 0\}$$ is (except for a set of zero $m$-measure) a countable  union of  elements of $\mathcal{P},$ 
\item[B.] for $m$-almost every $y \in \{x\in I\colon \ \rho(x) > 0\}$  there is $P \in \cup_k \mathcal{P}^k$ and $\epsilon > 0$ such that $y \in P$, $\rho(y)\geq \epsilon$, and $\rho(x) \geq \epsilon$ for $m$-almost every point $x$ in $P$.
\end{itemize}
 \end{theorem}
 \ \\
 
 \fcolorbox{black}{white}{
\begin{minipage}{\textwidth}
\noindent {\bf Assumption  $\Cll[A]{A9}$.} We have that $T$ is {\bf transitive}, that is, for every $P,Q\in \mathcal{P}$ there is $n\geq 0$ such that $m(P\cap T^{-n}Q)> 0$. 
\end{minipage} 
}
\ \\

 \begin{corollary}[Ergodicity]\label{ergodic} Suppose  $\Crr{A000}-\Crr{A9}$.  Then there is an unique $m$-absolutely continuous invariant  probability $\mu=\rho_0 \ dm $ for $T$. Moreover 
 \begin{itemize}
 \item[A.] The probability  $\mu$ is ergodic, 
 \item[B.]  The basin of attraction of $\mu$ is $I$ (except for a set of zero $m$-measure).
 \item[C.]  The unique function $\rho \in L^1$ that satisfies (\ref{co}) is $\rho_0$.
 \item[D.] The set E  is a cyclic group. 
 \item[E.] $1$ is a simple eigenvalue.
 \end{itemize}
\end{corollary}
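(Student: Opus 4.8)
The plan is to deduce the corollary from Theorem \ref{acim}, Corollary \ref{spec}, and the transitivity assumption $\Crr{A9}$. First I would recall from Theorem \ref{acim} that any $\rho\in L^1$, $\rho\ge 0$ with $\int\rho\,dm=1$ and $\Phi(\rho)=\rho$ is $\mathcal{B}^s_{p,q}$-positive, so its positivity set $U(\rho)=\{\rho>0\}$ is, up to a null set, a countable union of grid elements $P\in\mathcal{P}$. The key dynamical input is that $\Phi$ is the transfer operator for $T$ with respect to $m$ (this is encoded in $\int\Phi f\,dm=\int f\,dm$ together with the piecewise-defined structure), so that $\Phi(\rho)=\rho$ with $\rho\ge 0$ forces $U(\rho)$ to be forward-invariant in the measure-theoretic sense: $m(P\setminus T^{-1}U(\rho))=0$ for every $P\in\mathcal{P}$ with $P\subset U(\rho)$, and more generally $T^{-1}U(\rho)\supset U(\rho)$ up to null sets; dually $T(U(\rho))\subset U(\rho)$.

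For uniqueness and part C, the plan is: suppose $\rho_1,\rho_2$ both satisfy (\ref{co}). By Corollary \ref{spec}.C such $\rho_i$ exist, and by the quasi-compactness in Corollary \ref{spec}.A-B the $1$-eigenspace is finite-dimensional. I would show the positivity sets $U(\rho_1)$ and $U(\rho_2)$ must coincide with all of $I$ up to null sets. Indeed $U(\rho_i)$ is a nonempty union of grid elements, so it contains some $Q\in\mathcal{P}$; by transitivity, for every $P\in\mathcal{P}$ there is $n$ with $m(P\cap T^{-n}Q)>0$, hence $T^n$ maps a positive-measure subset of $P$ into $Q\subset U(\rho_i)$. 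Using forward-invariance of $U(\rho_i)$ under $T$ (equivalently, $\Phi^n$ applied to a density supported on $Q$ stays $\ge$ a positive multiple of $\rho_i$ on the relevant sets) one gets $m(P\cap U(\rho_i))>0$, and since $U(\rho_i)$ is a union of grid elements and $P$ was arbitrary, $U(\rho_i)=I$ mod $m$. This is item B. Then on the finite-dimensional $1$-eigenspace, the standard argument applies: if $\rho_1\ne\rho_2$, then $\rho_1-\rho_2$ is a $1$-eigenfunction with $\int(\rho_1-\rho_2)\,dm=0$ and, by linearity, $\rho_1-c(\rho_2)$ for suitable $c>0$ would be a nonnegative, nonzero $1$-eigenfunction vanishing on a positive-measure set — contradicting that every such eigenfunction has full positivity set. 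Hence the $1$-eigenspace is one-dimensional and $\rho_0=\rho_1=\rho_2$, giving uniqueness and item C.

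For ergodicity (item A), I would argue that any $T$-invariant set $A$ with $\mu(A)\in(0,1)$ would give $\rho_0\cdot 1_A$ and $\rho_0\cdot 1_{A^c}$ as two linearly independent nonnegative $L^1$ functions fixed by $\Phi$ (since $\Phi$ is the transfer operator and $A$ is invariant), each with positive integral; normalizing, this contradicts uniqueness of $\rho$ satisfying (\ref{co}) proved above. For item D, that $E$ is a cyclic group: by Corollary \ref{spec}.E every element of $E$ is a root of unity, and by Corollary \ref{spec}.D the eigenfunctions $u_k=(\mathrm{sgn}\,u)^k|u|$ for $\lambda\in\mathbb{S}^1$ show $E$ is closed under taking powers; combined with finiteness of $E$ and the group structure inherited from $\Phi u=\lambda u$, $\Phi u'=\lambda' u'\Rightarrow\Phi(uu'/\rho_0)=\lambda\lambda'(uu'/\rho_0)$ (using that $|u|=|u'|=\rho_0$ by uniqueness of the invariant density and Corollary \ref{spec}.D), $E$ is a finite subgroup of $\mathbb{S}^1$, hence cyclic.

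The main obstacle I anticipate is making rigorous the passage between the operator-theoretic statement $\Phi\rho=\rho$ and the dynamical statement ``$U(\rho)$ is forward-invariant and transitivity forces it to be everything'' — in particular, justifying that $\Phi$ genuinely behaves as the transfer operator of $T$ on the level of supports (that $\mathrm{supp}\,\Phi f$ relates correctly to $T(\mathrm{supp}\,f)$ and that $\Phi^n$ spreads mass according to $T^n$), and that the $\mathcal{B}^s_{p,q}$-positivity from Theorem \ref{acim} interacts correctly with restriction to invariant sets (one needs $\rho_0\cdot 1_A\in\mathcal{B}^s_{p,q}$, which should follow because $A$, being invariant and $U(\rho_0)=I$, must itself be mod-$m$ a union of grid elements — this is where transitivity and the grid structure are essential). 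Handling the cocycle/coboundary structure for item D rigorously also requires a little care, but it is standard once $\rho_0$ is known to be unique and strictly positive on all of $I$.
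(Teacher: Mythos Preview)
The paper does not actually supply a proof of this corollary, so there is nothing to compare against directly; I will assess your plan on its own.

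Your overall strategy---combine Theorem~\ref{acim} (supports of invariant densities are unions of grid elements), Corollary~\ref{spec} (spectral structure), and the transitivity assumption $\Crr{A9}$---is the right one, and your arguments for A (ergodicity from uniqueness) and D (cyclic $E$ via Corollary~\ref{spec}.D--E and the cocycle identity $\Phi(uu'/\rho_0)=\lambda\lambda'(uu'/\rho_0)$ once $|u|=\rho_0$) are fine.

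There is, however, a genuine gap in your argument for B, and since your route to uniqueness/C passes through B, that is affected too. You write: from $m(P\cap T^{-n}Q)>0$ with $Q\subset U(\rho)$, ``using forward-invariance of $U(\rho)$ under $T$ \dots\ one gets $m(P\cap U(\rho))>0$.'' But forward invariance says points \emph{in} $U(\rho)$ stay in $U(\rho)$; it says nothing about points in $P$ that land in $U(\rho)$ after $n$ steps. What you would need is $T^{-1}U(\rho)\subset U(\rho)$, which is not automatic (it fails whenever some potential $g_r$ vanishes on a set of positive measure). Even if you salvage $m(P\cap U(\rho))>0$ for every $P\in\mathcal{P}$ by swapping the roles of $P$ and $Q$ in transitivity (use $m(Q\cap T^{-n}P)>0$, push $Q\subset U(\rho)$ forward into $P$, and use that $T$ maps sets of positive $m$-measure to sets of positive $m$-measure), the conclusion ``$U(\rho)=I$ mod $m$'' still does not follow: a countable union of grid elements can meet every grid element in positive measure without having full measure (think of the complement of a fat Cantor set in the dyadic case).

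A cleaner route is to prove uniqueness first, \emph{without} B. Two distinct ergodic a.c.i.p.'s $\rho_1 m,\rho_2 m$ have disjoint supports $U(\rho_1),U(\rho_2)$, each a union of grid elements by Theorem~\ref{acim}. Pick $P\subset U(\rho_1)$, $Q\subset U(\rho_2)$; transitivity gives $m(P\cap T^{-n}Q)>0$, and forward invariance of $U(\rho_1)$ forces $T^n(P\cap T^{-n}Q)\subset U(\rho_1)\cap Q\subset U(\rho_1)\cap U(\rho_2)$, which has positive $m$-measure---a contradiction. This yields A and C directly. For B you then need a separate argument (e.g.\ use the one-dimensionality of the $1$-eigenspace and Corollary~\ref{spec}.B to show $(1/N)\sum_{n<N}\Phi^n(a_P)\to m(P)^{s-1/p+1}\rho_0$, and track supports using the positivity in $\Crr{A8}$ and Theorem~\ref{key}); your current justification of B does not close.
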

\ \\

 \fcolorbox{black}{white}{
\begin{minipage}{\textwidth}
\noindent {\bf Assumption  $\Cll[A]{A10}$.} We have that $T$ is {\bf topologically mixing}, that is, for every $P,Q\in \mathcal{P}$ there is $n_0\geq 0$ such that $m(P\cap T^{-n}Q)> 0$ for every $n\geq n_0$. 
\end{minipage} 
}
\ \\

\begin{corollary}[Mixing and decay of correlations]\label{mixing} Suppose  $\Crr{A000}-\Crr{A10}$.Then 
\begin{itemize}
\item[A.] There exist $\Cll{em}\geq 0$ and $\Cll[c]{em1} \in (0,1)$ such that the following holds: If $u \in \mathcal{B}^s_{p,q}$  and $v \in L^{p'}$ then
 $$\left| \int v\circ T^k u \ dm - \int v \rho_0 \ dm \int u \ dm \right|\leq \Crr{em}\Crr{em1}^k|u|_{ \mathcal{B}^s_{p,q}}|v|_{p'}.$$
 \item[B.] $E=\{1\}.$
 \item[C.] $1$ is a simple eigenvalue.
 \end{itemize}
\end{corollary}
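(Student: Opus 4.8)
The statement to prove is the ``Mixing and decay of correlations'' corollary, which has two parts: (A) exponential decay of correlations for observables $u \in \mathcal{B}^s_{p,q}$ against $v \in L^{p'}$, and (B) that $E = \{1\}$. I would establish (B) first, since the spectral picture it provides is what makes (A) fall out of the quasi-compact decomposition.

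For part (B): by the preceding corollary (Ergodicity), item D, we know $E$ is a cyclic group, say generated by a primitive $n$-th root of unity $\zeta$, with $n = \# E$. Suppose for contradiction $n > 1$. By Corollary \ref{spec}(D), if $\zeta \in \mathbb{S}^1$ is an eigenvalue there is $u \in \mathcal{B}^s_{p,q}$ with $\Phi u = \zeta u$, $\Phi(|u|) = |u|$, and $u_k = (\operatorname{sgn} u)^k |u|$ satisfies $\Phi u_k = \zeta^k u_k$. Since the $1$-eigenspace is one-dimensional and spanned by $\rho_0$ (Ergodicity corollary, item C, together with Assumption $\Crr{A8}$ via Theorem \ref{acim}), we get $|u| = c\rho_0$ for a constant $c > 0$, so after rescaling $|u| = \rho_0$ and $\operatorname{sgn} u$ takes values in the $n$-th roots of unity on the support of $\rho_0$, which is all of $I$ by the Ergodicity corollary, item B. The identity $\Phi u = \zeta u$ combined with $\Phi \rho_0 = \rho_0$ and the transfer-operator structure $\Phi(f)(x) = \sum_r g_r(x) f(h_r(x))$ with $g_r \geq 0$ then forces $\operatorname{sgn} u(x) = \zeta\, \operatorname{sgn} u(T x)$ for $m$-a.e.\ $x$ where the dynamics act, i.e.\ $\operatorname{sgn} u \circ T = \zeta^{-1} \operatorname{sgn} u$; this partitions $I$ (mod $m$) into $n$ sets $A_0, \dots, A_{n-1}$, the level sets of $\operatorname{sgn} u$, cyclically permuted by $T$. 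But then $m(A_0 \cap T^{-j} A_1) = 0$ for every $j \not\equiv 1 \pmod n$, contradicting transitivity (Assumption $\Crr{A9}$), which demands some $j$ with $m(A_0 \cap T^{-j} A_1) > 0$. Hence $n = 1$, i.e.\ $E = \{1\}$.

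For part (A): with $E = \{1\}$ now known, Corollary \ref{spec}(B) gives the spectral decomposition $\Phi^k = \pi_1 + \tilde\Theta^k$ where $\pi_1$ is the rank-one projection onto $\mathbb{C}\rho_0$ and $\tilde\Theta$ has spectral radius strictly less than $1$; by the spectral radius formula there are $\Crr{em} \geq 0$ and $\Crr{em1} \in (0,1)$ with $|\tilde\Theta^k|_{\mathcal{B}^s_{p,q}} \leq \Crr{em}\,\Crr{em1}^k$. Since $\int \Phi(f)\,dm = \int f\,dm$ (Assumption $\Crr{A5}$) and $\pi_1$ projects onto $\rho_0$, one identifies $\pi_1 u = \big(\int u\,dm\big)\rho_0$. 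Now use the duality between the transfer operator and composition: for $u \in \mathcal{B}^s_{p,q}$ and $v \in L^{p'}$,
$$
\int v \circ T^k \, u \, dm = \int v \, \Phi^k(u) \, dm = \int v \, \pi_1 u \, dm + \int v \, \tilde\Theta^k u \, dm = \int u\,dm \int v \rho_0\,dm + \int v\,\tilde\Theta^k u\,dm.
$$
The error term is bounded by $|v|_{p'} \, |\tilde\Theta^k u|_p \leq |v|_{p'} \, K_p\, |\tilde\Theta^k u|_{\mathcal{B}^s_{p,q}} \leq K_p\,\Crr{em}\,\Crr{em1}^k |v|_{p'} |u|_{\mathcal{B}^s_{p,q}}$, using Proposition \ref{inc} (with $t = p$, so $|\cdot|_p \leq K_p |\cdot|_{\mathcal{B}^s_{p,q}}$) and Hölder's inequality. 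Absorbing $K_p$ into the constant gives the claimed estimate. Part (A) item B of the conclusion is simply a restatement of what we proved in the first step.

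\textbf{Main obstacle.} The routine part is (A) — it is a textbook consequence of quasi-compactness plus the Lasota–Yorke setup already in place. The real work is the cyclicity argument in (B): one must carefully justify that the eigenfunction $u$ for $\zeta \in \mathbb{S}^1$ has modulus proportional to $\rho_0$ (needing both the one-dimensionality of the $1$-eigenspace and positivity of $\rho_0$ everywhere), and then extract the pointwise functional equation $\operatorname{sgn} u \circ T = \zeta^{-1}\operatorname{sgn} u$ from $\Phi u = \zeta u$ — this last step requires unwinding the definition of $\Phi$ on the branches $h_r$ and using $g_r \geq 0$ to conclude that the sign of $u$ is ``rigid'' along orbits, precisely the place where the structure-of-invariant-measures theorem (Theorem \ref{acim}) and Corollary \ref{spec}(D) do the heavy lifting. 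The contradiction with transitivity is then immediate once the cyclic partition is in hand.
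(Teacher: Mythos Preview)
The paper does not actually supply a proof of this corollary, so there is no reference argument to compare against. Evaluating your proposal on its own terms:

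Your argument for Part A, \emph{conditional on Part B}, is correct and standard: once $E=\{1\}$, the decomposition of Corollary \ref{spec}(B) collapses to $\Phi^k=\pi_1+\tilde\Theta^k$ with $\tilde\Theta$ of spectral radius $<1$, the identification $\pi_1 u=(\int u\,dm)\,\rho_0$ follows from Assumption $\Crr{A5}$ together with one-dimensionality of the $1$-eigenspace, and the correlation bound is then just the duality $\int (v\circ T^k)\,u\,dm=\int v\,\Phi^k u\,dm$ combined with H\"older and Proposition \ref{inc}.

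Part B, however, has a genuine gap. You correctly extract from Corollary \ref{spec}(D) and the Ergodicity corollary a function $s=\operatorname{sgn} u$ taking values in the $n$-th roots of unity, with $s\circ T=\zeta^{-1}s$, and hence a measurable cyclic partition $I=A_0\cup\cdots\cup A_{n-1}$ with $T(A_i)\subset A_{i-1}$ (mod $n$). From this you deduce that $m(A_0\cap T^{-j}A_1)=0$ except for one residue class of $j$ mod $n$. But that is \emph{not} a contradiction with Assumption $\Crr{A9}$: transitivity only requires that for each pair $P,Q\in\mathcal P$ there exist \emph{some} $j\geq 0$ with $m(P\cap T^{-j}Q)>0$, and the cyclic structure supplies exactly such a $j$ (any $j$ in the correct residue class). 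A cyclic block permutation is entirely compatible with transitivity as defined here --- think of a piecewise expanding map on $[0,1]$ that swaps $[0,\tfrac12]$ and $[\tfrac12,1]$, with $T^2$ mixing on each half: such a $T$ satisfies $\Crr{A9}$ yet has $-1\in E$. Ruling out $n>1$ requires an \emph{aperiodicity} condition (for every $P,Q\in\mathcal P$ there is $N$ with $m(P\cap T^{-j}Q)>0$ for all $j\geq N$), which is strictly stronger than $\Crr{A9}$ as written. So either an additional hypothesis is intended by the paper, or some further structural input (not present in your argument) is needed; either way, the contradiction you claim in the last two sentences of your Part B does not follow.
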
 

\vspace{1cm}
\centerline{ \bf IV. ACTION ON $\mathcal{B}^s_{p,q}$.}
\addcontentsline{toc}{chapter}{\bf IV. ACTION ON $\mathcal{B}^s_{p,q}$.}
\vspace{1cm}

\section{Notation} We will use $C_1, C_2, \dots...$ for positive constants, $\lambda_1, \lambda_2, \dots$ for positive constants smaller than one.

\begin{table}[h]
  \centering
  \caption{List of symbols}
  \label{tab:table1}
  \begin{tabular}{cc}
    \toprule
    Symbol & Description\\
    \midrule
     $\mathcal{A}^{bs}_{s,\beta,p,q'} $ & class of $(s,\beta,p,q')$-Besov's atoms \\
     $\mathcal{P}$ & good grid  of  $I$\\
     $\mathcal{H}$ & good grid  of  $J$\\
        $\mathcal{G}_r$ & grid  of  $I_r$\\
    $\mathcal{P}^k$ & partition at the $k$-th level of $\mathcal{P}$\\
    $\mathcal{B}^s_{p,q}, \mathcal{B}^s_{p,q}(\mathcal{A}^{sz}_{s,p})$ & (s,p,q)-Banach space defined by Souza's atoms\\
    $Q, W$ & elements of grids\\
    $\{g_r\}_{r\in \Lambda}$& family of potentials\\
    $|\cdot|_p$ & norm in $L^p$, $p \in [1,\infty].$ \\
    $\Phi$ & transfer operator.  \\
    $p'$ & $1/p+ 1/p'=1$.  \\
    \bottomrule
  \end{tabular}
\end{table}

\section{Proof of  Theorem \ref{key}}

Theorem \ref{key} is our main technical result and its proof takes several steps. 
\subsection{Step 1:  Dynamical Slicing}  \label{rssec}
 
\begin{figure}
\includegraphics[scale=0.4]{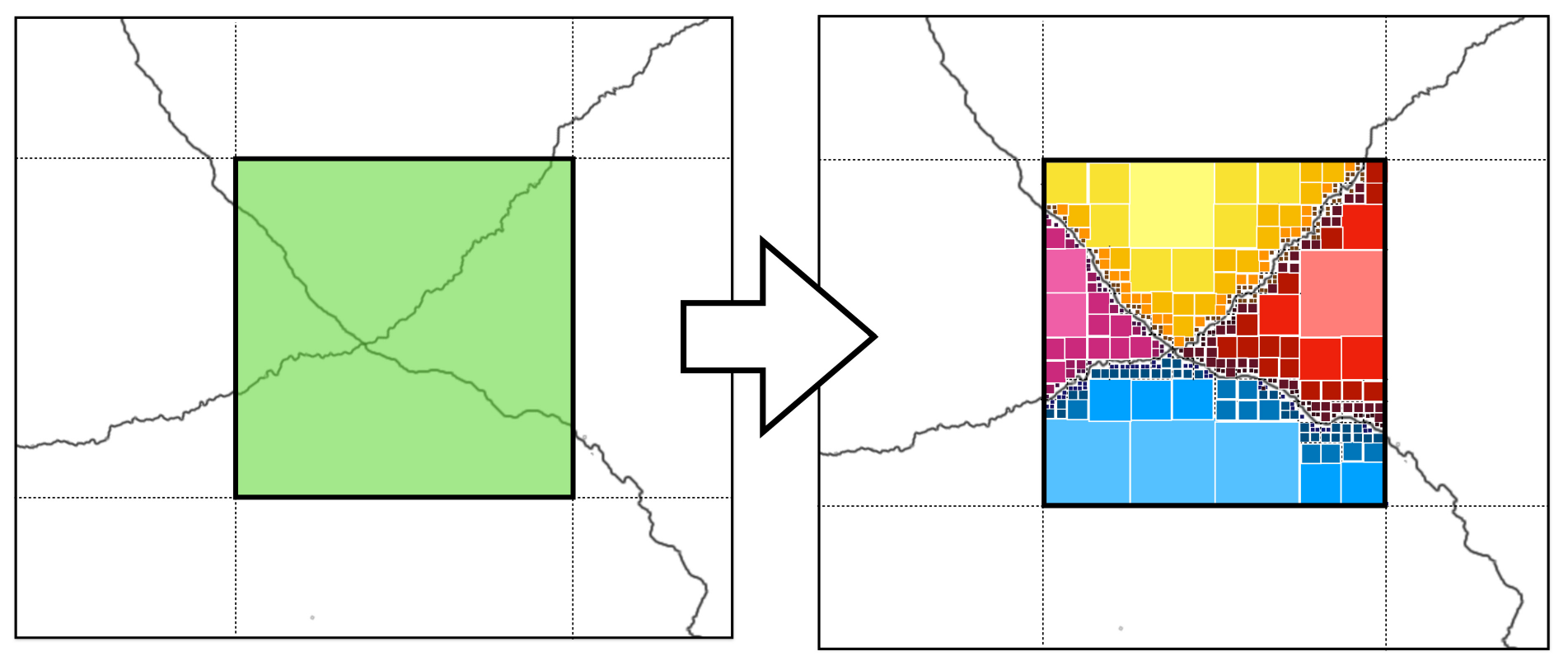}
\caption{Dynamical Slicing.  The $\mathcal{B}^s_{p,q}$ representation of the function $f$ may contains atoms $a_Q$ whose support $Q$ is not contained  in  any of the domains $I_r$. The square on the left represents the support $Q$ of an atom $a_Q$. It intercepts four domains  in the family $\{I_r\}_r$, whose boundaries are  represented by the tortuous lines.   In the dynamical slicing we replace this representation by another one where the support of each atom in it is  contained  in some domain $I_r$, as pictured on the right. }
\end{figure}

 By Section \ref{bsspq} we have that the linear application 
$$\Phi\colon \mathcal{B}^s_{p,q}(I,m,\mathcal{P}) \rightarrow L^{1}(\mu)$$
is well defined and bounded, so if   $f\in \mathcal{B}^s_{p,q}(I,m,\mathcal{P})$  satisfies  the assumptions of Theorem \ref{key} we  need to show that  $\Phi(f) \in \mathcal{B}^s_{p,q}(J,\mu,\mathcal{H})$ and estimate its norm. By assumption, for every $r \in \Lambda$  there is a $\mathcal{B}^s_{p,q}(I_r, m, \mathcal{G}_r)$-representation of  $f\cdot 1_{I_r}$
$$ f\cdot 1_{I_r}=\sum_k  \sum_{Q\in \mathcal{G}_r^k}  c_Q^r   a_Q,$$
 satisfying  either (\ref{hiip1}) or (\ref{hiip2}).

\subsection{Step 2: Applying  the transfer transformation} \ \\

\noindent {\bf Claim 1.} {\it There is  a   $\mathcal{B}^s_{p,q}(J,\mu,\mathcal{H},\mathcal{A}^{bv}_{\beta,p,q})$-representation
\begin{equation} \sum_i \sum_{W\in \mathcal{H}^i}  s_W  b_W\end{equation}
of $ \Phi(f)$  satisfying 
\begin{equation}\label{ms}   \Big( \sum_i  \big( \sum_{W\in \mathcal{H}^i}  |s_W|^p  \big)^{q/p}\Big)^{1/q} \leq \Crr{DRS1}    \big( \sum_k (\sum_{Q \in \mathcal{P}^k}    |d_Q|^p)^{q/p} \big)^{1/q}. \end{equation}}
\vspace{5mm} 

 Recall that 
$$\Phi(f)=\sum_r \Phi_r (f\cdot 1_{I_r}).$$
This series converges absolutely on $L^{1}(\mu)$, that is 
$$\sum_r |\Phi_r (f\cdot 1_{I_r})|_{1} < \infty,$$
so on $L^{1}$ we have 
$$\Phi(f)=\lim_{r_0\rightarrow\infty} \sum_{r\leq r_0} \Phi_r (f\cdot 1_{I_r}).$$
On the other hand, since $\Phi_r$ is a bounded transformation in $L^{t_0}(m)$ we have 
$$\Phi_r (f\cdot 1_{I_r}) =\sum_j   \Phi_r (\sum_{Q\in \mathcal{G}^j_r}  c_Q^r   a_Q)= \sum_j  \sum_{Q\in \mathcal{G}^j_r }  c_Q^r   \Phi_r(a_Q),  $$
So on $L^{1}$ 
$$ \sum_{r\leq r_0} \Phi_r (f\cdot 1_{I_r})=\lim_{j_0\rightarrow \infty}  \sum_{r\leq r_0}  \sum_{j\leq j_0}  \sum_{Q\in \mathcal{G}^j_r}  c_Q^r   \Phi_r(a_Q).$$
Note that all the sums on the r.h.s. are {\it finite}. To prove Claim 1. it is enough to show
\vspace{5mm} 

\begin{figure}
\includegraphics[scale=1]{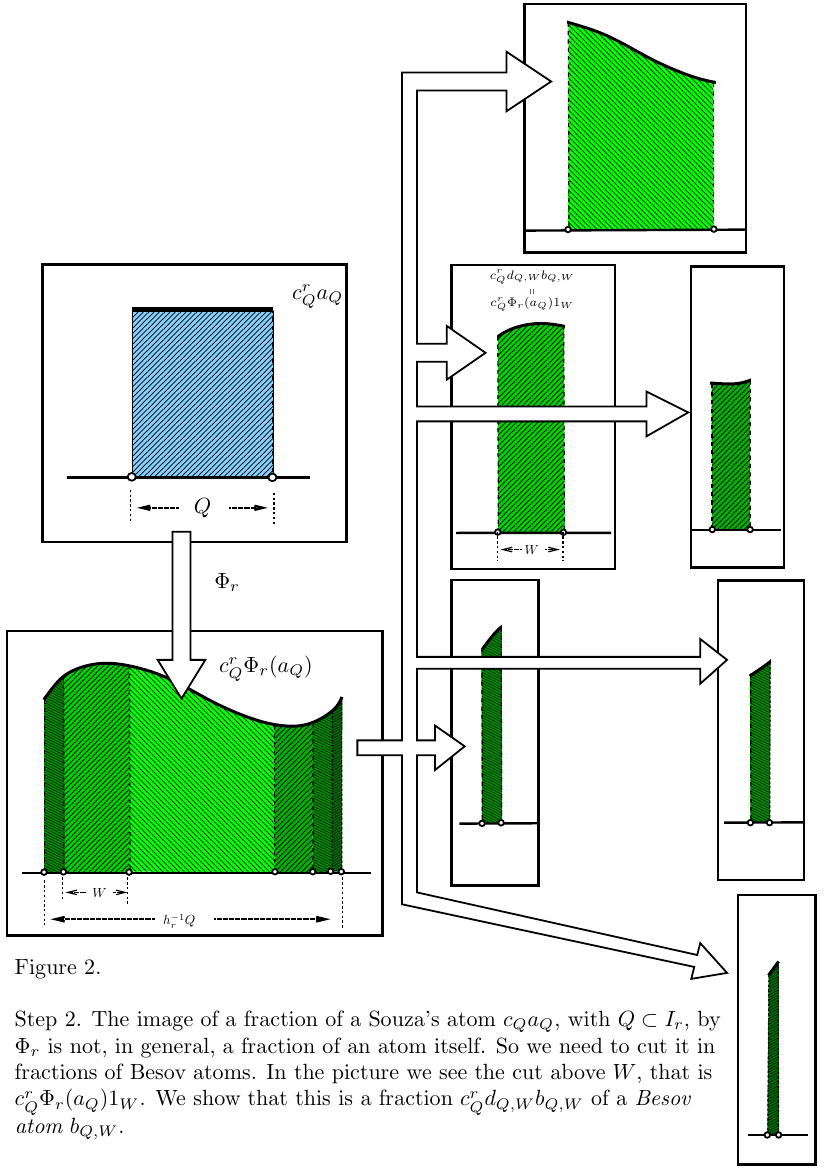}
\end{figure}

\noindent {\bf Claim 2.} {\it For every $r_0$ and $j_0$  we have that 
$$\phi_{i_0,j_0}= \sum_{r\leq r_0}  \sum_{j\leq j_0}  \sum_{Q\in \mathcal{G}^j_r}  c_Q^r   \Phi_r(a_Q)$$
has a $\mathcal{B}^s_{p,q}(J,\mu,\mathcal{H},\mathcal{A}^{bv}_{\beta,p,q})$-representation 
$$\phi_{r_0,j_0}= \sum_k\sum_{W\in \mathcal{H}^k} s_W  b_W$$
satisfying (\ref{ms}). }
\vspace{5mm}

\noindent Indeed, if  Claim 2. holds then $\Phi(f)$ belongs to the $L^1$-closure of $S=\{\phi_{r_0,j_0}\}_{r_0,j_0}$, so   Proposition \ref{compactness} implies that  Claim 1. holds.

 For every $Q \subset I_r$, with $Q\in \mathcal{G}^j_r$ we have that  $h_r^{-1}(Q)$ is a $(s,p,\Crr{DGD1},\Crr{DGD2})$-regular  domain of $(J,\mu,\mathcal{H})$, so we can consider the corresponding families   $\mathcal{F}^k(h_r^{-1}(Q))\subset \mathcal{H}^k$, $k\geq  k_0(h_r^{-1}(Q),\mathcal{H})$.  Since
$$1_{h_r^{-1}(Q)}=\sum_{k\geq k_0(h_r^{-1}(Q),\mathcal{H})}\sum_{W\in \mathcal{F}^k(h_r^{-1}(Q))} 1_W.$$
\vspace{5mm} 

\noindent {\bf Claim 3.}  {\it We have the following limit on  $L^1(\mu)$ 
$$\Phi_r(a_Q)=\lim_{K\rightarrow \infty} \sum_{\substack{ k\leq K\\k\geq k_0(h_r^{-1}(Q),\mathcal{H})}}\sum_{W\in \mathcal{F}^k(h_r^{-1}(Q))} \Phi_r(a_Q)1_W.$$}
\vspace{5mm} 

\noindent Note that this  limit holds pointwise almost everywhere since $\Phi_r(a_Q)$ vanishes outside $h_r^{-1}(Q)$. Furthermore  since $\Phi_r(a_Q)\in L^1(\mu)$ and  the elements of 
$$ \bigcup_{k\geq k_0(h_r^{-1}(Q),\mathcal{H})} \mathcal{F}^k(h_r^{-1}(Q))$$
are pairwise disjoint we have 
$$ \sum_{k\geq k_0(h_r^{-1}(Q))}\sum_{W\in \mathcal{F}^k(h_r^{-1}(Q))} |\Phi_r(a_Q)1_W|_1\leq |\Phi_r(a_Q)|_1<\infty.$$
In particular the sequence in Claim 3 converges absolutely in $L^1(\mu)$  to $\Phi_r(a_Q)$. This proves  Claim 3. So again by Proposition \ref{compactness}, we reduce the proof of Claim 2. to  the following claim
\vspace{5mm} 

\noindent {\bf Claim 4.} {\it For every $r_0$ and $j_0$  we have that 
$$\phi_{i_0,j_0,K}= \sum_{r\leq r_0}  \sum_{j\leq j_0}  \sum_{Q\in \mathcal{P}^j, Q \subset I_r }  \sum_{\substack{ k\leq K\\k\geq k_0(h_r^{-1}(Q),\mathcal{H})}}\sum_{W\in \mathcal{F}^k(h_r^{-1}(Q))} c_Q^r  \Phi_r(a_Q)1_W.$$
has a $\mathcal{B}^s_{p,q}(\mathcal{A}^{bv}_{\beta,p,q})$-representation 
$$\phi_{r_0,j_0,K}= \sum_k\sum_{W\in \mathcal{P}^k} s_W  b_W$$
satisfying (\ref{ms}). }
 \vspace{5mm} 

Note that for every pair $(Q,W)$ such that $W\subset h^{-1}_r(Q)$, $Q \subset I_r$, we have
$$\Phi_r(a_Q) 1_W =g\cdot a_Q\circ h_r 1_W = g\cdot |Q|^{s-1/p} 1_Q\circ h_r 1_W=g |Q|^{s-1/p} 1_W.$$   So by (\ref{supe33})
$$|\Phi_r(a_Q)1_W|_{\mathcal{B}^\beta_{p,q}(W,\mathcal{H}_W, \mathcal{A}^{sz}_{\beta,p}) }  \leq  \Crr{DRP}^r \Big( \frac{|Q|}{|h_r^{-1}(Q)|}\Big)^{1/p-s+\epsilon_r} |W|^{1/p-\beta} |Q|^{s-1/p}.$$
If  $W\in \mathcal{F}^k(h^{-1}_r(Q))$, $Q \subset I_r$ and  $c^r_Q\neq 0$,  define
$$b_{Q,W}(x) = \frac{1}{\Crr{DRP}^r}\Big( \frac{|h^{-1}_rQ|} {|Q| }\Big)^{1/p-s+\epsilon_r} |Q|^{1/p-s}|W|^{s-1/p}  \Phi_r(a_Q) 1_W,$$
and
\begin{equation*} d_{Q,W}= \Crr{DRP}^r\frac{|Q|^{\epsilon_r} }{|h_r^{-1}(Q)|^{\epsilon_r}} \frac{|W|^{1/p-s} }{|h_r^{-1}(Q)|^{1/p-s}}.\end{equation*}
Otherwise define $d_{Q,W}=0$ and $b_{Q,W}(x)=0$ everywhere.
\vspace{5mm} 

\noindent {\bf Claim 5.} {\it $b_{Q,W}$ is a $\mathcal{A}^{bv}_{\beta,p,q}$-atom supported  on $W$. }
\vspace{5mm} 

Indeed 
\begin{eqnarray}
|b_{Q,W}|_{\mathcal{B}^\beta_{p,q}(W,\mathcal{H}_W, \mathcal{A}^{sz}_{\beta,p}) }  
&\leq& \big( \frac{|h^{-1}_rQ|} {|Q| }\big)^{1/p-s+\epsilon_r} \big( \frac{|Q| }{|h^{-1}_rQ|} \big)^{1/p-s+\epsilon_r} |W|^{1/p-\beta}|W|^{s-1/p} \nonumber \\
&\leq&   |W|^{1/p-\beta }  |W|^{s-1/p} =   |W|^{s-\beta}\nonumber.
\end{eqnarray}
This proves Claim 5.  
We have 
$$c^r_Q \Phi_r(a_Q)1_W =  c^r_Q d_{Q,W} b_{Q,W}.$$
For every $W \in \mathcal{H}^k$ and $r\in \Lambda$ and $j\in \mathbb{N}$ there exists at most one set $Q^r_j(W) \subset I_r$ such that  $Q^r_j(W) \in \mathcal{G}^j_r$, $W \subset h_r^{-1}(Q^r_j(W))$ and $W\in \mathcal{F}^k(h_r^{-1}(Q^r_j(W))$. If such set  does not exist define $Q^r_j(W)=\emptyset$.   We have 

\begin{eqnarray*}
\phi_{i_0,j_0,K}&=& \sum_{r\leq r_0}  \sum_{j\leq j_0}  \sum_{Q\in \mathcal{P}^j, Q \subset I_r }  \sum_{\substack{ k\leq K\\k\geq k_0(h_r^{-1}(Q))}}\sum_{W\in \mathcal{F}^k(h_r^{-1}(Q))} c_Q^r  \Phi_r(a_Q)1_W\\
&=&\sum_{r\leq r_0}  \sum_{j\leq j_0}  \sum_{Q\in \mathcal{P}^j, Q \subset I_r }  \sum_{\substack{ k\leq K\\k\geq k_0(h_r^{-1}(Q))}}\sum_{W\in \mathcal{F}^k(h_r^{-1}(Q))} c^r_Q d_{Q,W} b_{Q,W}\\
&=&\sum_{r\leq r_0}  \sum_{j\leq j_0}  \sum_{\substack{W\in \mathcal{H}^k\\k\leq K}} c^r_{Q^r_j(W)} d_{Q^r_j(W),W} b_{Q^r_j(W),W} \\
&=&\sum_{\substack{W\in \mathcal{H}^k\\k\leq K}}  \sum_{r\leq r_0}  \sum_{j\leq j_0}  c^r_{Q^r_j(W)} d_{Q^r_j(W),W} b_{Q^r_j(W),W} \\
 \end{eqnarray*}
Define
$$s_W =  \sum_{r \leq r_0}\sum_{j\leq j_0}    |c^r_{Q^r_j(W)} d_{Q^r_j(W),W}| $$
Note that these sums have a  {\it finite} number of terms. Then
$$b_W= \frac{1}{s_W} \sum_{r\leq r_0}   \sum_{j\leq j_0}  c^r_{Q^r_j(W)} d_{Q^r_j(W),W} b_{Q^r_j(W),W} $$
is  a $\mathcal{A}^{bv}_{\beta,p,q}$-atom supported  on $W$, since it is a convex combination of   $\mathcal{A}^{bv}_{\beta,p,q}$-atoms. We obtained a  
$\mathcal{B}^s_{p,q}(J,\mu,\mathcal{H},\mathcal{A}^{bv}_{\beta,p,q})$-representation 
$$\phi_{i_0,j_0,K}= \sum_{k\leq K} \sum_{W\in \mathcal{H}^k}   s_W b_W.$$
Now it remains to prove (\ref{ms}). 
Recall that  $\Crr{DRS2}^r=\max \{ (\Crr{DC2}^r)^{\epsilon},(\Crr{DGD2}^r)^{1/p}\}$. 
\vspace{5mm} 

\noindent {\bf Claim 6.} {\it  If   $Q\in \mathcal{P}^j$ and $W\in \mathcal{F}^i(h^{-1}_r(Q))$ then 
\begin{equation}\label{ccoovv} (\Crr{DRS2}^r)^{|j-k_0(h_r^{-1}(Q))|}(\Crr{DRS2}^r)^{i-k_0(h_r^{-1}(Q))} \leq  (\Crr{DRS2}^r)^{\gamma|j-i| + (1-\gamma)a_r}.\end{equation}  }
\vspace{5mm} 
Indeed, note that if  $i,j, k \in \mathbb{N}$, with $|j-k|\geq a\geq 0$ and $i\geq k$ then
\begin{eqnarray*}
 i-k+|j-k|&=& |i-k|+|j-k|  \\
&= &\gamma(|i-k|+|j-k|)+ (1-\gamma)(|i-k|+|j-k|)\\
&\geq& \gamma |i-j| + (1-\gamma)a. 
\end{eqnarray*}
In particular since $$|j-k_0(h_r^{-1}(Q)|=|k_0(Q)-k_0(h_r^{-1}(Q)|\geq a_r$$ and
$W\in \mathcal{F}^i(h^{-1}_r(Q))$ implies  $i\geq k_0(h_r^{-1}(Q))$ we have (\ref{ccoovv}). This proves Claim 6. 

Since $h_r^{-1}(Q^r_j(W))$ is a $(1-sp,\Crr{DGD1},\Crr{DGD2})$-regular set by (\ref{dom})  and (\ref{tam}), if (\ref{hiip1}) holds then  for every $i\leq K$ 
\begin{eqnarray} 
 && \Big( \sum_{W\in \mathcal{H}^i}  |s_W|^p  \Big)^{1/p} \nonumber \\
 &\leq&  \Big( \sum_{W\in \mathcal{H}^i} (\sum_{r\leq r_0}   \sum_{j \leq j_0}     | c^r_{Q^r_j(W)} d_{Q^r_j(W),W}|)^p \Big)^{1/p}
\nonumber \\
&\leq& \sum_{r\leq r_0}   \sum_{j \leq j_0}   \Big( \sum_{W\in \mathcal{H}^i}  | c^r_{Q^r_j(W)} d_{Q^r_j(W),W}|^p \Big)^{1/p}  
\nonumber \\
&\leq& \sum_{r\leq r_0}   \sum_{j \leq j_0}   \Big( \sum_{W\in \mathcal{H}^i} (\Crr{DRP}^r)^p \Big( \frac{|Q^r_j(W)| }{|h_r^{-1}(Q^r_j(W))|} \Big)^{\epsilon_r p}\Big( \frac{|W|}{|h_r^{-1}(Q^r_j(W))|}\Big)^{1-sp} |c_{Q^r_j(W)}^r|^p \Big)^{1/p}  
\nonumber \\
&\leq&  \sum_{r\leq r_0}    \sum_{j \leq j_0}  (\Crr{DC1}^r)^{\epsilon} \Crr{DRP}^r   \Big( \sum_{W\in \mathcal{H}^i}  (\Crr{DC2}^r)^{\epsilon p |j-k_0(h_r^{-1}(Q^r_j(W)))|}\Big( \frac{|W|}{|h_r^{-1}(Q^r_j(W))|}\Big)^{1-sp} |c_{Q^r_j(W)}^r|^p \Big)^{1/p}  
\nonumber \\
&\leq&   \sum_{r\leq r_0}   \sum_{j \leq j_0}  (\Crr{DC1}^r)^{\epsilon} \Crr{DRP}^r  \Big(  \sum_{Q\in \mathcal{G}^j_r }    (\Crr{DC2}^r)^{\epsilon p |j-k_0(h_r^{-1}(Q))|}   |c_{Q}^r|^p   \sum_{\substack{ W\in \mathcal{H}^i \\ Q^r_j(W)=Q \\ i\geq k_0(h_r^{-1}(Q))} }   \Big(\frac{|W|}{|h_r^{-1}(Q)|}\Big)^{1-sp}  \Big)^{1/p}  
\nonumber \\
&\leq&     \sum_{r\leq r_0}  \sum_{j \leq j_0} (\Crr{DC1}^r)^{\epsilon} \Crr{DRP}^r   \Big(   \sum_{\substack{Q\in \mathcal{G}^j_r \\ i\geq k_0(h_r^{-1}(Q))}}  (\Crr{DC2}^r)^{\epsilon p |j-k_0(h_r^{-1}(Q))|}  \Crr{DGD1}^r (\Crr{DGD2}^r)^{i-k_0(h_r^{-1}(Q))}  |c_{Q}^r|^p  \Big)^{1/p}  
\nonumber \\
&\leq&  \sum_{j \leq j_0}    \Crr{DRS2}^{\gamma |j-i|}   \sum_{r\leq r_0}  (\Crr{DC1}^r)^{\epsilon} \Crr{DRP}^r  ( \Crr{DGD1}^r)^{1/p} ( \Crr{DRS2}^r)^{(1-\gamma)a_r}  \big(   \sum_{Q\in \mathcal{G}^j_r}  |c_Q^r|^p \big)^{1/p}. \nonumber 
 \end{eqnarray}
 
 This is a convolution, so for $q\in [1,\infty)$
\begin{align*} 
&\Big( \sum_i  \big( \sum_{W\in \mathcal{H}^i}  |s_W|^p  \big)^{q/p}\Big)^{1/q} \nonumber \\ 
&\leq   \frac{2}{1-\Crr{DRS2}^{\gamma}}     \Big( \sum_j \big( \sum_r    (\Crr{DC1}^r)^{\epsilon} \Crr{DRP}^r  ( \Crr{DGD1}^r)^{1/p}  (\Crr{DRS2}^r)^{(1-\gamma)a_r} \big( \sum_{Q\in \mathcal{G}^j_r}  |c_Q^r|^p \big)^{1/p}\big)^{q} \Big)^{1/q} \nonumber \\
&\leq \frac{2 \Crr{DRS1}}{1-\Crr{DRS2}^{\gamma}}   \big( \sum_k (\sum_{Q \in \mathcal{P}^k}    |d_Q|^p)^{q/p} \big)^{1/q}.  
\end{align*}

 The case $q=\infty$ is similar. On the other hand,  if (\ref{hiip2}) holds then  for every $i\leq K$ 
\begin{eqnarray} 
 && \Big( \sum_{W\in \mathcal{H}^i}  |s_W|^p  \Big)^{1/p} \nonumber \\
 &\leq&   \Big( \sum_{W\in \mathcal{H}^i} (\sum_{r\leq r_0}   \sum_{j \leq j_0}     | c^r_{Q^r_j(W)} d_{Q^r_j(W),W}|)^p \Big)^{1/p}
\nonumber \\
&\leq&    \sum_{j \leq j_0}   \Big( \sum_{W\in \mathcal{H}^i}   (  \sum_{r\leq r_0}  |c^r_{Q^r_j(W)} d_{Q^r_j(W),W}| )^p \Big)^{1/p}  
\nonumber \\
&\leq&  N^{1/p'}   \sum_{j \leq j_0}     \Big( \sum_{W\in \mathcal{H}^i}   \sum_{r\leq r_0}  ( \Crr{DRP}^r)^p \Big( \frac{|Q^r_j(W)| }{|h_r^{-1}(Q^r_j(W))|} \Big)^{\epsilon_r p}\Big( \frac{|W|}{|h_r^{-1}(Q^r_j(W))|}\Big)^{1-sp} |c_{Q^r_j(W)}^r|^p \Big)^{1/p}  
\nonumber \\
&\leq&  N^{1/p'}  \sum_{j \leq j_0}    \Big( \sum_{W\in \mathcal{H}^i}   \sum_{r\leq r_0}  (\Crr{DC1}^r)^{\epsilon p} (\Crr{DRP}^r)^p   \Crr{DC2}^{\epsilon p |j-k_0(h_r^{-1}(Q^r_j(W)))|}\Big( \frac{|W|}{|h_r^{-1}(Q^r_j(W))|}\Big)^{1-sp} |c_{Q^r_j(W)}^r|^p \Big)^{1/p}  
\nonumber \\
&\leq& N^{1/p'}   \sum_{j \leq j_0}  \Big(    \sum_{r\leq r_0}      \sum_{Q\in \mathcal{G}^j_r }  (\Crr{DC1}^r)^{\epsilon p}( \Crr{DRP}^r)^p  (\Crr{DC2}^r)^{\epsilon p |j-k_0(h_r^{-1}(Q))|}   |c_{Q}^r|^p   \sum_{\substack{ W\in \mathcal{H}^i \\ Q^r_j(W)=Q \\ i\geq k_0(h_r^{-1}(Q))} }   \Big(\frac{|W|}{|h_r^{-1}(Q)|}\Big)^{1-sp}  \Big)^{1/p}  
\nonumber \\
&\leq&  N^{1/p'} \sum_{j \leq j_0}  \Big(  \sum_{r\leq r_0} ( \Crr{DC1}^r)^{\epsilon p} ( \Crr{DRP}^r)^p  \sum_{\substack{ Q\in \mathcal{G}^j_r\\ i\geq k_0(h_r^{-1}(Q))}}  (\Crr{DC2}^r)^{\epsilon p |j-k_0(h_r^{-1}(Q))|}  \Crr{DGD1}^r (\Crr{DGD2}^r)^{i-k_0(h_r^{-1}(Q))}  |c_{Q}^r|^p  \Big)^{1/p}  
\nonumber \\
&\leq& N^{1/p'}  \sum_{j \leq j_0} \Crr{DRS2}^{\gamma |j-i|}     \big(    \sum_{r\leq r_0}   ( \Crr{DC1}^r)^{\epsilon p} ( \Crr{DRP}^r)^p  \Crr{DGD1}^r  (\Crr{DRS2}^r)^{(1-\gamma)pa_r}    \sum_{Q\in \mathcal{G}^j_r}  |c_Q^r|^p \big)^{1/p}. \nonumber 
 \end{eqnarray}
 
  This is again a  convolution, so  for $q\in [1,\infty)$ we have 
\begin{align*} 
&\Big( \sum_i  \big( \sum_{W\in \mathcal{H}^i}  |s_W|^p  \big)^{q/p}\Big)^{1/q} \nonumber \\ 
&\leq   N^{1/p'}  \frac{2}{1-\Crr{DRS2}^{\gamma}}     \Big( \sum_j \big( \sum_r  ( ( \Crr{DC1}^r)^{\epsilon} \Crr{DRP}^r ( \Crr{DGD1}^r )^{1/p} ( \Crr{DRS2}^r)^{(1-\gamma)a_r})^p \sum_{Q\in \mathcal{G}^j_r}  |c_Q^r|^p \big)^{q/p} \Big)^{1/q} \nonumber \\
&\leq \frac{2 \Crr{DRS1}}{1-\Crr{DRS2}^{\gamma}}   \big( \sum_k (\sum_{Q \in \mathcal{P}^k}    |d_Q|^p)^{q/p} \big)^{1/q}.  
\end{align*}

and the  case $q=\infty$ is similar.

\subsection{Step 3.  Going back to $\mathcal{B}^s_{p,q}$} \label{rssec2}
 By Proposition \ref{besova} there is a $\mathcal{B}^s_{p,q}$-representation  
$$\Phi(f)= \sum_k \sum_{Q\in \mathcal{H}^k} z_Q  a_Q$$
such that 
\begin{align*}
&\Big(  \sum_i \Big(  \sum_{Q\in \mathcal{H}^i} |z_Q|^p    \Big)^{q/p}  \Big)^{1/q} \\
&\leq \Crr{GBS} \Big( \sum_i  \big( \sum_{W\in \mathcal{H}^i}  |s_W|^p  \big)^{q/p}\Big)^{1/q} \\
&\leq \frac{2 \Crr{GBS}  \Crr{DRS1}}{1-\Crr{DRS2}^{\gamma}}   \big( \sum_k (\sum_{Q \in \mathcal{P}^k}    |d_Q|^p)^{q/p} \big)^{1/q} \end{align*}

This completes the proof of Theorem \ref{key}.

\section{Controlling the Essential Spectral Radius}
\begin{proof}[Proof of Corollary \ref{ww}]  Given $\mathcal{R}\subset \mathcal{P}$,  Proposition \ref{canrep2} tell us that 
$$\pi_{\mathcal{G}}(f)=\sum_k \sum_{P\in \mathcal{P}^k, P\in \mathcal{R}} k_P^f a_P$$
is a $\mathcal{B}^s_{p,q}(I,m,\mathcal{P})$-representation of $\pi_{\mathcal{R}}(f)$ satisfying
$$\Big( \sum_k \big( \sum_{P\in \mathcal{P}^k, P\in \mathcal{R}} |k_P^f|^p \big)^{q/p} \Big)^{1/q}\leq \Crr{GC} |f|_{\mathcal{B}^s_{p,q}}.$$
If the pair 
$$(\mathcal{I},\sum_k \sum_{P\in \mathcal{P}^k, P\in \mathcal{R}} k_P^f a_P ),$$
has a  $(C,\gamma)$-regular slicing then by Theorem \ref{key} we have  that
$$|\Phi(\pi_{\mathcal{R}}(f))|_{\mathcal{B}^s_{p,q}}\leq  \Crr{GBS}\Crr{D}  C \Crr{GC} |f|_{\mathcal{B}^s_{p,q}}.$$
Applying this inequality for $\mathcal{R}=\mathcal{P}'$ and $\mathcal{R}=\mathcal{P}\setminus \mathcal{P}'$ we conclude the proof.
\end{proof}

\vspace{1cm}
\centerline{ \bf V. POSITIVE TRANSFER OPERATORS.}
\addcontentsline{toc}{chapter}{\bf V. POSITIVE TRANSFER OPERATORS.}
\vspace{1cm}

\section{Lasota-Yorke Inequality and the dynamics of $\Phi$ } 

\begin{proof}[Proof of Theorem \ref{t1}] Note that 
$$\pi_{\mathcal{P}'}(f)=\sum_k \sum_{P\in \mathcal{P}^k, P\in \mathcal{P}'} k_P^f a_P$$
is a $\mathcal{B}^s_{p,q}$-representation of $\pi_{\mathcal{P}'}(f)$. Moreover $f\mapsto k^f_P$ is a bounded linear functional in $(L^1)^\star$.  Denote its norm by $|k_P|_{(L^1)^\star}$. So since $\mathcal{P}'$ is finite 
$$\Big( \sum_k \big( \sum_{P\in \mathcal{P}^k, P\in \mathcal{P}'} |k_P^f|^p \big)^{q/p} \Big)^{1/q}\leq  \Big( \sum_k \big( \sum_{P\in \mathcal{P}^k, P\in \mathcal{P}'} |k_P^f|^p_{(L^1)^\star} \big)^{q/p} \Big)^{1/q} |f|_1.$$
The pair 
$$(\mathcal{I},\sum_k \sum_{P\in \mathcal{P}^k, P\in \mathcal{G}} k_P^f a_P ),$$
has a  $(\Crr{DRSFR},\gamma)$-regular slicing so by Theorem \ref{key} we have  that
$$|\Phi(\pi_{\mathcal{P}'}(f))|_{\mathcal{B}^s_{p,q}}\leq  \Crr{GBS}\Crr{D}   \Crr{DRSFR}  |f|_1.$$
Consequently Corollary \ref{ww} gives
\begin{eqnarray*}
|\Phi(f)|_{\mathcal{B}^s_{p,q}}&\leq&   |\Phi(\pi_{\mathcal{P}'}(f))|_{\mathcal{B}^s_{p,q}} +  |\Phi(\pi_{\mathcal{P}\setminus \mathcal{P}'}(f))|_{\mathcal{B}^s_{p,q}} \\
&\leq &  \Crr{GBS}\Crr{D} \Crr{DRSFR}  |f|_1  +     \Crr{GBS} \Crr{D} \Crr{DRSES}\Crr{GC}  |f|_{B^s_{p,q}}.
\end{eqnarray*} 
Using that $|\Phi(f)|_1\leq |f|_1$  and $\Crr{GBS} \Crr{D}   \Crr{DRSES} < 1$ one can easily get the Lasota-Yorke inequality for $\Phi^n$.
\end{proof}

\begin{proof}[Proof of Corollary  \ref{spec}] The methods we are going to use here are  standard, however, we provided them for the sake of compactness. \\

\noindent {\it Proof of A.} Since the  essential spectral radius of $\Phi$ is at most  $$\Crr{GBS}\Crr{D}  \Crr{DRSES} < 1$$ we have that every $$\lambda \in \sigma_{\mathcal{B}^s_{p,q}}(\Phi)$$ satisfying $ |\lambda|\geq ( \Crr{GBS} \Crr{D} \Crr{DRSES})^{1/2}$ is an isolated point of the spectrum that  is an eigenvalue with finite-dimensional generalized eigenspace. We claim that the spectral radius $r_{\mathcal{B}^s_{p,q}}$ of $\Phi$ is $1$.  Note that Lasota-Yorke inequality implies 
\begin{equation} \label{nopol} \sup_n |\Phi^n|_{\mathcal{B}^s_{p,q}}< \infty
\end{equation} 
 so $r_{\mathcal{B}^s_{p,q}}\leq 1$. On the other hand if $r_{\mathcal{B}^s_{p,q}} <  1$ then 
$$\lim_{n} \int |\Phi^n(f)| \  dm =0$$
for every $f\in \mathcal{B}^s_{p,q}$, but this is impossible since 
$$\int |\Phi^n(1)| \  dm=1$$
for every $n$.   Moreover if $(\Phi -\lambda I)^2f=0$ but $(\Phi -\lambda I)f\neq0$, with $|\lambda|=1$, then    $|\Phi^n f|_{B^s_{p,q}}$ diverges to infinity. This it is impossible.  In particular  if $\delta \in (0,1)$ is close enough to $1$ we have that $E$ is finite, non-empty and contained in $\mathbb{S}^1$. \\

\noindent {\it Proof of B.} This follows easily from $A$. using  arguments with spectral projections, since  the spectral projections on  the generalized eigenspace of  $\lambda \in E$ is indeed  a projection on the  eigenspace of $\lambda$.\\

\noindent {\it Proof of C.}  Let $f\geq 0$ with $f\in \mathcal{B}^s_{p,q}$. The Lasota-Yorke inequality implies that there is $N$ such that 
$$|\Phi^n(f)|_{\mathcal{B}^s_{p,q}}\leq 2C|f|_1$$
for every $n\geq N$. The ball of center $0$ and radius $2C$ in $\mathcal{B}^s_{p,q}$ is compact in $L^1$, so we can find a convergent subsequence in $L^1$
$$\rho= \lim_k   \frac{1}{n_k} \sum_{n<  n_k}  \Phi^n(f),$$
with $\rho \in \mathcal{B}^s_{p,q}$. Of course the positivity of $\Phi$ implies $\rho\geq 0$. Note also that $\Phi(\rho)=\rho$ and $\int \rho \ dm=\int f \ dm$. We conclude the proof choosing $f=1$. \\

\noindent {\it Proof of D.} Let $u\in L^1$ such that $\Phi(u)=\lambda u,$ with $|\lambda|=1$. Since $B^s_{p,q}$ is dense in $L^1$ one can choose $u_i \in \mathcal{B}^s_{p,q}$ such that $u_i$ converges to $u$ in the topology of  $L^1$.  Due the Lasota-Yorke inequality for every large $i$ there is $n_i\geq i$ such that 
$$|\Phi^{n}(u_i)|_{\mathcal{B}^s_{p,q}}\leq 2C |u|_1$$
for $n\geq n_i$.  Here $C$ depends only on $\Phi$. Note also that 
$$\lim_i  |\Phi^{n_i}(\frac{1}{\lambda^{n_i}}u_i)- u|_1=0.$$
The ball of center $0$ and radius $2C |u|_1$ in $\mathcal{B}^s_{p,q}$ is compact in $L^1$, so $u\in \mathcal{B}^s_{p,q}$ and $|u|_{\mathcal{B}^s_{p,q}}\leq 2C|u|_1$.  \\

\noindent Finally note that $\Phi(|u|)(x)\geq |u|(x)$ almost everywhere. On the other hand $$\int \Phi(|u|) \ dm = \int |u| \ dm,$$
so $\Phi(|u|)= |u|$ almost everywhere. Denote
$$s(x) = (sgn \ u)(x)\begin{cases} \frac{u(x)}{|u(x)|}      & if  \ u(x)\neq 0  \\
                                                     0                          & if  \ u(x)=0.\end{cases}$$
Note that 
 $$\sum_r  g_r(x)s(h_r(x))|u|(h_r(x))=\lambda s(x) |u|(x).$$
 and
  $$\sum_r  g_r(x)|u|(h_r(x))= |u|(x).$$
  So
  $$\big|  \sum_r  g_r(x)s(h_r(x))|u|(h_r(x))   \big|=  \sum_r  g_r(x)|u|(h_r(x)).$$
 for $m$-almost everywhere $x$,  which implies that for $m$-almost everywhere $x$ we have  $s(h_r(x))=s(h_{r'}(x))$ for every $r,r'$ such that $x\in J_r\cap J_{r'}$, $g_r(x)g_{r'}(x)\neq 0$ , $s(h_r(x))s(h_{r'}(x))\neq 0$ and consequently $s(h_r(x))=\lambda s(x)$ for $x\in J_r$ satisfying $g_r(x)\neq 0$ and $s(h_r(x))\neq 0$.  In particular $s^k(h_r(x))=\lambda^k s^k(x)$ under the same conditions, with $k\in \mathbb{Z}$ (here we define $s^k(x)=0$ whenever $s(x)=0$), and  it is easy to see that $\Phi(s^k|u|)=\lambda^k s^k|u|.$  Of course  $u_k=s^k|u| \in L^1$, so $u_k\in \mathcal{B}^s_{p,q}$. \\
  
  \noindent {\it Proof of E.}  If $\lambda \in E$ is not a root of unit then $\lambda^k \neq \lambda^{k'}$ for $k\neq k'$. So \{$\lambda^k\}_{k\in \mathbb{N}}$ is an infinite set. But by D. this set is contained in $E$, that is finite. This is a contradiction.  
\end{proof}

\begin{lemma} \label{erg} Let $\mu$ be a finite invariant measure of $T$ such that $\mu$ is absolutely continous with respect to $m$. Let $\Omega_\mu =\{ x\colon \  \rho(x)> 0\}$, where $\rho$ is the density of $\mu$ with respect to $m$. Then there is  an ergodic probability measure $\hat{\mu}$, absolutely continuous with respect to $m$, such that $\Omega_{\hat{\mu}}\subset \Omega_\mu$.
\end{lemma}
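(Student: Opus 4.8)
## Proof plan for Lemma \ref{erg}

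The plan is to extract an ergodic component of $\mu$ using the ergodic decomposition and then verify that at least one ergodic piece is still absolutely continuous with support inside $\Omega_\mu$. First I would normalize, assuming without loss of generality that $\mu$ is a probability. Applying the ergodic decomposition theorem to $(T,\mu)$, we write $\mu = \int \mu_\omega \, d\mathbb{P}(\omega)$, where each $\mu_\omega$ is a $T$-invariant ergodic probability and the map $\omega \mapsto \mu_\omega$ is measurable. Since $\mu \ll m$, for $\mathbb{P}$-almost every $\omega$ the component $\mu_\omega$ is also absolutely continuous with respect to $m$: indeed, if $m(A) = 0$ then $\mu(A) = 0$, so $\int \mu_\omega(A) \, d\mathbb{P}(\omega) = 0$, forcing $\mu_\omega(A) = 0$ for $\mathbb{P}$-a.e.\ $\omega$; a standard exhaustion argument over a countable generating algebra (here $\cup_k \mathcal{P}^k$, which generates $\mathbb{A}$ by G7) upgrades this to: for $\mathbb{P}$-a.e.\ $\omega$, $\mu_\omega \ll m$. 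Pick such an $\omega$ and set $\hat\mu = \mu_\omega$, with density $\hat\rho$.

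Next I would check the support containment $\Omega_{\hat\mu} \subset \Omega_\mu$. Since $\hat\mu = \mu_\omega \leq c\, \mu$ fails in general, one instead argues as follows: the set $\Omega_\mu^c = \{\rho = 0\}$ has $\mu(\Omega_\mu^c) = \int_{\Omega_\mu^c} \rho \, dm = 0$, hence $\int \mu_\omega(\Omega_\mu^c)\, d\mathbb{P}(\omega) = 0$, so for $\mathbb{P}$-a.e.\ $\omega$ we have $\mu_\omega(\Omega_\mu^c) = 0$, i.e.\ $\hat\rho = 0$ $m$-a.e.\ on $\Omega_\mu^c$. Therefore $\{\hat\rho > 0\} \subset \Omega_\mu$ up to an $m$-null set, which is exactly $\Omega_{\hat\mu} \subset \Omega_\mu$ as a set equality modulo $m$-null sets (the natural interpretation in this measure-space setting). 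Intersecting the full-measure sets from this step and the previous one, both conditions hold simultaneously for a positive-$\mathbb{P}$-measure set of $\omega$, so such $\hat\mu$ exists.

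The main obstacle I anticipate is purely a matter of care rather than depth: the paper's phase space $(I,\mathbb{A},m)$ is an abstract measure space, not necessarily a standard Borel or Polish space, so invoking the ergodic decomposition theorem in the usual form is not automatic. However, since $\cup_k \mathcal{P}^k$ is a countable family generating $\mathbb{A}$ (G7), the measure algebra of $(I,m)$ is separable and $T$ is measurable, which is enough to put us in (or reduce to) the standard setting where the ergodic decomposition applies — or, alternatively, one can bypass decomposition entirely and argue directly: consider the collection of $T$-invariant absolutely continuous probabilities $\nu$ with $\Omega_\nu \subset \Omega_\mu$, partially ordered so as to minimize $m(\Omega_\nu)$; using weak-$*$-type compactness of the relevant set of densities (or Zorn's lemma together with the fact that $m(\Omega_\nu)$ takes values in $[0,1]$ and a minimizing sequence can be averaged) one produces a minimal element, and minimality of its support forces ergodicity, since any non-trivial invariant subset of positive measure would yield an invariant absolutely continuous probability with strictly smaller support. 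Either route works; I would present the direct argument since it keeps the proof self-contained within the measure-space-with-good-grid framework of the paper.
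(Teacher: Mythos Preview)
Your primary route via ergodic decomposition has a real gap. The assertion that for $\mathbb{P}$-a.e.\ $\omega$ the component $\mu_\omega$ is absolutely continuous with respect to $m$ is \emph{false} in general: take $T$ the identity on $[0,1]$ with $m$ Lebesgue and $\mu=m$; the ergodic decomposition is $\mu=\int_0^1 \delta_x\,dx$ and no component is absolutely continuous. Your ``exhaustion over the countable generating algebra $\cup_k\mathcal{P}^k$'' cannot repair this, because those sets all have \emph{positive} $m$-measure and so say nothing about $m$-null sets; the $\mathbb{P}$-null exceptional set of $\omega$'s really does vary with the null set $A$ in an uncountable way. The lemma is therefore not a general ergodic-theory statement --- it genuinely needs the spectral input of Corollary~\ref{spec}, and the identity map is excluded from the paper's setting precisely because its $1$-eigenspace is all of $L^1$.

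Your fallback minimization argument can be made to work, but not as sketched: averaging a minimizing sequence \emph{enlarges} supports, and Zorn gives nothing without lower bounds for chains. What does succeed is to invoke Corollary~\ref{spec}.D (every $L^1$ density fixed by $\Phi$ lies in $\mathcal{B}^s_{p,q}$ with a uniform norm bound) together with the $L^1$-compactness of $\mathcal{B}^s_{p,q}$-balls, extract an $L^1$-limit of a minimizing sequence of densities, and use a.e.\ convergence along a further subsequence to get $m(\Omega_\nu)\le\liminf m(\Omega_{\nu_n})$; the infimum is then attained and minimality forces ergodicity exactly as you say. The paper's own proof is shorter: argue by contradiction that if no such ergodic $\hat\mu$ exists, repeated splitting of $\mu$ yields infinitely many invariant absolutely continuous measures with pairwise disjoint supports, whose densities (in $\mathcal{B}^s_{p,q}$ by Corollary~\ref{spec}.D) are linearly independent fixed points of $\Phi$, contradicting the finite dimension of the $1$-eigenspace from Corollary~\ref{spec}.A. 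Either way the quasi-compactness is doing the work, and your write-up should make that dependence explicit rather than presenting the lemma as pure measure theory.
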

\begin{proof} Suppose that such $\hat{\mu}$ does not exist.  Then  it is easy to construct an infinite sequence of subsets $\Omega_\mu=A_0 \supset A_1 \supset  A_2 \supset \cdots $ such that $m(A_{i +1})< m(A_i)$ and $\mu_i(S)=\mu(S\cap ( A_i\setminus A_{i+1}))$ is a (no vanishing)  finite invariant measure for $T$.  Note that $\Omega_{\mu_i}\cap \Omega_{\mu_{i+1}}=0$, so if $\mu_i= \rho_i \ m$ then by Corollary  \ref{spec}.D we have that $\{\rho_i\}_i$ is  linearly independent family of functions in $\mathcal{B}^s_{p,q}$ satisfying  $\Phi(\rho_i)=\rho_i$, so the $1$-eigenspace has infinite dimension. This contradicts Corollary  \ref{spec}.A.\end{proof}

\begin{proof}[Proof of Corollary \ref{phy}] If $\mu_i=\rho_i \ m$,with $\rho_i \geq 0$,  $i=1,\dots, n$,  are distinct ergodic invariant probabilities of $T$ then by Corollary  \ref{spec}.D we have that $\rho_i \in \mathcal{B}^s_{p,q}$. Since they are ergodic and distinct we have that the sets $\Omega_i =\{ x \in I\colon \ \rho_i(x)> 0 \}$ are pairwise disjoint, so  $\rho_i$ are linearly independent on $\mathcal{B}^s_{p,q}$. Since these functions belong to the $1$-eigenspace of $\Phi$ we have that $n$ is bounded by the (finite) dimension of this eigenspace. Form now one let  $\mu_i$, $i=1,\dots, n$ be the list of all distinct ergodic invariant probabilities of $T$ absolutely continuous with respect to $m$.  We claim that 
$$\Omega^c =   \big(\cup_i  \cup_{j\geq 0}T^{-j}(\Omega_i)\big)^c$$
satisfies $m(\Omega^c)=0$. Indeed, otherwise consider $u=1_{\Omega^c}$.    Now we use an argument similar to the proof of Corollary  \ref{spec}.D. Since $B^s_{p,q}$ is dense in $L^1$ one can choose $u_i \in \mathcal{B}^s_{p,q}$ such that 
$$|u_i-u|_1\leq \frac{1}{i}$$
Due the Lasota-Yorke inequality for every large $i$ there is $n_i\geq i$ such that 
$$|\Phi^{n}(u_i)|_{\mathcal{B}^s_{p,q}}\leq 2C |u|_1$$
for $n\geq n_i$, so there is $N_i$ such that for $N\geq N_i$ we have
$$|\frac{1}{N}\sum_{n\leq N} \Phi^{n}(u_i)|_{\mathcal{B}^s_{p,q}}\leq 3C |u|_1$$
  Here $C$ depends only on $\Phi$. The ball of center $0$ and radius $3C |u|_1$ in $\mathcal{B}^s_{p,q}$ is compact in $L^1$, so we can use the Cantor diagonal argument to show that there is a sequence $M_k$ and $v_i\in \mathcal{B}^s_{p,q}$ such that  $|v_i|_{\mathcal{B}^s_{p,q}}\leq 3C|u|_1$  satisfying
$$\lim_k |\frac{1}{M_k}\sum_{n\leq M_k} \Phi^{n}(u_i)-v_i|_1=0$$
for every $i$, 
and $\Phi(v_i)=v_i$, $v_i\geq 0$. Using a similar argument we can assume that $\lim_i v_i=v$ on $L^1$, with $v\in \mathcal{B}^s_{p,q}$. Note also that 
$$|\frac{1}{M_k}\sum_{n\leq M_k} \Phi^{n}(u_i)-\frac{1}{M_k}\sum_{n\leq M_k} \Phi^{n}(u)|_{L^1}\leq \frac{1}{i},$$
so we conclude that 
$$\lim_i |\frac{1}{M_k}\sum_{n\leq M_k} \Phi^{n}(u)-v|_1=0.$$
Since $T^{-1}(\Omega^c)\subset \Omega^c$ we have that  $\Phi^j(u)$ vanishes  outside $\Omega^c$ for every $j$, so $v=0$ outside $\Omega^c$ but 
$$\int v \ dm = m(\Omega^c) > 0.$$
Note that since $\mu=v m$ is an invariant finite measure  then by Lemma \ref{erg} there is ergodic probability measure $\hat{\mu}$, absolutely continuous with respect to $m$ such that $\Omega_{\hat{\mu}}\subset \Omega^c$, which contradicts the definition of $\Omega^c$. 
\end{proof}
 
 \section{Positivity, structure of  invariant measures and decay of correlations} 
 \begin{proposition} Assume $\Crr{A000}-\Crr{A8}$. Then there is $\Cll{frr}$ that depends only on $\mathcal{P}'$ such that the following holds.  Suppose that  $f \in \mathcal{B}^s_{p,q}$ has  a $\mathcal{B}^s_{p,q}$-representation
 $$f= \sum_{Q\in \mathcal{P}}d_Q^0  a_Q$$
 such that $d_Q^0 \geq 0$ for every $Q\in \mathcal{P}$. Then $\Phi^i(f)$ has a $\mathcal{B}^s_{p,q}$-representation
 $$\sum_{Q\in \mathcal{P}}d_Q^i  a_Q$$
 with $d_Q^i\geq 0$ satisfying 
\begin{align*}&\Big( \sum_k \big( \sum_{Q\in \mathcal{P}^k}|d_Q^{i}|^p \big)^{q/p} \Big)^{1/q} \nonumber\\
&\leq  \frac{ \Crr{GBS}  \Crr{D}\Crr{DRSFR}\Crr{frr} }{1- \Crr{GBS}  \Crr{D}\Crr{DRSES} } |f|_1  + (\Crr{GBS}  \Crr{D}\Crr{DRSES} )^i \Big( \sum_k \big( \sum_{Q\in \mathcal{P}^k}|d_Q^{0}|^p \big)^{q/p} \Big)^{1/q}.\end{align*}
 \end{proposition}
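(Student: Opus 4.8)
The plan is an induction on $i$ that lifts the Lasota--Yorke bootstrap of Theorem~\ref{t1} to the level of Souza coefficients while keeping track of their signs. Write $a_i=\big(\sum_k(\sum_{Q\in\mathcal{P}^k}|d_Q^i|^p)^{q/p}\big)^{1/q}$ for the $\ell^q(\ell^p)$-norm of the coefficient sequence of the representation of $\Phi^i(f)$ produced at step $i$ (with the usual conventions when $q=\infty$). For $i=0$ this is the given representation, and the asserted inequality holds trivially since its first summand is nonnegative. So suppose inductively that $\Phi^i(f)=\sum_{Q\in\mathcal{P}}d_Q^i a_Q$ with $d_Q^i\geq 0$.

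For the inductive step I would split this representation along the finite family $\mathcal{P}'$ of Assumption~$\Crr{A1}$: put $g_1^i=\sum_{Q\in\mathcal{P}'}d_Q^i a_Q$ (a finite sum) and $g_2^i=\sum_{Q\in\mathcal{P}\setminus\mathcal{P}'}d_Q^i a_Q$, so that $\Phi^i(f)=g_1^i+g_2^i$ with $g_1^i,g_2^i\in\mathcal{B}^s_{p,q}$ both nonnegative. By Definition~\ref{c2} the pair $(\mathcal{I},\sum_{Q\in\mathcal{P}'}d_Q^i a_Q)$ has a $\Crr{DRSFR}$-regular slicing and $(\mathcal{I},\sum_{Q\in\mathcal{P}\setminus\mathcal{P}'}d_Q^i a_Q)$ has a $\Crr{DRSES}$-regular slicing, and since all $d_Q^i\geq 0$, clause (iii) of regular slicing lets us choose these slicings with nonnegative coefficients. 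Assumption~$\Crr{A8}$ makes the potentials $g_r$ be $\mathcal{B}^s_{p,q}$-positive, so the ``moreover'' clause of Theorem~\ref{key} yields nonnegative representations of $\Phi(g_1^i)$ and of $\Phi(g_2^i)$ whose $\ell^q(\ell^p)$-norms are at most $\Crr{GBS}\Crr{D}\Crr{DRSFR}\,b_i$ and $\Crr{GBS}\Crr{D}\Crr{DRSES}\,a_i$ respectively, where $b_i=\big(\sum_k(\sum_{Q\in\mathcal{P}^k\cap\mathcal{P}'}|d_Q^i|^p)^{q/p}\big)^{1/q}$ (using that restricting the index set does not increase the $\ell^q(\ell^p)$-norm). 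Concatenating the two produces a nonnegative representation $\sum_Q d_Q^{i+1} a_Q$ of $\Phi^{i+1}(f)=\Phi(g_1^i)+\Phi(g_2^i)$ with $a_{i+1}\leq \Crr{GBS}\Crr{D}\Crr{DRSFR}\,b_i+\Crr{GBS}\Crr{D}\Crr{DRSES}\,a_i$.

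The key estimate is $b_i\leq \Crr{frr}\,|f|_1$, and I expect this to be the main point: one cannot simply invoke the canonical functionals of Proposition~\ref{canrep2}, since those coefficients need not be nonnegative, which would destroy the positivity we are propagating. Instead I would argue by pointwise comparison of positive atoms. For each $Q\in\mathcal{P}'$ one has $g_1^i\geq d_Q^i|Q|^{s-1/p}$ a.e. on $Q$ (the remaining terms of the finite sum $g_1^i$ are nonnegative there), while $0\leq g_1^i\leq g_1^i+g_2^i=\Phi^i(f)$; integrating over $Q$ and using $\int\Phi^i(f)\,dm=\int f\,dm=|f|_1$ (Assumption~$\Crr{A5}$ together with $f\geq 0$) gives $d_Q^i|Q|^{s-1/p+1}\leq|f|_1$, i.e. $d_Q^i\leq|Q|^{1/p-s-1}|f|_1$. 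Since $\mathcal{P}'$ and each $\mathcal{P}^k$ are finite families, summing this yields $b_i\leq \Crr{frr}|f|_1$ with $\Crr{frr}=\big(\sum_k(\sum_{Q\in\mathcal{P}^k\cap\mathcal{P}'}|Q|^{1-sp-p})^{q/p}\big)^{1/q}$, a quantity depending only on $\mathcal{P}'$ (and the fixed data $s,p,q$), as required.

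Feeding this back gives the linear recursion $a_{i+1}\leq \Crr{GBS}\Crr{D}\Crr{DRSFR}\Crr{frr}\,|f|_1+\Crr{GBS}\Crr{D}\Crr{DRSES}\,a_i$. Since $\Crr{GBS}\Crr{D}\Crr{DRSES}\leq 6\Crr{GBS}\Crr{D}\Crr{DRSES}\Crr{GC}<1$ by Assumption~$\Crr{A5}$ (recall $\Crr{GC}\geq 1$), I would unroll from $a_0$ and sum the geometric series to obtain $a_i\leq \frac{\Crr{GBS}\Crr{D}\Crr{DRSFR}\Crr{frr}}{1-\Crr{GBS}\Crr{D}\Crr{DRSES}}|f|_1+(\Crr{GBS}\Crr{D}\Crr{DRSES})^i a_0$, which is exactly the claimed bound. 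Everything outside the third paragraph is the same bootstrap already carried out for Theorem~\ref{t1} and Corollary~\ref{ww}; the only genuinely new ingredient is the positivity-preserving $L^1$-control of the finitely many coefficients supported on $\mathcal{P}'$, achieved by comparing with positive atoms rather than through dual functionals.
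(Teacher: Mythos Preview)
Your proof is correct and follows essentially the same route as the paper's: split the nonnegative representation at each step along $\mathcal{P}'$, bound the finitely many coefficients on $\mathcal{P}'$ in terms of $|\Phi^i(f)|_1=|f|_1$ by integrating the pointwise inequality $d_Q^i a_Q\leq g_1^i$, apply Theorem~\ref{key} (with its positivity clause, available thanks to Assumption~$\Crr{A8}$ and clause~(iii) of regular slicing) to each piece, and iterate the resulting recursion. The paper integrates over all of $I$ rather than over $Q$, and leaves $\Crr{frr}$ implicit, but the argument is the same; your explicit formula $\Crr{frr}=\big(\sum_k(\sum_{Q\in\mathcal{P}^k\cap\mathcal{P}'}|Q|^{(1/p-s-1)p})^{q/p}\big)^{1/q}$ and your remark on why the canonical functionals of Proposition~\ref{canrep2} are unsuitable here are both nice additions.
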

 \begin{proof} This proof is similar to the proof of Theorem \ref{t1}.  Consider the function $f_1\in \mathcal{B}^s_{p,q}$ given by   $\mathcal{B}^s_{p,q}$-representation 
 
$$f_1=\sum_k \sum_{P\in \mathcal{P}^k, P\in \mathcal{P}'} d_P^0 a_P.$$
Note that for $P \in \mathcal{P}'$ 
$$|d_P^0|= |P|^{s-1/p+1}\int d_P^0 a_P \ dm \leq  |P|^{s-1/p+1} \int f_1 \ dm =  |P|^{s-1/p+1} |f_1|_1.$$
So since $\mathcal{P}'$ is finite  there is $\Crr{frr}$, that depends only on $\mathcal{P}'$, such that 
$$\Big( \sum_k \big( \sum_{P\in \mathcal{P}^k, P\in \mathcal{P}'} |d_P^0|^p \big)^{q/p} \Big)^{1/q}\leq   \Crr{frr} |f_1|_1.$$
The pair 
$$(\mathcal{I},\sum_k \sum_{P\in \mathcal{P}^k, P\in \mathcal{P}'} d_P^0 a_P ),$$
has a  $(\Crr{DRSFR},\gamma)$-regular slicing so by Theorem \ref{key} there is a $\mathcal{B}^s_{p,q}$-representation
$$\Phi(g)= \sum_k \sum_{Q\in \mathcal{P}}   d_Q'' a_Q$$
with $d_Q''\geq 0$ for every $Q$, such that 
$$ \Big( \sum_k \big( \sum_{Q\in \mathcal{P}^k}|d_Q''|^p \big)^{q/p} \Big)^{1/q}\leq \Crr{GBS}  \Crr{D}\Crr{DRSFR}\Crr{frr} |f_1|_1. $$
Moreover consider $f_2\in \mathcal{B}^s_{p,q}$ with $\mathcal{B}^s_{p,q}$-representation
$$f_2=\sum_k \sum_{P\in \mathcal{P}^k, P\in \mathcal{P}\setminus \mathcal{P}'} d_P^0 a_P.$$
The pair 
$$(\mathcal{I},\sum_k \sum_{P\in \mathcal{P}^k, P\in \mathcal{P}\setminus \mathcal{P}'} d_P^0 a_P ),$$
has a  $(\Crr{DRSES},\gamma)$-regular slicing so by Theorem \ref{key} there is a $\mathcal{B}^s_{p,q}$-representation
$$\Phi(f_2)= \sum_k \sum_{Q\in \mathcal{P}}   d_Q''' a_Q$$
with $d_Q'''\geq 0$ for every $Q$, such that 
\begin{align*} 
 \Big( \sum_k \big( \sum_{Q\in \mathcal{P}^k}|d_Q'''|^p \big)^{q/p} \Big)^{1/q}&\leq \Crr{GBS}  \Crr{D}\Crr{DRSES} \Big( \sum_k \big( \sum_{P\in \mathcal{P}^k, P\in \mathcal{P}\setminus \mathcal{P}'} |d_P^0|^p \big)^{q/p} \Big)^{1/q} \\
 &\leq \Crr{GBS}  \Crr{D}\Crr{DRSES} \Big( \sum_k \big( \sum_{P\in \mathcal{P}^k} |d_P^0|^p \big)^{q/p} \Big)^{1/q}
 \end{align*} 
 Then $\Phi(f)$ as a $\mathcal{B}^s_{p,q}$-representation 
 $$\Phi(f)= \sum_k \sum_{Q\in \mathcal{P}^k} d^1_Qa_Q$$
where $d^1_Q= d_Q''+ d_Q'''$ satisfy
\begin{align*} &\Big( \sum_k \big( \sum_{Q\in \mathcal{P}^k}|d_Q^{1}|^p \big)^{q/p} \Big)^{1/q} \nonumber\\
&\leq   \Crr{GBS}  \Crr{D}\Crr{DRSFR}\Crr{frr}  |f|_1  +  \Crr{GBS}  \Crr{D}\Crr{DRSES}  \Big( \sum_k \big( \sum_{Q\in \mathcal{P}^k}|d_Q^{0}|^p \big)^{q/p} \Big)^{1/q}.\end{align*}
The conclusion of the proposition easily follows by  an induction argument on $i$ with  the above inequality and that  fact that $|\Phi(f)|_1\leq |f|_1$ for every $f\in L^1$. 
 \end{proof} 
 
 \begin{corollary}\label{4567} Suppose that  $f \in \mathcal{B}^s_{p,q}$ has  a $\mathcal{B}^s_{p,q}$-representation
 $$f= \sum_{Q\in \mathcal{P}}d_Q^0  a_Q$$
 such that $d_Q^0 \geq 0$ for every $Q\in \mathcal{P}$. Then the set
 $$\{   \frac{1}{n+1}  \sum_{i=0}^{n}\Phi^i(f)    \}_{n \in \mathbb{N}} $$
 is pre-compact in  $L^p$ and every accumulation point $\rho$ of this sequence belongs to $\mathcal{B}^s_{p,q}$ and it has a $\mathcal{B}^s_{p,q}$-representation 
 $$\rho= \sum_{Q\in \mathcal{P}}d_Q^\infty  a_Q$$
 satisfying $d_Q^\infty \geq 0$ for every $Q\in \mathcal{P}$ and
 $$\Big( \sum_k \big( \sum_{Q\in \mathcal{P}^k}|d_Q^{\infty}|^p \big)^{q/p} \Big)^{1/q}\leq  \frac{ \Crr{GBS}  \Crr{D}\Crr{DRSFR}\Crr{frr} }{1- \Crr{GBS}  \Crr{D}\Crr{DRSES} }|f|_1.  $$
  Moreover
 $$\int \rho \ dm =  \int f \ dm \ and \ \Phi(\rho)=\rho.$$
 \end{corollary}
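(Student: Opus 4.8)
The plan is to combine the non-negative coefficient estimate furnished by the preceding Proposition with the $L^p$-compactness of the sets $(S_C^{sz})^+$ from Proposition \ref{compactness}. First I would fix the non-negative $\mathcal{B}^s_{p,q}$-representation $f=\sum_Q d_Q^0 a_Q$ from the hypothesis and, for each $i$, the non-negative representation $\Phi^i(f)=\sum_Q d_Q^i a_Q$ supplied by the preceding Proposition. Since $\Crr{GC}\geq 1$, Assumption $\Crr{A5}$ forces $\Crr{GBS}\Crr{D}\Crr{DRSES}<1$, so the estimate in that Proposition may be summed geometrically in $i$: writing $\rho_n=\tfrac1{n+1}\sum_{i=0}^n\Phi^i(f)$, it has the non-negative $\mathcal{B}^s_{p,q}$-representation $\sum_Q d_Q^{(n)}a_Q$ with $d_Q^{(n)}=\tfrac1{n+1}\sum_{i=0}^n d_Q^i\geq 0$, and since $d\mapsto\big(\sum_k(\sum_{Q\in\mathcal{P}^k}|d_Q|^p)^{q/p}\big)^{1/q}$ is a norm, the triangle inequality yields
$$\Big(\sum_k\big(\sum_{Q\in\mathcal{P}^k}|d_Q^{(n)}|^p\big)^{q/p}\Big)^{1/q}\leq \frac{\Crr{GBS}\Crr{D}\Crr{DRSFR}\Crr{frr}}{1-\Crr{GBS}\Crr{D}\Crr{DRSES}}|f|_1+\frac{1}{(n+1)(1-\Crr{GBS}\Crr{D}\Crr{DRSES})}\Big(\sum_k\big(\sum_{Q\in\mathcal{P}^k}|d_Q^0|^p\big)^{q/p}\Big)^{1/q}.$$
The right-hand side is bounded by a constant $C^{1/q}$ independent of $n$, so every $\rho_n$ lies in $(S_C^{sz})^+$, which is compact in $L^p$ by Proposition \ref{compactness}; hence $\{\rho_n\}_n$ is pre-compact in $L^p$.

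Next I would analyse an accumulation point $\rho=\lim_j\rho_{n_j}$ in $L^p$. Each coefficient $d_Q^{(n_j)}$ is bounded (by the sequence norm above) and $\mathcal{P}=\cup_k\mathcal{P}^k$ is countable, so a diagonal extraction gives a further subsequence, still written $n_j$, with $d_Q^{(n_j)}\to d_Q^\infty\geq 0$ for every $Q$. Fatou's lemma, applied inside each level $\mathcal{P}^k$ and then over $k$, gives
$$\Big(\sum_k\big(\sum_{Q\in\mathcal{P}^k}|d_Q^\infty|^p\big)^{q/p}\Big)^{1/q}\leq \liminf_j\Big(\sum_k\big(\sum_{Q\in\mathcal{P}^k}|d_Q^{(n_j)}|^p\big)^{q/p}\Big)^{1/q}\leq \frac{\Crr{GBS}\Crr{D}\Crr{DRSFR}\Crr{frr}}{1-\Crr{GBS}\Crr{D}\Crr{DRSES}}|f|_1.$$
To see that $\rho=\sum_Q d_Q^\infty a_Q$, I would use that the elements of $\mathcal{P}^k$ are pairwise disjoint (G4) and $|a_Q|_p=|Q|^s\leq\Crr{G2}^{ks}$, so $|\sum_{Q\in\mathcal{P}^k}(d_Q^{(n_j)}-d_Q^\infty)a_Q|_p\leq\Crr{G2}^{ks}\big(\sum_{Q\in\mathcal{P}^k}|d_Q^{(n_j)}-d_Q^\infty|^p\big)^{1/p}$; for each fixed $k$ this tends to $0$ as $j\to\infty$ and it is dominated by a fixed multiple of $\Crr{G2}^{ks}$, which is summable in $k$ because $\Crr{G2}<1$ and $s>0$. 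Dominated convergence for the series in $k$ then gives $\rho_{n_j}\to\sum_Q d_Q^\infty a_Q$ in $L^p$, whence $\rho=\sum_Q d_Q^\infty a_Q$; this is a non-negative $\mathcal{B}^s_{p,q}$-representation of $\rho$ with the asserted bound, so in particular $\rho\in\mathcal{B}^s_{p,q}$.

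For the last two claims I would again invoke Assumption $\Crr{A5}$. Since $\int\Phi(g)\,dm=\int g\,dm$ for $g\in\mathcal{B}^s_{p,q}$, iteration gives $\int\rho_n\,dm=\int f\,dm$, and as $m(I)=1$ convergence in $L^p$ implies convergence in $L^1$, so $\int\rho\,dm=\int f\,dm$. Moreover $\Phi$ extends to a bounded operator on $L^1$ with $|\Phi(g)|_1\leq|g|_1$, hence $|\Phi^{n+1}(f)-f|_1\leq 2|f|_1$; from $\Phi(\rho_n)=\rho_n+\tfrac1{n+1}\big(\Phi^{n+1}(f)-f\big)$ we get $\Phi(\rho_n)-\rho_n\to 0$ in $L^1$, and applying the bounded operator $\Phi$ to $\rho_{n_j}\to\rho$ and passing to the limit yields $\Phi(\rho)=\rho$.

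The only genuinely non-automatic point is the middle step. The $(S_C^{sz})^+$-compactness merely places $\rho$ in \emph{some} such set with a finite $C$; to recover the \emph{sharp} bound $\frac{\Crr{GBS}\Crr{D}\Crr{DRSFR}\Crr{frr}}{1-\Crr{GBS}\Crr{D}\Crr{DRSES}}|f|_1$ one really needs the diagonal extraction, the lower semicontinuity supplied by Fatou, and the dominated-convergence identification of the limiting function with the series $\sum_Q d_Q^\infty a_Q$; everything else is routine.
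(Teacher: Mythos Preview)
Your proof is correct and follows the same strategy the paper has in mind: combine the preceding Proposition with the $L^p$-compactness of $(S_C^{sz})^+$ from Proposition~\ref{compactness}, exactly as in the proof of Corollary~\ref{spec}.C. The paper's own proof is a single sentence pointing to that argument, so in fact you have supplied considerably more detail than the paper does; in particular, the diagonal extraction together with Fatou and the dominated-convergence identification of $\rho$ with $\sum_Q d_Q^\infty a_Q$ is precisely what is needed to upgrade mere membership in some $(S_C^{sz})^+$ to the \emph{sharp} coefficient bound $\frac{\Crr{GBS}\Crr{D}\Crr{DRSFR}\Crr{frr}}{1-\Crr{GBS}\Crr{D}\Crr{DRSES}}|f|_1$, and your remark that this is the only non-automatic point is well taken.
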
 
 \begin{proof} The proof is quite similar to the proof of Corollary  \ref{spec}.C.
 \end{proof}
 

\begin{proof}[Proof of Theorem \ref{acim}] Let $\rho \in L^1$, $\rho\geq 0$ that satisfies (\ref{co}).  There exists $f_n \in L^1$ such that $$f_k= \sum_{i\leq i_k} c_i^k 1_{S^k_i},$$ where $c^i_k\geq 0$ and $S^k_i \mathbb{A}$, and $\lim_k |\rho-f_k|_1=0$. Due G7. the algebra generated by $\cup_k \mathcal{P}^k$ generated the $\sigma$-algebra $\mathbb{A}$. In particular for every $\epsilon > 0$ and $S\in \mathbb{A}$ there is $\hat{S}$ in the algebra generated by $\cup_k \mathcal{P}^k$ such that $m(S\triangle \hat{S}) < \epsilon$. Consequently there is $g_k$ such that 
$$g_k= \sum_{i\leq j_k} c_i^k 1_{W^k_i},$$ where $q^i_k\geq 0$ and $W^k_i \in \cup_k \mathcal{P}^k$, satisfying 
$\lim_n |f_k-g_k|_1=0$, so  $\lim_n |\rho-g_k|_1=0$. and we may assume  $|g_k|_1\leq 2|\rho|_1+1$. Note that 
 $$\Big| \frac{1}{n+1}  \sum_{i=0}^{n}\Phi^i(g_k)  -  \rho\Big|_1 \leq |g_k-\rho|_1.$$
By Corollary \ref{4567} there are $\rho_k$ such that
$$\rho_k= \sum_{Q\in \mathcal{P}}d_Q^k  a_Q$$
 satisfying $d_Q^k \geq 0$ for every $Q\in \mathcal{P}$, and
\begin{eqnarray}\label{dccc}
\Big( \sum_j \big( \sum_{Q\in \mathcal{P}^j}|d_Q^{k}|^p \big)^{q/p} \Big)^{1/q}\leq  \frac{ \Crr{GBS}  \Crr{D}\Crr{DRSFR}\Crr{frr} }{1- \Crr{GBS}  \Crr{D}\Crr{DRSES} }( 2|\rho|_1+1),\end{eqnarray} 
and moreover $\Phi(\rho_k)=\rho_k.$ and 
  $$|\rho_k -  \rho|_1 \leq |g_k-\rho|_1.$$
In particular $\lim_k |\rho_k - \rho |_1 =0$. Due (\ref{dccc}) and Proposition \ref{compactness}.B one can find a representation
$$\rho= \sum_{Q\in \mathcal{P}}d_Q^\infty  a_Q$$
 satisfying $d_Q^\infty \geq 0$ for every $Q\in \mathcal{P}$, and such that (\ref{dccc}) holds for $k=\infty$.
\end{proof}
 
%
%

\begin{proof}[Proof of Corollary  \ref{ergodic}]  By Corollary \ref{spec}.C there is $\rho_0 \geq 0$ satisfying  (\ref{co}). Suppose that there is $S\in \mathbb{A}$ such that $S$ is $T$-invariant ($T^{-1}S=S$ up to subsets with zero  $\rho_0 m$ measure) and
\begin{eqnarray}\label{medida}   0< \int 1_S \ \rho_0 \ dm < 1.\end{eqnarray}
Then $\Phi(\rho_01_S)=\rho_01_S$ and $\Phi(\rho_01_{I\setminus S})=\rho_01_{I\setminus S}$. By  Theorem \ref{acim} and (\ref{medida}) we have that $\{ x\colon  \ \rho_0(x)1_S(x) > 0\}$ and  $\{ x\colon  \ \rho_0(x) 1_{I\setminus S}(x)> 0\}$  are disjoint subsets that are both non-empty countable unions of elements of $\cup_k \mathcal{P}^k$ (up to subsets of zero $m$-measure). Choose  $Q,P \in \cup_k \mathcal{P}^k$ such that 
$$Q\subset \{ x\colon  \ \rho_0(x) 1_S(x)> 0\} \ and \ P\subset \{ x\colon  \ \rho_0(x) 1_{I\setminus S}(x)> 0\}$$
By the transitivity  there is $n\geq 0$ such that $m(P\cap T^{-n}Q)> 0$. This contradicts the $T$-invariance of $S$. So $\rho_0$ is ergodic. This proves A. Now suppose that $\rho_0 m$ and $\rho_1 m$ are two distinct ergodic measures. Then $\{ x\colon  \ \rho_0(x)  > 0\}$ and  $\{ x\colon  \ \rho_1(x) > 0\}$  are disjoint and we can use an argument as above  and  arrive into a contradiction. This proves C.  Then B. follows from Corollary \ref{phy}. To prove C. note that by Corollary \ref{spec} we already know that $E= \sigma_{\mathcal{B}^s_{p,q}}(\Phi)\cap\mathbb{S}^1$ is a finite set such that $\lambda\in E$ implies $\lambda^{-1}\in E$. So if we prove that $\lambda_1,\lambda_2 \in E$ implies $\lambda_1\lambda_2 \in E$ then we show that $E$ is a cyclic group. Indeed, if $\Phi(u_i)=\lambda_i u_i$, $i=1,2$, $|u_i|_1=1$,  then we saw in the proof of Corollary \ref{spec}.D that $u_i=s_i(x)\rho_0(x)$, where $s_i(x)\in \mathbb{S}^1\cup \{0\}$ and for $m$-almost every $x$ we have $s_i(h_r(x))=\lambda_i s_i(x)$ provided $x\in J_r$, $g_r(x)\neq 0$ and $s_i(h_r(x))\neq 0$. Consequently $s_1(h_r(x))s_2(h_r(x))=\lambda_1 \lambda_2 s_1(x)s_2(x)$ under the same assumptions and one can easily see that $u=s_1s_2 \rho_0$ satisfies $\Phi(u)=\lambda_1\lambda_2  u$, so $\lambda_1\lambda_2\in E$. 
\end{proof}

%
%

\begin{proof}[Proof of Corollary  \ref{mixing}]  Let $\lambda \in E$ and $\Phi(u)=\lambda u$, with $|u|=1$. In the proof of Corollary  \ref{ergodic} we got  $u(x)=s(x)\rho_0(x)$, where $s(x)\in \mathbb{S}^1\cup \{0\}$ and for $m$-almost every $x$ we have $s(h_r(x))=\lambda s(x)$ provided $x\in J_r$,  $g_r(x)\neq 0$ and $s(h_r(x))\neq 0$. There is $k> n_0$ such that $\lambda^k=1$, so  $\Phi^k(u)= u$ and  $s(T^k(x))=s(x)$ for $\rho_0 m$-almost every $x$.  But $\Crr{A10}$ and Corollary \ref{ergodic} imply that $T^k$ is ergodic with respect to $\rho_0 m$, so $s$ is constant $\rho_0 m$-almost  everywhere and consequently $\lambda=1$.  So $E={1}$. The exponential decay of correlations follows from Corollary \ref{spec}.B.
\end{proof}

\section{Almost sure invariance principle.}

\begin{proof}[Proof of Corollary \ref{asip1}] By  Proposition \ref{besov-mult} in  \cite{smania-besov} we have that $e^{iv_1(x)  t}\in \mathcal{B}^{1/p}_{p,\infty}$ whenever $v_1 \in \mathcal{B}^{1/p}_{p,\infty}$ is real valued and  the pointwise  multiplier $M_t(w)=e^{iv_1(x)  t}w$ is a bounded operator in $B^s_{p,q}$ for every real $t$.
We have that  for $t$ small enough
\begin{align*}
|M_{t}f|_{\mathcal{B}^{s}_{p,q}}&= |(e^{iv_1    t}-1)f|_{\mathcal{B}^{s}_{p,q}} + |f|_{\mathcal{B}^{s}_{p,q}}  \\
&\leq (\frac{ |e^{iv_1    t}-1|_{\mathcal{B}^{1/p}_{p,\infty}}}{1-\Crr{G2}^{1/p-s}} +|e^{iv_1 t}-1|_\infty+1)   |f|_{\mathcal{B}^{s}_{p,q}} \\
&\leq  (\frac{C |t| |v_1|_{\mathcal{B}^{1/p}_{p,\infty}}}{1-\Crr{G2}^{1/p-s}} +2)   |f|_{\mathcal{B}^{s}_{p,q}}\\
&\leq  (\frac{C |t| |v_1  |_{\mathcal{B}^{1/p}_{p,\tilde{q}}}}{1-\Crr{G2}^{1/p-s}} +2)   |f|_{\mathcal{B}^{s}_{p,q}}\\
&\leq  3 |f|_{\mathcal{B}^{s}_{p,q}}.\end{align*} 
On the other hand if $v_2\in M(\mathcal{B}^s_{p.q})$ then there is $C$ such that 
$$|v_2f|_{\mathcal{B}^s_{p,q}}\leq C|f|_{\mathcal{B}^s_{p,q}}.$$
and using   the power series of $e^{iv_2t}$ one can easily see that $e^{iv_2t} \in M(\mathcal{B}^s_{p.q})$ and for $t$ small enough
$$|e^{itv_2} f|_{\mathcal{B}^s_{p,q}}\leq e^{Ct}|f|_{\mathcal{B}^s_{p,q}}\leq 2 |f|_{\mathcal{B}^s_{p,q}}.$$
Consequently for  $t$ small enough
$$|e^{itv}f|_{\mathcal{B}^{s}_{p,q}} \leq 6|f|_{\mathcal{B}^{s}_{p,q}}.$$
Consider the  operator $\Phi_t(f)=\Phi(e^{ivt}f)$. Note that 
$$|\Phi_t(f)|_1\leq |f|_1.$$
By  Theorem \ref{t1} we have that for $t$ small
$$|\Phi_t(f)|_{\mathcal{B}^s_{p,q}}\leq C |f|_1+6\Crr{GBS} \Crr{D}\Crr{DRSES}\Crr{GC}|f|_{\mathcal{B}^s_{p,q}}.$$
for some $C\geq 0$ and every $f\in \mathcal{B}^s_{p,q}$.
So if $6\Crr{GBS} \Crr{D}\Crr{DRSES}\Crr{GC} < 1$ we have 
$$|\Phi^n_t(f)|_{\mathcal{B}^s_{p,q}}\leq \frac{C}{1-6\Crr{GBS} \Crr{D}\Crr{DRSES}\Crr{GC}} |f|_1+(6\Crr{GBS} \Crr{D}\Crr{DRSES}\Crr{GC})^n|f|_{\mathcal{B}^s_{p,q}}.$$
for $t$ small enough and every $n$.  In particular
$$|\Phi^n_t|_{\mathcal{B}^s_{p,q}}\leq   \frac{CK_{t_0}}{1-6\Crr{GBS} \Crr{D}\Crr{DRSES}\Crr{GC}}+1  $$
for $t$ small enough and every $n$. Here $K_p$ is as in Proposition \ref{inc}.  Recall that by Proposition \ref{besov-lp} in  \cite{smania-besov}  we have $\rho_0 \in B^{s}_{p,q}\subset L^t$, where 
$$t = \frac{p}{1-sp} > 1,$$
and $v_1  \in B^{1/p}_{p,\infty} \subset L^u(m)$ for every $u \in [1,\infty)$, so since $v_2 \in M(B^{s}_{p,q})\subset L^\infty(m)$ we conclude that $v \in L^b(\rho_0 m)$ for every $b\in [1,\infty)$. 
By Corollary \ref{spec}.B we have that the sequence $v , v \circ T ,\dots$ satisfies the assumptions $(I)$ of Theorem 2.1 in Gou{\"e}zel \cite{gou1}. In particular the limit in (\ref{sigma}) indeed converges and if $\sigma > 0$ we have that (\ref{series}) satisfies the Central Limit Theorem with average $0$ and variance $\sigma$ and the Almost Sure Invariance Principle with every  error exponent 
$$\delta > \frac{b}{4b-4},$$
for every $b\in [1,\infty)$, so we can choose every $\delta > 1/4$.
\end{proof}

 \vspace{1cm}
\centerline{ \bf VI.  HOW TO DO IT.}
\addcontentsline{toc}{chapter}{\bf VI.  HOW TO DO IT.}
\vspace{1cm}

\section{Dynamical Slicing.}

\begin{proposition} \label{kkey} There is $\Crr{GSR}$, that depends only the good grid $\mathcal{P}$, with the following property. Let $\{I_r\}_{r\in \Lambda}$ be a countable family of pairwise disjoint  $(1-\beta p,\Crr{rp},t)$-strongly  regular domains in $(I,m,\mathcal{P})$ and $\alpha_r>  0$, for every $r\in \Lambda$. Let 
$$g=\sum_{k\geq t} \sum_{Q\in \mathcal{P}^k}  d_Qa_Q  $$
be a $\mathcal{B}^s_{p,q}$- representation, where $a_Q$ is the standard $(s,p)$-Souza's atom supported on $Q$. Assume that
$$T=\sup_{\substack{Q\in \mathcal{P}^k\\k\geq t}} \sum_{_{Q\cap I _r\neq \emptyset }} \alpha_r.$$
Then for every $r\in \Lambda$ there is a $\mathcal{B}^s_{p,q}$-representation
\begin{equation}\label{eqeee} g\cdot 1_{I_r}=\sum_k \sum_{\substack{ Q\in \mathcal{P}^k   \\Q\subset I_r}} c_Q^r a_Q,\end{equation} 
satisfying 
\begin{align}\label{eqeeee} \Big(  \sum_k  \big(\sum_r  \alpha_r^p \sum_{\substack{ Q\in \mathcal{P}^k   \\Q\subset I_r}}  |c_Q^r|^p  \big)^{q/p} \Big)^{1/q} \nonumber\\
\leq \Crr{GSR} T \Crr{rp}^{1/p}   \Big(  \sum_k \big( \sum_{Q\in \mathcal{P}^k}  |d_Q|^p \big)^{q/p} \Big)^{1/q}.  \end{align} 
\end{proposition}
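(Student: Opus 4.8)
The plan is to split the problem into two layers. First, each element $Q\in\mathcal{P}^k$ with $k\geq t$ that meets $I_r$ must be re-decomposed into atoms supported \emph{inside} $I_r$, using the strong-regularity of $I_r$. Second, the re-decomposed pieces coming from all the $Q$'s (and all the $r$'s) must be reassembled and the weighted $\ell^p(\ell^q)$-norm on the left of \reff{eqeeee} controlled. The main work is in the first layer; the second is essentially a bookkeeping/triangle-inequality argument, using that the family $\{I_r\}$ is pairwise disjoint and the supremum $T$ controls the multiplicity of the overlaps $Q\cap I_r\neq\emptyset$ at each level.

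Concretely, first I would fix $r$ and a single atom $a_Q$ appearing in the representation of $g$, with $Q\in\mathcal{P}^k$, $k\geq t$, and $Q\cap I_r\neq\emptyset$. Since $I_r$ is a $(1-\beta p,\Crr{rp},t)$-strongly regular domain, there is a family $\mathcal{F}^j(Q\cap I_r)\subset\mathcal{P}^j$, $j\geq k_0(Q\cap I_r)$, of pairwise disjoint sets whose union is $Q\cap I_r$ and with $\sum_{P\in\mathcal{F}^j(Q\cap I_r)}|P|^{1-\beta p}\leq \Crr{rp}|Q|^{1-\beta p}$. Then $a_Q\cdot 1_{I_r}=\sum_j\sum_{P\in\mathcal{F}^j(Q\cap I_r)} |Q|^{s-1/p} 1_P$, and writing $1_P=|P|^{1/p-s}a_P$ this becomes $\sum_j\sum_P |Q|^{s-1/p}|P|^{1/p-s}a_P$, a Besov-atom-type decomposition of $a_Q 1_{I_r}$ supported inside $I_r$. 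Summing over the (at most $\mult1$-bounded, but here really just: finitely many at each level after fixing $Q$) relevant pieces and using the strong-regularity estimate with exponent $1-sp$ in place of $1-\beta p$ — here one needs $s<\beta$, which is exactly Assumption $\Crr{A000}$, so that the geometric series over $j$ converges — one obtains that $a_Q 1_{I_r}$ is (a bounded multiple of $\Crr{rp}^{1/p}|Q|^{\,\beta-s}$ times) a $(s,\beta,p,q)$-Besov atom supported on $Q\cap I_r$, or more precisely a sum of Souza atoms whose coefficient $\ell^p(\ell^q)$-norm is controlled by $\Cll[name]{}\,\Crr{rp}^{1/p}$ times $|Q|^{s-1/p}$-type weight. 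This is the step I expect to be the main obstacle: getting the exact power of $\Crr{rp}$ (namely $1/p$) and a clean geometric summation over levels, and making sure the bound is uniform so that it can be fed into the next step. It is essentially a repackaging of Proposition \ref{besova} (from Besov to Souza) together with the strong-regularity hypothesis.

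Second, I would set $c_Q^r$ to be the aggregate Souza coefficients obtained by superposing, over all $Q$, the decompositions of $d_Q a_Q 1_{I_r}$ built above; since the $\mathcal{F}^j(Q\cap I_r)$ for distinct $Q\in\mathcal{P}^k$ are disjoint (the $Q$'s at a fixed level are disjoint), for a fixed $r$ the sets $P$ arising do not collide within a level beyond the grid multiplicity $\mult1$, so the resulting $\sum_k\sum_{Q\subset I_r}c_Q^r a_Q$ is a genuine $\mathcal{B}^s_{p,q}$-representation of $g\cdot 1_{I_r}$. Then I would estimate $\big(\sum_k(\sum_r\alpha_r^p\sum_{Q\subset I_r}|c_Q^r|^p)^{q/p}\big)^{1/q}$: expanding $|c_Q^r|^p$ in terms of $|d_W|^p$ via the level-wise estimate from step one, the crucial point is that for each target level $k$ and each $P\in\mathcal{P}^k$, the number of pairs $(r, W)$ with $W\supset$ (some ancestor relevant to) $P$ and $W\cap I_r\neq\emptyset$ contributing is controlled by $T$ (which bounds $\sum_{Q\cap I_r\neq\emptyset}\alpha_r$ uniformly in $Q$), so after a Minkowski/triangle inequality in $\ell^{q/p}$ and a geometric summation over the level gap $k - k_0$, the weight $\alpha_r^p$ is absorbed and one picks up a factor $T$ and $\Crr{rp}$, giving exactly \reff{eqeeee} with $\Crr{GSR}$ depending only on $\mult1$ and the grid ratios $\Crr{G1},\Crr{G2}$. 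Finally, positivity is automatic: every coefficient produced ($|Q|^{s-1/p}|P|^{1/p-s}$ times $d_Q\geq 0$, after the reduction to $d_Q\geq 0$ which one may assume by working with $|d_Q|$) is nonnegative, so $c_Q^r\geq 0$ whenever $d_Q\geq 0$.
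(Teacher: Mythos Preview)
Your approach is viable but substantially different from the paper's. The paper's proof is two lines: it applies an external pointwise-multiplier result (Proposition~\ref{besov-pm1} from \cite{smania-besov}) to the single function $g\cdot\sum_r\alpha_r 1_{I_r}$, obtaining one representation $\sum_Q c_Q a_Q$ with the property that $c_Q\neq0$ forces $Q\subset I_r$ for a unique $r$; then it sets $c_Q^r=c_Q/\alpha_r$, and the weighted sum telescopes exactly: $\sum_r\alpha_r^p\sum_{Q\subset I_r}|c_Q^r|^p=\sum_Q|c_Q|^p$. The weight-absorption trick (push $\alpha_r$ into the multiplier, divide it out at the end) is the whole content of the proof, and it converts the problem into the unweighted multiplier estimate, where $T$ is simply the $L^\infty$-type bound on the multiplier symbol at scale $\geq t$. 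Your direct decomposition of each $a_Q 1_{I_r}$ via strong regularity, followed by aggregation, is essentially what lives inside that external proposition; so you are reproving it from scratch, which is fine but longer.

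One point in your Step~2 needs sharpening. You write that ``the number of pairs $(r,W)$ \dots\ is controlled by $T$'', but $T$ is a weighted sum, not a count, and for fixed $P$ the index $r$ is anyway uniquely determined (since $P\subset I_r$ and the $I_r$ are disjoint). What actually delivers the factor $T$ in your route is this: after you bound, for each $Q\in\mathcal{P}^j$ and each $r$ with $Q\cap I_r\neq\emptyset$, the level-$k$ coefficient mass by $\Crr{rp}\,\Crr{G2}^{(k-j)(\beta-s)p}|d_Q|^p$, you sum over $r$ and use $\sum_{r:\,Q\cap I_r\neq\emptyset}\alpha_r^p\leq\bigl(\sum_{r:\,Q\cap I_r\neq\emptyset}\alpha_r\bigr)^p\leq T^p$ (valid since $p\geq1$). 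Taking $p$-th roots and summing the geometric series in $k-j$ then gives exactly $\Crr{GSR}\,T\,\Crr{rp}^{1/p}$. With that correction your argument goes through; the paper's trick just sidesteps this estimate entirely.
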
 
\begin{proof} If we apply  Proposition \ref{besov-pm1} in S. \cite{smania-besov} for the family of functions $\alpha_r 1_{I_r}$ we conclude that  there is  $\Crr{GSR}$, that depends only on the good grid $\mathcal{P}$, with the following property. There is a 
$\mathcal{B}^s_{p,q}$-representation
$$g\cdot \sum_r  \alpha_r  1_{I_r}=\sum_k \sum_{Q\in \mathcal{P}^k} c_Q a_Q$$
satisfying 
$$\Big(  \sum_k  \big(\sum_{Q\in \mathcal{P}^k}  |c_Q|^p  \big)^{q/p} \Big)^{1/q}\leq \Crr{GSR} T\Crr{rp}^{1/p}   \Big(  \sum_k \big( \sum_{Q\in \mathcal{P}^k}  |d_Q|^p \big)^{q/p} \Big)^{1/q}  $$
and if $c_Q\neq 0$ then $Q\subset I_r$, for some $r\in \Lambda$. Such $r$ in our case must be unique, since the sets in the family $\{I_r\}$ are pairwise disjoint.  So if $Q\subset I_r$ define $c_Q^r=c_Q/\alpha_r.$ 
 It is easy to see that (\ref{eqeee}) and (\ref{eqeeee}) hold. 
\end{proof} 

\begin{proof}[Proof of Theorem \ref{pa}]   Let 
$$f=\sum_{k} \sum_{Q\in \mathcal{P}^k}  d_Qa_Q  $$
be a $\mathcal{B}^s_{p,q}$- representation, where $a_Q$ is the standard $(s,p)$-Souza's atom supported on $Q$.  Consider also the  $\mathcal{B}^s_{p,q}$-representations 
$$f_1= \sum_{k\geq   t} \sum_{Q\in \mathcal{P}^k}  d_Qa_Q.$$
$$f_2= \sum_{k< t} \sum_{Q\in \mathcal{P}^k}  d_Qa_Q.$$

\noindent {\bf Step I.}   We can apply  Proposition \ref{kkey}  to the family  $\{ I_r\}_{r\in \Lambda}$, taking $g=f_1$, $\alpha_r=1$  and $T=M$. So for each $r\in \Lambda$  there  is a $\mathcal{B}^s_{p,q}$-representation 
$$f_1 \cdot 1_{I_r}=\sum_k \sum_{\substack{ Q\in \mathcal{P}^k   \\Q\subset I_r}} c^r_Q a_Q$$
satisfying 
$$\Big(  \sum_k  \big(\sum_r \sum_{\substack{ Q\in \mathcal{P}^k   \\Q\subset I_r}}  |c_Q^r|^p  \big)^{q/p} \Big)^{1/q}\leq \Crr{GSR} M \Crr{rp1f}^{1/p}   \Big(  \sum_k \big( \sum_{Q\in \mathcal{P}^k}  |d_Q|^p \big)^{q/p} \Big)^{1/q}. $$

Note that  
\begin{align*}&\Big( \sum_j\big( \sum_{r\in \Lambda} \Theta_r (\sum_{\substack{ Q\in \mathcal{P}^j \\  Q \subset I_r }}  |c_Q^r|^p )^{1/p}  \big)^{q}\Big)^{1/q}\\
&\leq   \Big( \sum_{r \in \Lambda}  \Theta_r^{p'}   \Big)^{1/p'}  \Big( \sum_j\big(    \sum_{r \in \Lambda}\sum_{\substack{ Q\in \mathcal{P}^j \\  Q \subset I_r }}  |c_Q^r|^p \big)^{q/p}  \Big)^{1/q}\\
&\leq    \Big( \sum_{r \in \Lambda} \Theta_r^{p'}   \Big)^{1/p'}    \Crr{GSR} M \Crr{rp1f}^{1/p}   \Big(  \sum_k \big( \sum_{Q\in \mathcal{P}^k}  |d_Q|^p \big)^{q/p} \Big)^{1/q}.
\end{align*}
So
$$(\mathcal{I},  \sum_{k\geq t} \sum_{Q\in \mathcal{P}^k}  d_Qa_Q )$$
has a $\Crr{GSR}   \Crr{f1f}$-slicing.\\

\noindent {\bf Step II.} Note that since $I_r$, $r\in \Lambda$,  is a $(1-\beta p,\Crr{rp1f},t )$-strongly  regular  domain then  $I_r$ is also a $(1-\beta p,\Cll{rp44},0)$-strongly  regular domain, where 
$$\Crr{rp44}=\Crr{rp1f}  \Crr{G1}^{-t}$$ depends only on the grid $\mathcal{P}$, $\Crr{rp1f}$ and $t$.   Now   apply  Proposition \ref{kkey}  to the family  $\{ I_r\}_{r\in \Lambda}$, with $g=f_2$ and taking $\alpha_r=1$ and  $T=\#\Lambda$. We conclude that  for every $i\in \Lambda$ there exists a  $\mathcal{B}^s_{p,q}$-representation
$$f_2 \cdot 1_{I_r}=\sum_k \sum_{\substack{ Q\in \mathcal{P}^k   \\Q\subset I_r}} c^r_Q a_Q$$
satisfying 
$$\Big(  \sum_k  \big(\sum_r \sum_{\substack{ Q\in \mathcal{P}^k   \\Q\subset I_r}}  |c_Q^r|^p  \big)^{q/p} \Big)^{1/q}\leq (\#\Lambda)  \Crr{GSR}  \Crr{rp44}^{1/p}   \Big(  \sum_k \big( \sum_{Q\in \mathcal{P}^k}  |d_Q|^p \big)^{q/p} \Big)^{1/q}. $$
The same argument as in Step I gives
\begin{align*}&\Big( \sum_j\big( \sum_{r\in \Lambda} \Theta_r  (\sum_{\substack{ Q\in \mathcal{P}^j \\  Q \subset I_r }}  |c_Q^r|^p )^{1/p}  \big)^{q}\Big)^{1/q}\\
&\leq    \Big( \sum_{r \in \Lambda}  \Theta_r^{p'}  \Big)^{1/p'}  (\#\Lambda)  \Crr{GSR}  \Crr{rp44}^{1/p}      \Big(  \sum_k \big( \sum_{Q\in \mathcal{P}^k}  |d_Q|^p \big)^{q/p} \Big)^{1/q},
\end{align*}
so we conclude  that 
$$(\mathcal{I},  \sum_{k< t} \sum_{Q\in \mathcal{P}^k}  d_Qa_Q )$$
has a $\Crr{GSR}   \Crr{f2f}$-slicing.
\end{proof}

\begin{proof}[Proof of Theorem \ref{pa2}]   Let 
$$f=\sum_{k} \sum_{Q\in \mathcal{P}^k}  d_Qa_Q  $$
be a $\mathcal{B}^s_{p,q}$- representation, where $a_Q$ is the standard $(s,p)$-Souza's atom supported on $Q$.  Consider also the  $\mathcal{B}^s_{p,q}$-representations 
$$f_1= \sum_{k\geq   t} \sum_{Q\in \mathcal{P}^k}  d_Qa_Q.$$
$$f_2= \sum_{k< t} \sum_{Q\in \mathcal{P}^k}  d_Qa_Q.$$

\noindent {\bf Step I.}   We can apply  Proposition \ref{kkey}  to the family  $\{ I_r\}_{r\in \Lambda_1}$, taking $g=f_1$, $\alpha_r=\Theta_r$.  So for each $r\in \Lambda$  there  is a $\mathcal{B}^s_{p,q}$-representation 
$$f_1 \cdot 1_{I_r}=\sum_k \sum_{\substack{ Q\in \mathcal{P}^k   \\Q\subset I_r}} c^r_Q a_Q$$
satisfying 
\begin{align*}&N^{1/p'} \Big( \sum_j\big( \sum_{r\in \Lambda} \Theta_r^p \sum_{\substack{ Q\in \mathcal{P}^j \\  Q \subset I_r }}  |c_Q^r|^p   \big)^{q/p}\Big)^{1/q}\\
&\leq   N^{1/p'} T  \Crr{GSR}  \Crr{rp1f}^{1/p}   \Big(  \sum_k \big( \sum_{Q\in \mathcal{P}^k}  |d_Q|^p \big)^{q/p} \Big)^{1/q}.
\end{align*}
So
$$(\mathcal{I},  \sum_{k\geq t} \sum_{Q\in \mathcal{P}^k}  d_Qa_Q )$$
has a $\Crr{GSR}   \Crr{f1a}$-slicing. \\

\noindent {\bf Step II.} As in Step II. of the proof of Proposition \ref{pa} we have that  $I_r$, $r\in \Lambda$,  is a $(1-\beta p,\Cll{rp4},0)$-strongly  regular domain, where 
$$\Crr{rp4}=\Crr{rp1f}  \Crr{G1}^{-t}$$ depends only on the grid $\mathcal{P}$, $\Crr{rp1f}$ and $t$.   Now   apply  Proposition \ref{kkey}  to the family  $\{ I_r\}_{r\in \Lambda}$, with $g=f_2$ and taking $\alpha_r=\Theta_r$. We conclude that  for every $i\in \Lambda_1$ there exists a  $\mathcal{B}^s_{p,q}$-representation
$$f_2 \cdot 1_{I_r}=\sum_k \sum_{\substack{ Q\in \mathcal{P}^k   \\Q\subset I_r}} c^r_Q a_Q$$
satisfying 
\begin{align*}
&N^{1/p'} \Big(  \sum_k  \big(\sum_r \Theta_r  \sum_{\substack{ Q\in \mathcal{P}^k   \\Q\subset I_r}}  |c_Q^r|^p  \big)^{q/p} \Big)^{1/q}\\
&\leq N^{1/p'} (\#\Lambda) ( \sup_{r\in \Lambda} \Theta_r) \Crr{GSR}  \Crr{rp4}^{1/p}   \Big(  \sum_k \big( \sum_{Q\in \mathcal{P}^k}  |d_Q|^p \big)^{q/p} \Big)^{1/q},
\end{align*}
so we obtain that 
$$(\mathcal{I},  \sum_{k< t} \sum_{Q\in \mathcal{P}^k}  d_Qa_Q )$$
has a $\Crr{GSR}   \Crr{f2a}$-slicing.
\end{proof}

\begin{proof}[Proof of Theorem \ref{tt1}]  For every $r\in \Lambda$ we apply (as usual) Proposition \ref{kkey}, this time for the family with a unique element $\{ I_r\}$, with $g=f$ and taking $\alpha_r=1$ and  $T=1$. We conclude that  for every $r\in \Lambda$ there exists a  $\mathcal{B}^s_{p,q}$-representation
$$f \cdot 1_{I_r}=\sum_k \sum_{\substack{ Q\in \mathcal{P}^k   \\Q\subset I_r}} c^r_Q a_Q$$
satisfying 
$$\Big(  \sum_k  \big(\sum_{\substack{ Q\in \mathcal{P}^k   \\Q\subset I_r}}  |c_Q^r|^p  \big)^{q/p} \Big)^{1/q}\leq  \Crr{GSR}  \Crr{rp36}^{1/p}   \Big(  \sum_k \big( \sum_{Q\in \mathcal{P}^k}  |d_Q|^p \big)^{q/p} \Big)^{1/q}. $$
Of course
\begin{align*} &\Big(  \sum_k  \big(  \sum_r   \Theta_r \big(\sum_{\substack{ Q\in \mathcal{P}^k   \\Q\subset I_r}}  |c_Q^r|^p \big)^{1/p}  \big)^{q} \Big)^{1/q}\\
&\leq \sum_r \Theta_r \Big(  \sum_k  \big(\sum_{\substack{ Q\in \mathcal{P}^k   \\Q\subset I_r}}  |c_Q^r|^p \big)^{q/p} \Big)^{1/q}\\
&\leq  \Crr{GSR}  \Crr{rp36}^{1/p}   \big( \sum_r \Theta_r\big) \Big(  \sum_k \big( \sum_{Q\in \mathcal{P}^k}  |d_Q|^p \big)^{q/p} \Big)^{1/q}. \end{align*}
\end{proof}

\begin{proof}[Proof of Theorem \ref{tt2}]  We apply again  Proposition \ref{kkey}, this time for the family with a unique element $\{ \Omega\}$, with $\Omega=\cup_r I_r$,  $g=f$ and taking  $T=1$. We conclude that   there exists a  $\mathcal{B}^s_{p,q}$-representation
$$f \cdot 1_{\Omega}=\sum_k \sum_{\substack{ Q\in \mathcal{P}^k}}c_Q a_Q$$
satisfying 
$$\Big(  \sum_k  \big(\sum_{\substack{ Q\in \mathcal{P}^k}}  |c_Q|^p  \big)^{q/p} \Big)^{1/q}\leq  \Crr{GSR}  \Crr{rp36}^{1/p}   \Big(  \sum_k \big( \sum_{Q\in \mathcal{P}^k}  |d_Q|^p \big)^{q/p} \Big)^{1/q}. $$
 if $c_Q\neq 0$ then $Q\subset \Omega$, so by assumption $Q\subset I_r$, for some $r\in \Lambda$. Such $r$  must be unique, since the sets in the family $\{I_r\}$ are pairwise disjoint.  So if $Q\subset I_r$ define $c_Q^r=c_Q.$  Then
 $$f \cdot 1_{I_r}=\sum_k \sum_{\substack{ Q\in \mathcal{P}^k   \\Q\subset I_r}} c^r_Q a_Q$$
 for every $r$ and
\begin{align*} &N^{1/p'} \Big(  \sum_k  \big(   \sum_r   \Theta_r^p  \sum_{\substack{ Q\in \mathcal{P}^k   \\Q\subset I_r}}  |c_Q^r|^p \big)^{q/p}\Big)^{1/q}\\
&\leq N^{1/p'} \big( \sup_r \Theta_r \big) \Big(  \sum_k  \big( \sum_r   \sum_{\substack{ Q\in \mathcal{P}^k   \\Q\subset I_r}}  |c_Q^r|^p \big)^{q/p} \Big)^{1/q}\\
&\leq  N^{1/p'} \big( \sup_r \Theta_r \big) \Crr{GSR}  \Crr{rp36}^{1/p}   \big( \sum_r \Theta_r\big) \Big(  \sum_k \big( \sum_{Q\in \mathcal{P}^k}  |d_Q|^p \big)^{q/p} \Big)^{1/q}. \end{align*}
\end{proof}

\section{Boundness on Lebesgue spaces} We have that  $\Phi$, under very mild conditions,  defines a  bounded transformation from  $L^{t_0}(m)$ to $L^{1}(\mu)$.
 \label{boundlp}

\begin{corollary} [Boundeness on Lebesgue spaces] \label{b22} Let $\epsilon'$ be such that $\delta=\epsilon -\epsilon'$ and $\epsilon, \epsilon'> 0$.
Suppose that 
$$\Lambda=\Lambda_1\cup\Lambda_2\cup\Lambda_3,$$
with $\Lambda_i\cap \Lambda_k=\emptyset$, for $i\neq k$,  such that
\begin{itemize}
\item[i.] If $i,j \in \Lambda_1$, with $i\neq j$, then  $I_i\cap I_j =\emptyset$. Moreover  there is $\Cll{113}\geq 0$ such that for every $i\in \Lambda_1$  and $Q\in \mathcal{P}$ such that $Q \subset I_i$ 
\begin{equation}\label{supe52} \sup(|g|,h_i^{-1} Q) \leq  \Crr{113} \frac{|Q|}{|h_i^{-1}Q|}\end{equation} 
\item[ii.] We have \begin{equation}\label{supe22} \sup(|g|,h_r^{-1} Q) \leq  \Cll{11} \Big( \frac{|Q|}{|h_r^{-1}Q|}\Big)^{1/p-s+\epsilon}\end{equation} 
for every $Q\in \mathcal{P}$ such that $Q \subset I_r$, with $r\in \Lambda_2\cup \Lambda_3$, 
\item[iii.] We have 
\begin{equation} \sum_{r\in \Lambda_2}    \Crr{DC2}^{|\epsilon'|  a_r} < \infty.\end{equation}
\item[iv.] If $i,j \in \Lambda_3$, with $i\neq j$, then  $I_i\cap I_j =\emptyset$. Moreover
$$\sum_{r\in \Lambda_3} \Crr{DC2}^{ t_0' a_r |\epsilon'|} <\infty,$$
where $t_0'$ satisfies $1/t_0'+ 1/t_0=1$. 
\end{itemize}
For every $f\in L^{1}$ and $r\in \Lambda$ consider  the measurable functions $\Phi_r(f)\colon I \rightarrow \mathbb{C}$ defined by 
$$\Phi_r(f)= g_r(x)f(h_r(x))$$
if $x \in J_r$ and $\Phi_r(f)(x)=0$ otherwise. Then

\begin{itemize}
\item[A.] For every $r\in \Lambda_1$ and $f\in L^{1}(m)$ we have that  $\Phi_r(f)$ belongs to $L^{1}(\mu)$,  
$$\Phi_r\colon L^{1}(m)\rightarrow L^{1}(\mu)$$
is a bounded linear transformation and 
$$ |\Phi_r(f)|_{1} \leq \Crr{113}  |f 1_{I_r}|_{1}.$$
In particular 
$$ \sum_{r\in \Lambda_1} |\Phi_r(f)|_{1} \leq \Crr{113} |f|_1$$
\item[B.] For every $r\in \Lambda_2\cup\Lambda_3$ and $f\in L^{t_0}(m)$ we have that  $\Phi_r(f)$ belongs to $L^{t_0}(\mu)$,  and 
$$\Phi_r\colon L^{t_0}(m)\rightarrow L^{t_0}(\mu)$$
is a bounded linear transformation and 
$$ |\Phi_r(f)|_{t_0} \leq  \Crr{11}  \Crr{DC1}^{|\epsilon'|}\Crr{DC2}^{ a_r |\epsilon'|} |f 1_{I_r}|_{t_0}.$$
In particular
$$ \sum_{r\in \Lambda_2} |\Phi_r(f)|_{1} \leq  \Crr{11} \Crr{DC1}^{|\epsilon'|} \big( \sum_{r\in \Lambda_2}\Crr{DC2}^{ a_r |\epsilon'|} \big) |f|_{t_0}$$
and
$$ \sum_{r\in \Lambda_3} |\Phi_r(f)|_{1} \leq \Crr{11} \Crr{DC1}^{|\epsilon'|} \big( \sum_{r\in \Lambda_3} \Crr{DC2}^{ t_0' a_r |\epsilon'|}\big)^{1/t_0'} |f|_{t_0}.$$
\item[C.] In particular  $\Phi f $ is well defined and belongs to $L^1(\mu)$ for every $f\in L^{t_0}(m)$ and
$$\Phi\colon L^{t_0}(m)\rightarrow L^1(\mu)$$
is a continuous linear transformation, with 
$$|\Phi f|_{1}\leq \sum_r |\Phi_r(f)|_{1}\leq \Crr{999}|f|_{t_0}$$
for some $\Cll{999}\geq0$.
\end{itemize}
\end{corollary}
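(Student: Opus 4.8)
The plan is to estimate each $\Phi_r$ separately, reduce the whole statement to one pointwise inequality, and then add the three families $\Lambda_1,\Lambda_2,\Lambda_3$ by three different summation arguments. Fix $r\in\Lambda$. Since $h_r\colon J_r\to I_r$ is a bijection with measurable inverse, let $\nu_r:=(h_r)_\star(\mu|_{J_r})$, a finite measure carried by $I_r$; by item~(I) of the definition of a branch (Section~\ref{contra}) we have $\nu_r\ll m$, so $\nu_r=\psi_r\,m$ with $\psi_r\in L^1(m)$, $\psi_r\ge 0$ and $\psi_r=0$ off $I_r$. Writing $\tilde g_r:=g_r\circ h_r^{-1}$ on $I_r$, so that $g_r=\tilde g_r\circ h_r$ on $J_r$, the change of variables $y=h_r(x)$ gives for every $\alpha\ge 1$ and $f\in L^\alpha(m)$
\[
|\Phi_r(f)|_{L^\alpha(\mu)}^{\alpha}=\int_{J_r}|g_r|^\alpha\,|f\circ h_r|^\alpha\,d\mu=\int_{I_r}|\tilde g_r|^{\alpha}\,|f|^{\alpha}\,\psi_r\,dm\le\big\|\,|\tilde g_r|^{\alpha}\psi_r\,\big\|_{L^\infty(m)}\;|f\,1_{I_r}|_{\alpha}^{\alpha},
\]
so the whole corollary reduces to an $L^\infty$ bound for $|\tilde g_r|^{\alpha}\psi_r$ (with $\alpha=1$ for $r\in\Lambda_1$ and $\alpha=t_0$ for $r\in\Lambda_2\cup\Lambda_3$) plus bookkeeping.

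The heart of the matter, and the step I expect to be the main obstacle, is this pointwise bound. For $P\in\mathcal P^k$ we have $|\tilde g_r|\le\sup(|g_r|,h_r^{-1}(P))$ $m$-a.e.\ on $P$; and since $I_r$ is, up to a null set, a countable union of elements of $\mathcal P$ (item~(IV)) while $\cup_k\mathcal P^k$ generates $\mathbb A$ (G7) with $\sigma(\mathcal P^k)$ increasing (G5), the martingale convergence theorem applied to $\psi_r\in L^1(m)$ gives, for $m$-a.e.\ $y\in I_r$,
\[
\psi_r(y)=\lim_{k\to\infty}\frac{\nu_r(Q_k(y))}{m(Q_k(y))}=\lim_{k\to\infty}\frac{|h_r^{-1}(Q_k(y))|}{|Q_k(y)|},
\]
where $Q_k(y)\in\mathcal P^k$ is the element containing $y$, eventually contained in $I_r$. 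For $r\in\Lambda_1$, multiplying $|\tilde g_r(y)|\le\Crr{113}\,|Q_k(y)|/|h_r^{-1}(Q_k(y))|$ from (i) by the $k$-th term above and letting $k\to\infty$ gives $|\tilde g_r(y)|\psi_r(y)\le\Crr{113}$, which together with the reduction is exactly A. For $r\in\Lambda_2\cup\Lambda_3$ I would first record the exponent identity $\tfrac1p-s+\epsilon=\tfrac1{t_0}+\epsilon'$, immediate from $\delta=\epsilon-\epsilon'$ and the definition \reff{t0} of $t_0$; then (ii) yields $|\tilde g_r(y)|^{t_0}\le\Crr{11}^{t_0}\big(|Q_k(y)|/|h_r^{-1}(Q_k(y))|\big)^{1+t_0\epsilon'}$, hence
\[
|\tilde g_r(y)|^{t_0}\psi_r(y)=\lim_k|\tilde g_r(y)|^{t_0}\frac{|h_r^{-1}(Q_k(y))|}{|Q_k(y)|}\le\Crr{11}^{t_0}\,\limsup_k\Big(\frac{|Q_k(y)|}{|h_r^{-1}(Q_k(y))|}\Big)^{t_0\epsilon'}.
\]
The Scaling Control (item~(III)) bounds $|Q|/|h_r^{-1}(Q)|$ by $\Crr{DC1}^r(\Crr{DC2}^r)^{|k_0(Q)-k_0(h_r^{-1}(Q))|}$ with exponent at least $a_r$; since $\epsilon'>0$ and $\Crr{DC2}^r<1$, after raising to $t_0\epsilon'$ this factor is $\le(\Crr{DC2}^r)^{t_0\epsilon' a_r}$, so $\||\tilde g_r|^{t_0}\psi_r\|_\infty\le\Crr{11}^{t_0}(\Crr{DC1}^r)^{t_0\epsilon'}(\Crr{DC2}^r)^{t_0\epsilon' a_r}$; taking $t_0$-th roots gives B (recall $|\epsilon'|=\epsilon'$, and the $r$ in $\Crr{DC1}^r,\Crr{DC2}^r$ is just an index). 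The delicate point is precisely to make the essential-sup control on the potential, the Scaling Control, and the martingale representation of the Radon--Nikodym density $\psi_r$ cooperate, arranging the exponents so that $|Q|/|h_r^{-1}(Q)|$ appears with the single positive power $t_0\epsilon'$ on which the contraction $(\Crr{DC2}^r)^{a_r}$ can act.

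It remains to sum over $r$ and deduce C. For $\Lambda_1$ the sets $I_r$ are pairwise disjoint, so $\sum_{r\in\Lambda_1}|\Phi_r(f)|_1\le\Crr{113}\sum_{r\in\Lambda_1}|f\,1_{I_r}|_1=\Crr{113}|f\,1_{\cup_{\Lambda_1}I_r}|_1\le\Crr{113}|f|_1$. For $r\in\Lambda_2\cup\Lambda_3$, Hölder's inequality together with $\mu(J)=1$ gives $|\Phi_r(f)|_1\le|\Phi_r(f)|_{t_0}$; then for $\Lambda_2$, bounding $|f\,1_{I_r}|_{t_0}\le|f|_{t_0}$ and summing the convergent series in (iii) gives $\sum_{r\in\Lambda_2}|\Phi_r(f)|_1\le\Crr{11}\,\Crr{DC1}^{|\epsilon'|}\big(\sum_{r\in\Lambda_2}\Crr{DC2}^{a_r|\epsilon'|}\big)|f|_{t_0}$, and for $\Lambda_3$ the sets $I_r$ are again pairwise disjoint, so $\sum_r|f\,1_{I_r}|_{t_0}^{t_0}\le|f|_{t_0}^{t_0}$ and Hölder in the $r$-sum with exponents $t_0',t_0$ combined with (iv) gives $\sum_{r\in\Lambda_3}|\Phi_r(f)|_1\le\Crr{11}\,\Crr{DC1}^{|\epsilon'|}\big(\sum_{r\in\Lambda_3}\Crr{DC2}^{t_0'a_r|\epsilon'|}\big)^{1/t_0'}|f|_{t_0}$. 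Adding the three estimates and using $|f|_1\le|f|_{t_0}$ (since $m(I)=1$) produces $\sum_r|\Phi_r(f)|_1\le\Crr{999}|f|_{t_0}$ for a suitable $\Crr{999}$; in particular $\sum_r\Phi_r(f)$ converges absolutely in $L^1(\mu)$, so $\Phi(f):=\sum_r\Phi_r(f)$ is a well-defined element of $L^1(\mu)$ and $\Phi\colon L^{t_0}(m)\to L^1(\mu)$ is bounded and linear, which is C.
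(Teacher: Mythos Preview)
Your proof is correct and reaches the same conclusions, but the route is genuinely different from the paper's. The paper argues on a dense class: it takes $f$ to be a finite linear combination of indicators $1_{Q_i}$ with $Q_i\subset I_r$ pairwise disjoint, computes $\Phi_r(f)=\sum_i c_i g_r 1_{h_r^{-1}Q_i}$ explicitly, bounds each term using the sup bound on $g_r$ and the Scaling Control, and then extends by density in $L^{t_0}$ (resp.\ $L^1$). Your approach instead performs a change of variables once and for all, identifying the Radon--Nikodym density $\psi_r$ of $(h_r)_\star(\mu|_{J_r})$ with respect to $m$, so that the whole estimate becomes an $L^\infty$ bound on the single weight $|\tilde g_r|^{\alpha}\psi_r$; you then realise $\psi_r$ as the martingale limit $\lim_k |h_r^{-1}(Q_k(y))|/|Q_k(y)|$ and combine it with the pointwise bound on $\tilde g_r$ along the same sequence. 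The exponent bookkeeping (the identity $1/p-s+\epsilon=1/t_0+\epsilon'$) and the final summations over $\Lambda_1,\Lambda_2,\Lambda_3$ are identical in both arguments. What your approach buys is that it works directly for every $f$ without the density step, and it makes transparent that the operator norm of $\Phi_r$ on $L^\alpha$ is governed by $\||\tilde g_r|^\alpha\psi_r\|_\infty$; what the paper's approach buys is that it avoids invoking martingale convergence and Radon--Nikodym, relying only on elementary manipulations of simple functions.
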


\begin{proof} Recall
$$\mathcal{A}= \{Q \in \mathcal{P}, \  Q\subset I_r, \text{ for some $r\in \Lambda$}\}\cup \{Q \in \mathcal{P}, \  Q\cap I_r=\emptyset, \text{ for every $r\in \Lambda$}\}.$$
If $f$ is a (finite)  linear combinations of  characteristic functions of sets in $\mathcal{A}$  we can write 
$$f1_{I_r} =\sum_{i\leq n} c_i 1_{Q_i},$$
with $Q_i \subset I_r$. We can assume that $\{Q_i\}_i$ is a family of pairwise disjoint sets. In particular
$$|f1_{I_r}|_{t_0}^{t_0} =\sum_{i\leq n} |c_i|^{t_0}  |Q_i|,$$
and
$$\Phi_r(f)=  \sum_{i\leq n} c_i g 1_{h_r^{-1}Q_i},$$
Then for every $r\in \Lambda_2\cup\Lambda_3$
\begin{align*} 
|\sum_{i\leq n} c_i g 1_{h_r^{-1}Q_i}|_{t_0}^{t_0} &= \int   \Big|\sum_{i\leq n}  c_i  g 1_{h_r^{-1}Q_i}\Big|^{t_0} \ dm \\
 &= \int  \sum_{i\leq n} |c_i|^{t_0}   |g|^{t_0} 1_{h_r^{-1}Q_i} \ dm \\
&\leq  \Crr{11}^{t_0} \sum_{i\leq n}  |c_i|^{t_0} \Big( \frac{|Q_i|}{|h^{-1}_rQ_i|}\Big)^{(1/p-s+\epsilon)t_0} |h_r^{-1}Q_i| \\
&\leq  \Crr{11}^{t_0} \sum_{i\leq n}  |c_i|^{t_0} \Big( \frac{|Q_i|}{|h^{-1}_rQ_i|}\Big)^{ 1+t_0\epsilon'} |h_r^{-1}Q_i| \\
&\leq  \Crr{11}^{t_0} \sum_{i\leq n}  |c_i|^{t_0} \Big( \frac{|Q_i|}{|h^{-1}_rQ_i|}\Big)^{t_0\epsilon'} |Q_i| \\
&\leq \Crr{11}^{t_0}  \Crr{DC1}^{|\epsilon'|t_0}  \Crr{DC2}^{ a_r |\epsilon'| t_0}  \sum_{i\leq n}  |c_i|^{t_0}  |Q_i| \\
&\leq \Crr{11}^{t_0}  \Crr{DC1}^{|\epsilon'|t_0}   \Crr{DC2}^{ a_r |\epsilon' |t_0}  |f1_{I_r}|_{t_0}^{t_0}.
\end{align*}
Since linear combinations of  characteristic functions of sets in $\mathcal{A}$  are dense in $L^{t_0}$. we conclude that  for {\it every} $f\in L^{t_0}$ and $r\in \Lambda_2\cup\Lambda_3$ we have
$$|\Phi_r(f)|_{t_0}\leq   \Crr{11}   \Crr{DC1}^{|\epsilon'|} \Crr{DC2}^{ a_r |\epsilon'| }  |f1_{I_r}|_{t_0}.$$
So
\begin{eqnarray*}
\sum_{r\in \Lambda_2} |\Phi_r(f)|_{1}\leq \sum_{r\in \Lambda_2} |\Phi_r(f)|_{t_0}&\leq& \Crr{11}  \Crr{DC1}^{|\epsilon'|} \sum_{r\in \Lambda_2} \Crr{DC2}^{ a_r|\epsilon'| }  |f 1_{I_r}|_{t_0}\\
&\leq& \Crr{11}  \Crr{DC1}^{|\epsilon'|}  \big( \sum_{r\in \Lambda_2} \Crr{DC2}^{ a_r |\epsilon'| } \big) |f|_{t_0},
\end{eqnarray*}
and 
\begin{eqnarray*}
\sum_{r\in \Lambda_3} |\Phi_r(f)|_{1}\leq  \sum_{r\in \Lambda_3} |\Phi_r(f)|_{t_0}&\leq& \Crr{11} \Crr{DC1}^{|\epsilon'|}  \sum_{r\in \Lambda_3} \Crr{DC2}^{ a_r |\epsilon'| }  |f 1_{I_r}|_{t_0}\\
&\leq& \Crr{11}  \Crr{DC1}^{|\epsilon'|} \big( \sum_{r\in \Lambda_3} \Crr{DC2}^{ t_0' a_r |\epsilon'|}\big)^{1/t_0'}  \big( \sum_{r\in \Lambda_3} |f 1_{I_r}|_{t_0}^{t_0}\big)^{1/t_0}\\
&\leq& \Crr{11} \Crr{DC1}^{|\epsilon'|} \big( \sum_{r\in \Lambda_3} \Crr{DC2}^{ t_0' a_r |\epsilon'|}\big)^{1/t_0'}  \big(  \sum_{r\in \Lambda_3}  |f1_{I_r}|_{t_0}^{t_0}\big)^{1/t_0}\\
&\leq& \Crr{11} \Crr{DC1}^{|\epsilon'|} \big( \sum_{r\in \Lambda_3} \Crr{DC2}^{ t_0' a_r |\epsilon'|}\big)^{1/t_0'}  |f|_{t_0}.
\end{eqnarray*}
If   $r\in \Lambda_1$ we have
\begin{align*} 
|\sum_{i\leq n} c_i g 1_{h_r^{-1}Q_i}|_{1} &= \int  \Big| \sum_{i\leq n}  c_i  g 1_{h_r^{-1}Q_i} \Big| \ dm \\
 &= \int  \sum_{i\leq n} |c_i|   |g| 1_{h_r^{-1}Q_i}  \ dm \\
&\leq  \Crr{113} \sum_{i\leq n}  |c_i| \frac{|Q_i|}{|h^{-1}_rQ_i|}   |h_r^{-1}Q_i| \\
&\leq \Crr{113}    \sum_{i\leq n}  |c_i|  |Q_i| \\
&\leq \Crr{113}   |f 1_r|_1.
\end{align*}
Since linear combinations $f$ of  characteristic functions of sets in $\mathcal{G}$  are dense in $L^{1}$. we conclude that  for every $f\in L^{1}$ and $r\in \Lambda_1$
$$|\Phi_r(f)|_1\leq \Crr{113}   |f 1_r|_1,$$
so
$$\sum_{r\in \Lambda_1} |\Phi_r(f)|_{1}\leq  \Crr{113} \sum_{r\in \Lambda_1}  |f 1_r|_1\leq  \Crr{113}  |f|_1\leq \Crr{113}  |f|_{t_0}.$$
\end{proof}

\bibliographystyle{abbrv}
\bibliography{bibliografia}

\def\cprime{$'$}
\begin{thebibliography}{10}

\bibitem{ad}
K.~Adl-Zarabi.
\newblock Absolutely continuous invariant measures for piecewise expanding
  {$C^2$} transformations in {${\bf R}^n$} on domains with cusps on the
  boundaries.
\newblock {\em Ergodic Theory Dynam. Systems}, 16(1):1--18, 1996.

\bibitem{baladi1}
V.~Baladi.
\newblock {\em Positive transfer operators and decay of correlations},
  volume~16 of {\em Advanced Series in Nonlinear Dynamics}.
\newblock World Scientific Publishing Co., Inc., River Edge, NJ, 2000.

\bibitem{bsur}
V.~Baladi.
\newblock The quest for the ultimate anisotropic {B}anach space.
\newblock {\em J. Stat. Phys.}, 166(3-4):525--557, 2017.

\bibitem{baladi2}
V.~Baladi.
\newblock {\em Dynamical zeta functions and dynamical determinants for
  hyperbolic maps}, volume~68 of {\em Ergebnisse der Mathematik und ihrer
  Grenzgebiete. 3. Folge. A Series of Modern Surveys in Mathematics [Results in
  Mathematics and Related Areas. 3rd Series. A Series of Modern Surveys in
  Mathematics]}.
\newblock Springer, Cham, 2018.
\newblock A functional approach.

\bibitem{bh}
V.~Baladi and M.~Holschneider.
\newblock Approximation of nonessential spectrum of transfer operators.
\newblock {\em Nonlinearity}, 12(3):525--538, 1999.

\bibitem{bt}
V.~Baladi and M.~Tsujii.
\newblock Anisotropic {H}\"{o}lder and {S}obolev spaces for hyperbolic
  diffeomorphisms.
\newblock {\em Ann. Inst. Fourier (Grenoble)}, 57(1):127--154, 2007.

\bibitem{bkl}
M.~Blank, G.~Keller, and C.~Liverani.
\newblock Ruelle-{P}erron-{F}robenius spectrum for {A}nosov maps.
\newblock {\em Nonlinearity}, 15(6):1905--1973, 2002.

\bibitem{bowen}
R.~Bowen.
\newblock {\em Equilibrium states and the ergodic theory of {A}nosov
  diffeomorphisms}.
\newblock Lecture Notes in Mathematics, Vol. 470. Springer-Verlag, Berlin-New
  York, 1975.

\bibitem{qs}
R.~Bowen.
\newblock Hausdorff dimension of quasicircles.
\newblock {\em Inst. Hautes \'Etudes Sci. Publ. Math.}, (50):11--25, 1979.

\bibitem{series}
R.~Bowen and C.~Series.
\newblock Markov maps associated with {F}uchsian groups.
\newblock {\em Inst. Hautes \'Etudes Sci. Publ. Math.}, (50):153--170, 1979.

\bibitem{broise}
A.~Broise.
\newblock Transformations dilatantes de l'intervalle {$[0,1]$} et
  th\'{e}or\`emes limites.
\newblock In {\em Fascicule de probabilit\'{e}s}, volume 1992 of {\em Publ.
  Inst. Rech. Math. Rennes}, page~42. Univ. Rennes I, Rennes, 1992.

\bibitem{butter}
O.~Butterley.
\newblock An alternative approach to generalised {BV} and the application to
  expanding interval maps.
\newblock {\em Discrete Contin. Dyn. Syst.}, 33(8):3355--3363, 2013.

\bibitem{buzzi2}
J.~Buzzi.
\newblock Intrinsic ergodicity of affine maps in {$[0,1]^d$}.
\newblock {\em Monatsh. Math.}, 124(2):97--118, 1997.

\bibitem{MR1764923}
J.~Buzzi.
\newblock Absolutely continuous invariant probability measures for arbitrary
  expanding piecewise {$\bold R$}-analytic mappings of the plane.
\newblock {\em Ergodic Theory Dynam. Systems}, 20(3):697--708, 2000.

\bibitem{buzzic}
J.~Buzzi.
\newblock No or infinitely many a.c.i.p.\ for piecewise expanding {$C^r$} maps
  in higher dimensions.
\newblock {\em Comm. Math. Phys.}, 222(3):495--501, 2001.

\bibitem{affineb}
J.~Buzzi.
\newblock Thermodynamical formalism for piecewise invertible maps: absolutely
  continuous invariant measures as equilibrium states.
\newblock In {\em Smooth ergodic theory and its applications ({S}eattle, {WA},
  1999)}, volume~69 of {\em Proc. Sympos. Pure Math.}, pages 749--783. Amer.
  Math. Soc., Providence, RI, 2001.

\bibitem{dolgo2}
N.~Chernov and D.~Dolgopyat.
\newblock Brownian {B}rownian motion. {I}.
\newblock {\em Mem. Amer. Math. Soc.}, 198(927):viii+193, 2009.

\bibitem{cowieson1}
W.~J. Cowieson.
\newblock Stochastic stability for piecewise expanding maps in {${\bf R}^d$}.
\newblock {\em Nonlinearity}, 13(5):1745--1760, 2000.

\bibitem{cowieson}
W.~J. Cowieson.
\newblock Absolutely continuous invariant measures for most piecewise smooth
  expanding maps.
\newblock {\em Ergodic Theory Dynam. Systems}, 22(4):1061--1078, 2002.

\bibitem{craizer}
M.~Craizer.
\newblock {\em Teoria Erg\'odica das Transforma\c{c}\~oes Expansoras}.
\newblock Informes de Matem\'atica S\'erie E-018, Agosto/85. IMPA, 1985.

\bibitem{dolgo1}
D.~Dolgopyat.
\newblock Limit theorems for partially hyperbolic systems.
\newblock {\em Trans. Amer. Math. Soc.}, 356(4):1637--1689, 2004.

\bibitem{eslami}
P.~Eslami.
\newblock On piecewise expanding maps. arxiv:1711.09245, 2017.

\bibitem{gelfond}
A.~O. Gel\cprime~fond.
\newblock A common property of number systems.
\newblock {\em Izv. Akad. Nauk SSSR. Ser. Mat.}, 23:809--814, 1959.

\bibitem{MR1029902}
P.~G\'ora and A.~Boyarsky.
\newblock Absolutely continuous invariant measures for piecewise expanding
  {$C^2$} transformation in {${\bf R}^N$}.
\newblock {\em Israel J. Math.}, 67(3):272--286, 1989.

\bibitem{gou1}
S.~Gou\"ezel.
\newblock Almost sure invariance principle for dynamical systems by spectral
  methods.
\newblock {\em Ann. Probab.}, 38(4):1639--1671, 2010.

\bibitem{gl}
S.~Gou\"{e}zel and C.~Liverani.
\newblock Banach spaces adapted to {A}nosov systems.
\newblock {\em Ergodic Theory Dynam. Systems}, 26(1):189--217, 2006.

\bibitem{hk2}
F.~Hofbauer and G.~Keller.
\newblock Equilibrium states for piecewise monotonic transformations.
\newblock {\em Ergodic Theory Dynam. Systems}, 2(1):23--43, 1982.

\bibitem{hk1}
F.~Hofbauer and G.~Keller.
\newblock Ergodic properties of invariant measures for piecewise monotonic
  transformations.
\newblock {\em Math. Z.}, 180(1):119--140, 1982.

\bibitem{kt}
G.~Keller.
\newblock Ergodicit\'{e} et mesures invariantes pour les transformations
  dilatantes par morceaux d'une r\'{e}gion born\'{e}e du plan.
\newblock {\em C. R. Acad. Sci. Paris S\'{e}r. A-B}, 289(12):A625--A627, 1979.

\bibitem{keller}
G.~Keller.
\newblock Generalized bounded variation and applications to piecewise monotonic
  transformations.
\newblock {\em Z. Wahrsch. Verw. Gebiete}, 69(3):461--478, 1985.

\bibitem{lasota}
A.~Lasota.
\newblock Invariant measures and functional equations.
\newblock {\em Aequationes Math.}, 9:193--200, 1973.

\bibitem{ly}
A.~Lasota and J.~A. Yorke.
\newblock On the existence of invariant measures for piecewise monotonic
  transformations.
\newblock {\em Trans. Amer. Math. Soc.}, 186:481--488 (1974), 1973.

\bibitem{liverani}
C.~Liverani.
\newblock A footnote on expanding maps.
\newblock {\em Discrete Contin. Dyn. Syst.}, 33(8):3741--3751, 2013.

\bibitem{ns}
Y.~Nakano and S.~Sakamoto.
\newblock Spectra of expanding maps on besov spaces.
\newblock {\em Discrete and Continuous Dynamical Systems}, 39(4):1779--1797,
  2019.

\bibitem{parry2}
W.~Parry.
\newblock On the {$\beta $}-expansions of real numbers.
\newblock {\em Acta Math. Acad. Sci. Hungar.}, 11:401--416, 1960.

\bibitem{parry}
W.~Parry.
\newblock Symbolic dynamics and transformations of the unit interval.
\newblock {\em Trans. Amer. Math. Soc.}, 122:368--378, 1966.

\bibitem{pp}
W.~Parry and M.~Pollicott.
\newblock Zeta functions and the periodic orbit structure of hyperbolic
  dynamics.
\newblock {\em Ast\'{e}risque}, (187-188):268, 1990.

\bibitem{pu}
F.~Przytycki and M.~Urba\'{n}ski.
\newblock {\em Conformal fractals: ergodic theory methods}, volume 371 of {\em
  London Mathematical Society Lecture Note Series}.
\newblock Cambridge University Press, Cambridge, 2010.

\bibitem{rechard}
O.~W. Rechard.
\newblock Invariant measures for many-one transformations.
\newblock {\em Duke Math. J.}, 23:477--488, 1956.

\bibitem{ruelle1d}
D.~Ruelle.
\newblock Statistical mechanics of a one-dimensional lattice gas.
\newblock {\em Comm. Math. Phys.}, 9:267--278, 1968.

\bibitem{rjulia}
D.~Ruelle.
\newblock Repellers for real analytic maps.
\newblock {\em Ergodic Theory Dynamical Systems}, 2(1):99--107, 1982.

\bibitem{tf}
D.~Ruelle.
\newblock The thermodynamic formalism for expanding maps.
\newblock {\em Comm. Math. Phys.}, 125(2):239--262, 1989.

\bibitem{saussol}
B.~Saussol.
\newblock Absolutely continuous invariant measures for multidimensional
  expanding maps.
\newblock {\em Israel J. Math.}, 116:223--248, 2000.

\bibitem{markovp}
J.~G. Sina\u\i.
\newblock Markov partitions and y-diffeomorphisms.
\newblock {\em Funkcional. Anal. i Prilo\v zen}, 2(1):64--89, 1968.

\bibitem{smania-besov}
D.~Smania.
\newblock Besov-ish spaces through atomic decomposition, 2019,
  \url{https://arxiv.org/abs/1903.06937}.

\bibitem{smania-examples}
D.~Smania.
\newblock Transfer operators, atomic decomposition and the bestiary, 2019,
  \url{https://arxiv.org/abs/1903.06976}.

\bibitem{smania-homo}
D.~Smania.
\newblock Classic and exotic besov spaces induced by good grids.
\newblock {\em The Journal of Geometric Analysis.}, Feb. 2020,
  \url{https://doi.org/10.1007/s12220-020-00361-x}.

\bibitem{thomine}
D.~Thomine.
\newblock A spectral gap for transfer operators of piecewise expanding maps.
\newblock {\em Discrete Contin. Dyn. Syst.}, 30(3):917--944, 2011.

\bibitem{tbv}
M.~Tsujii.
\newblock Absolutely continuous invariant measures for piecewise real-analytic
  expanding maps on the plane.
\newblock {\em Comm. Math. Phys.}, 208(3):605--622, 2000.

\bibitem{tsujiic}
M.~Tsujii.
\newblock Piecewise expanding maps on the plane with singular ergodic
  properties.
\newblock {\em Ergodic Theory Dynam. Systems}, 20(6):1851--1857, 2000.

\bibitem{tad}
M.~Tsujii.
\newblock Absolutely continuous invariant measures for expanding piecewise
  linear maps.
\newblock {\em Invent. Math.}, 143(2):349--373, 2001.

\bibitem{ulam1}
S.~M. Ulam.
\newblock {\em Problems in modern mathematics}.
\newblock Science Editions John Wiley \& Sons, Inc., New York, 1964.

\bibitem{waltersgm}
P.~Walters.
\newblock Ruelle's operator theorem and {$g$}-measures.
\newblock {\em Trans. Amer. Math. Soc.}, 214:375--387, 1975.

\bibitem{zin}
M.~Zinsmeister.
\newblock {\em Thermodynamic formalism and holomorphic dynamical systems},
  volume~2 of {\em SMF/AMS Texts and Monographs}.
\newblock American Mathematical Society, Providence, RI; Soci\'et\'e
  Math\'ematique de France, Paris, 2000.
\newblock Translated from the 1996 French original by C. Greg Anderson.

\end{thebibliography}

\end{document}